\newtheorem{theorem}{Theorem}[section]
\newtheorem{lem}[theorem]{Lemma}
\newtheorem{prop}[theorem]{Proposition}
\newtheorem{coro}[theorem]{Corollary}
\theoremstyle{definition}
\newtheorem{rem}[theorem]{Remark}
\newcommand{\A}{\mathcal{A}}
\newcommand{\B}{\mathcal{B}}
\newcommand{\Z}{\mathcal{Z}}
\newcommand{\N}{\mathcal{N}}
\renewcommand{\l}{\lambda}
\renewcommand{\L}{\mathit{\Lambda}}
\newcommand{\F}{\Phi}
\newcommand{\sneq}{\subsetneqq}
\newcommand{\f}{\varphi}
\newcommand{\I}{\mathcal{I}}
\renewcommand{\i}{I}
\newcommand{\s}{\sigma}
\newcommand{\m}{\mu}
\renewcommand{\a}{\alpha}
\renewcommand{\b}{\beta}
\newcommand{\alg}{\mathrm{Alg}\,}
\newcommand{\ol}{\overline}
\newcommand{\ot}{\otimes}
\newcommand{\M}{\mathcal{M}}
\newcommand{\Nn}{\mathbb{N}}
\newcommand{\wt}{\widetilde}
\newcommand{\seq}{\subseteq}
\newcommand{\cdim}{\mathrm{codim}\,}
\newcommand{\lspan}{\mathrm{span}\,}
\newcommand{\Xb}{\mathcal{X}^\bot_-}
\newcommand{\0}{\mathbf 0}
\newcommand{\p}{\psi}
\newcommand{\Xm}{\mathcal{X}_-}
\newcommand{\sm}{\setminus}
\begin{document}

\title[Lie-Type Derivations of Nest Algebras on Banach Spaces]
{Lie-Type Derivations of Nest Algebras on Banach Spaces}

\author{Yuhao Zhang and Feng Wei}

\address{Zhang: School of Mathematics and Statistics, Beijing Institute of
Technology, Beijing, 100081, P. R. China}

\email{$2008_{-}{\rm joseph}$@sina.com}

\address{Wei: School of Mathematics and Statistics, Beijing Institute of
Technology, Beijing, 100081, P. R. China}

\email{daoshuo@hotmail.com}\email{daoshuowei@gmail.com}

\begin{abstract}
Let $\mathcal{X}$ be a Banach space over the complex field $\mathbb{C}$ and $\mathcal{B(X)}$ be the algebra of all bounded linear operators on $\mathcal{X}$. Let $\mathcal{N}$ be a non-trivial nest on $\mathcal{X}$, ${\rm Alg}\mathcal{N}$ be the nest algebra associated with $\mathcal{N}$, and $L\colon {\rm Alg}\mathcal{N}\longrightarrow \mathcal{B(X)}$ be a linear mapping. Suppose that $p_n(x_1,x_2,\cdots,x_n)$ is an $(n-1)$-th commutator
defined by $n$ indeterminates $x_1, x_2, \cdots, x_n$. It is shown that $L$ satisfies the rule 
$$
L(p_n(A_1, A_2, \cdots, A_n))=\sum_{k=1}^{n}p_n(A_1, \cdots, A_{k-1}, L(A_k), A_{k+1}, \cdots, A_n)
$$
for all $A_1, A_2, \cdots, A_n\in {\rm Alg}\mathcal{N}$ if and only if there exist a linear derivation
$D\colon {\rm Alg}\mathcal{N}\longrightarrow \mathcal{B(X)}$ and a linear mapping $H\colon {\rm Alg}\mathcal{N}\longrightarrow \mathbb{C}I$ vanishing
on each $(n-1)$-th commutator $p_n(A_1,A_2,\cdots, A_n)$ for all $A_1, A_2, \cdots, A_n\in {\rm Alg}\mathcal{N}$ such that $L(A)=D(A)+H(A)$ for all $A\in {\rm Alg}\mathcal{N}$.
\end{abstract}

\subjclass[2000]{47L35, 47B47, 17B40}

\keywords{Lie-type derivation, nest algebra, rank one operator}


\date{\today}

\maketitle

\section{Introduction}
\label{xxsec1}

Let $\mathcal{A}$ be an associative algebra over the complex field $\mathbb{C}$, $\mathcal{M}$ be an $(\mathcal{A}, \mathcal{A})$-bimodule and   
$\mathcal{Z}_{\mathcal{A}}(\mathcal{M})$ be the center of $\mathcal{M}$ relative to $\mathcal{A}$, where $\mathcal{Z}_{\mathcal{A}}(\mathcal{M}):=\{m\in \mathcal{M}: a\cdot m=m\cdot a\ \ \text{for all}\ \ a\in \mathcal{A}\}$. We shall write just $\mathcal{Z(A)}$ to denote $\mathcal{Z}_{\mathcal{A}}(\mathcal{A})$. A linear mapping $d: \mathcal{A} \longrightarrow \mathcal{M}$ is called an \textit{{\rm (}associative{\rm )} derivation} if $d(xy)=d(x)y+x d(y)$ holds true for all $x, y\in \mathcal{A}$. Let us denote the Lie (resp. Jordan) product of arbitrary elements $x, y\in \mathcal{A}$ by $[x, y]=xy-yx$ (resp. $x\circ y=xy+yx$). A \textit{Lie derivation} (resp. \textit{Lie triple derivation}) is a linear mapping $L: \mathcal{A} \longrightarrow \mathcal{M}$ which satisfies the derivation rule according to the Lie product, i.e.,
$$
\begin{aligned}
L([x, y])&=[L(x), y]+[x, L(y)] \\
(\text{resp.}\ \ L([[x, y],z])&=[[L(x), y],z]+[[x, L(y)],z]+[[x,y],L(z)])
\end{aligned}
$$
for all $x,y,z\in \mathcal{A}$. Recall that a linear mapping $\delta: \mathcal{A} \longrightarrow \mathcal{M}$ is a \textit{Jordan derivation} if $\delta(x\circ y)=\delta(x)\circ y+x\circ \delta(y)$ holds true for all $x, y\in \mathcal{A}$. Clearly, every derivation is a Lie
derivation as well as a Jordan derivation, and every Lie derivation is a Lie triple derivation. But, the converse statements are not true in general. It should be remarked that every Jordan derivation is also a Lie triple derivation, which is due to the formula $[[x, y], z]=x\circ (y\circ z)-y\circ (x\circ z)$ for all $x, y, z\in \mathcal{A}$. Consequently, the class of Lie triple derivations include both classes of Jordan derivations and Lie derivations simultaneously.

Inspired by the definitions of
Lie derivation and Lie triple derivation, one naturally expect to extend them in
one more general way. Suppose that $n\geq 2$ is a fixed positive
integer. Let us see a sequence of polynomials
$$
\begin{aligned}
p_1(x_1)&=x_1,\\
p_2(x_1,x_2)&=[p_1(x_1),x_2]=[x_1,x_2],\\
p_3(x_1,x_2,x_3)&=[p_2(x_1,x_2),x_3]=[[x_1,x_2],x_3],\\
p_4(x_1,x_2,x_3,x_4)&=[p_3(x_1,x_2,x_3),x_4]=[[[x_1,x_2],x_3],x_4],\\
 &\cdots\cdots,\\
p_n(x_1,x_2,\cdots,x_n)&=[p_{n-1}(x_1,x_2,\cdots,x_{n-1}),x_n].
\end{aligned}
$$
The polynomial $p_n(x_1,x_2,\cdots,x_n)$ is said to be an
$(n-1)$-\textit{th commutator} ($n\geq 2$).
A \textit{Lie $n$-derivation} is a $\mathbb{C}$-linear mapping $L:
\mathcal{A} \longrightarrow \mathcal{M}$ which satisfies the rule
$$
L(p_n(x_1,x_2,\cdots,x_n))=\sum_{k=1}^n
p_n(x_1,\cdots,x_{k-1}, L(x_k),x_{k+1},\cdots,x_n)
$$
for all $x_1,x_2,\cdots,x_n\in \mathcal{A}$. Lie $n$-derivations were introduced by Abdullaev
\cite{Abdullaev}, where the form of Lie $n$-derivations of a certain
von Neumann algebra (or of its skew-adjoint part) was described.
According to the definition, each Lie derivation is a Lie $2$-derivation and each Lie triple derivation is a Lie
$3$-derivation. Fo\v sner et al \cite{FosnerWeiXiao} showed that every Lie $n$-derivation
from $\mathcal{A}$ into $\mathcal{M}$ is a Lie $(n+k(n-1))$-derivation for each $k\in \Bbb{N}_0$.
Lie $2$-derivations, Lie $3$-derivations and Lie $n$-derivations are collectively referred to as \textit{Lie-type derivations}. Moreover, if 
$D\colon \mathcal{A}\longrightarrow \mathcal{M}$ is an associative derivation and $H\colon \mathcal{A}\longrightarrow \mathcal{Z}_{\mathcal{A}}(\mathcal{M})$ is a linear mapping vanishing on each $(n-1)$-th commutator $p_n(A_1, A_2, \cdots, A_n)$ for all $A_1, A_2, \cdots, A_n\in \mathcal{A}$, then the mapping $L=D+H$ is a Lie-type derivation. We
shall say that a Lie-type derivation is of \textit{standard form} in the case where it can be expressed in the
preceding form. Lie-type derivations and their standard decomposition problems in different backgrounds are extensively studied by a number of people, see \cite{ Abdullaev, BaiDu, Cheung, ForrestMarcoux, FosnerWeiXiao, Johnson, Lu, MathieuVillena, Miers2, Miers3, Qi, QiHou, SunMa, XiaoWei, ZhangWuCao}.

Nest algebras of operators on Hilbert space were introduced in 1965-1966 by
Ringrose \cite{Ringrose1, Ringrose2} and generalize in a certain way the set of all $n\times n$ matrices to infinite dimensions. There has been a great intrest in studying structure theory and various linear mappings of nest algebras on (Hilbert)Banach spaces, culminating in the elegant monograph by Davidson \cite{Davidson}. It was Marcoux and Sourour who initiated the study of Lie-type mappings on nest algebras in \cite{MarcouxSourour}. They extended the analysis of Lie automorphisms of non-selfadjoint operator algebras to the infinite dimensional setting by characterizing Lie isomorphisms of nest algebras. Let $\mathcal{H}$ be a complex, separable Hilbert space. Let ${\rm Alg}\mathcal{N}$ and ${\rm Alg}\mathcal{M}$ be the nest algebras associated with nests $\mathcal{N}$ and $\mathcal{M}$, respectively. A linear mapping $\Phi\colon {\rm Alg}\mathcal{N}\longrightarrow {\rm Alg}\mathcal{M}$ is a Lie isomorphism if and only if for all $A\in {\rm Alg}\mathcal{N}, \Phi(A)=\Psi(A)+h(A)I$, where $\Psi$ is an isomorphism or the negative of an anti-isomorphism and $h$ maps ${\rm Alg}\mathcal{N}$ into the center of ${\rm Alg}\mathcal{M}$ vanishing on all commutators. In the spirit of Marcoux and Sourour's work, Wang and Lu \cite{WangLu}, Qi et al \cite{QiHouDeng} independently obtained similar results for nest algebras on Banach spaces. In another direction, Christensen \cite{Christensen} proved every derivation on a nest algebra to be continuous and implemented. Spivack \cite{Spivack} developed the notion of nest
algebras to sets of operators on Banach space and proved that every continuous derivation on such a nest algebra is implemented by a bounded operator. Lu \cite{Lu} and Zhang et al \cite{ZhangWuCao} independently investigated Lie triple derivations of nest algebras on Hilbert spaces and observed that every Lie triple drivation has the standard form. Sun and Ma \cite {SunMa} generalized this result to nest algebras on Banach spaces by a series of complicated computations. Donsig et al \cite{DonsigForrestMarcoux} investigated derivations of semi-nest algebras and established a close tie between derivations and cohomology theory of semi-nest algebras.

Motivated by the above works, we will totally characterize all Lie-type derivations of nest algebras on Banach spaces in the current work. It turns out that every Lie-type derivation of nest algebras on Banach spaces is of standard form. It seems to us that this is the ultimate version of this line of characterization theorems in the case of nest algebra setting. It is clear that our main Theorem \ref{xxsec3.1} is not only one more common generalization of the afore-mentioned results, but our approach is quite different from that of existing works. For instance, Sun and Ma \cite{SunMa} derived their results by a tedious and complicated computational process. Adopting their methods to achieve our result, it would be an unbelievable task. Our approach is by far more conceptial, and reflects constructive skills strongly. This can be seen by the combinatorial applications of two crucial tools---rank one operators and the well-known Hanh-Banach theorem.

The outline of this article is organized as follows. We give some basic and necessary facts concerning on nest algebras in the second Section \ref{xxsec2}. The third Section \ref{xxsec3} is devoted to our main Theorem \ref{xxsec3.1}. Its proof will be realized by a systemic analysis for several different cases.

\section{Preliminaries}
\label{xxsec2}

Let $\mathcal{X, Y}$ be complex Banach spaces. Let us denote by $\mathcal{B(X,Y)}$ and ${\rm Hom}(\mathcal{X},\mathcal{Y})$ the Banach space of all bounded linear operators from $\mathcal{X}$ to $\mathcal{Y}$ and the linear space of all linear operators from $\mathcal{X}$ to $\mathcal{Y}$, respectively. We write $\mathcal{B(X)}:=\mathcal{B(X,X)}$ for the algebra of all boundend linear operators from $\mathcal{X}$ to itself, and $\mathcal{B(X)}$ has an identity, namely the identity operator $\i$. The \emph{topological dual space} $\mathcal{X}^*$ of $\mathcal{X}$ is defined to be $\mathcal{X}^*:=\mathcal{B}(\mathcal{X},\mathbb{C})$, i.e., the Banach space of all bounded linear functionals on $\mathcal{X}$. For arbitrary elements $x\in \mathcal{X}$ and $f\in \mathcal{X}^*$, one can define a \emph{rank-one operator} $x\ot f$ by $x\ot f(y)=f(y)x\;(\forall y\in \mathcal{X})$. $x\ot f$ is an element of $\mathcal{B(X)}$ and has the property $\|x\ot f\|=\|x\|\|f\|$. For any $T\in \mathcal{B(X,Y)}$, its \emph{adjoint operator} $T^*$ is refered to $T^*f(x)=f(Tx)\;(\forall f\in Y^*, x\in \mathcal{X})$. Then $*\colon \mathcal{B(X,Y)}\to\mathcal{B}(\mathcal{Y}^*, \mathcal{X}^*)$ is an isometric embedding. For an arbitrary subset $\mathcal{N}\seq \mathcal{X}$, we denote by $\mathcal{N}^\bot:=\{f\in \mathcal{X}^*\mid f(x)=0,\forall x\in \mathcal{N}\}$ the \emph{annihilator} of $\mathcal{N}$. For a subspace $\mathcal{V}\seq \mathcal{X}$, the \emph{codimension} $\cdim \mathcal{V}$ of $\mathcal{V}$ in $\mathcal{X}$ is defined as the dimension $\dim \mathcal{X/V}$ of the quotient space $\mathcal{X/V}$. Let us present two elementary results concerning dimension and codimension.

\begin{lem}\label{xxsec2.1}
Let $\mathcal{X}$ be a normed space, and $\mathcal{V}$ be a closed subspace of $\mathcal{X}$. If $\cdim \mathcal{V}>1$, then there exists $f\in \mathcal{X}^*\setminus\{0\}$ such that $\mathcal{V}+\lspan\{x\}\seq\ker f$ for all $x\in \mathcal{X}$. 
\end{lem}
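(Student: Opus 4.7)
The plan is to apply the Hahn--Banach theorem once the subspace $\mathcal{V}+\lspan\{x\}$ has been verified to be proper and closed in $\mathcal{X}$. Accordingly, I would fix an arbitrary $x\in \mathcal{X}$ and set $\mathcal{W}:=\mathcal{V}+\lspan\{x\}$. The goal is to produce a nonzero continuous linear functional $f$ on $\mathcal{X}$ whose kernel contains $\mathcal{W}$.

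First, I would establish that $\mathcal{W}$ is a closed subspace. This follows from the standard fact that the sum of a closed subspace and a finite-dimensional subspace in a normed space is again closed; here $\mathcal{V}$ is closed by hypothesis and $\lspan\{x\}$ has dimension at most one. If desired, a short self-contained argument can be given by passing to the quotient map $\pi\colon \mathcal{X}\to \mathcal{X}/\mathcal{V}$: the image $\pi(\mathcal{W})=\lspan\{\pi(x)\}$ is closed in $\mathcal{X}/\mathcal{V}$ because it is finite-dimensional, and $\mathcal{W}=\pi^{-1}(\pi(\mathcal{W}))$ is the preimage of a closed set under a continuous map.

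Next, I would verify that $\mathcal{W}\sneq \mathcal{X}$. The hypothesis $\cdim \mathcal{V}>1$ means that $\dim(\mathcal{X}/\mathcal{V})>1$, whereas the image $\pi(\mathcal{W})$ inside $\mathcal{X}/\mathcal{V}$ has dimension at most $1$. Therefore $\pi(\mathcal{W})\sneq \mathcal{X}/\mathcal{V}$, and consequently $\mathcal{W}\sneq \mathcal{X}$.

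Finally, since $\mathcal{W}$ is a proper closed subspace of $\mathcal{X}$, I would pick any $x_0\in \mathcal{X}\sm \mathcal{W}$ and apply the Hahn--Banach theorem to produce a bounded linear functional $f\in \mathcal{X}^*$ with $f|_{\mathcal{W}}\equiv 0$ and $f(x_0)\neq 0$. Such an $f$ is nonzero and satisfies $\mathcal{V}+\lspan\{x\}\seq \ker f$, completing the argument. There is no real obstacle here; the only point requiring any care is the closedness of $\mathcal{W}$, and this is entirely standard.
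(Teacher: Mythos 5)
Your proof is correct and follows essentially the same route as the paper: fix $x$, form $\mathcal{W}=\mathcal{V}+\lspan\{x\}$, check that it is a proper closed subspace, and invoke Hahn--Banach to obtain a nonzero $f$ vanishing on $\mathcal{W}$. The only cosmetic difference is that you use the point-versus-closed-subspace form of Hahn--Banach, whereas the paper picks a nonzero functional on the quotient $\mathcal{X}/\mathcal{W}$ and composes with the quotient map; your verification of $\mathcal{W}\sneq\mathcal{X}$ via the image $\pi(\mathcal{W})$ in $\mathcal{X}/\mathcal{V}$ is, if anything, slightly more explicit than the paper's.
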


\begin{proof}
For an arbitrary $x\in \mathcal{X}$, let us set $\mathcal{W}=\mathcal{V}+\lspan\{x\}$. Then $\mathcal{W}$ is closed, whch is due to the closedness of $\mathcal{V}$. In addition, $\cdim \mathcal{V}>1$ implies $\cdim \mathcal{W}\geq 1$, namely $\dim \mathcal{X/W}\geq 1$. By Hahn-Banach theorem, we know that there is $\wt f\in(\mathcal{X/W})^*\setminus\{0\}$. Since $\mathcal{W}$ is closed, the quotient mapping $\theta\colon \mathcal{X}\to \mathcal{X/W}, x\mapsto x+\mathcal{W}$ is continuous \cite[Theorem 1.5.8]{KadisonRingrose1}. Let us write $f=\wt f\circ \theta$. And then $f\in \mathcal{W}^\bot$ and $f$ is non-vanishing. 
\end{proof}

\begin{lem}\label{xxsec2.2}
Let $\mathcal{X}$ be a linear space over the complex field $\mathbb{C}$. Suppose that $\mathcal{V}$ and $\mathcal{W}$ are subspaces of $\mathcal{X}$ with $\cdim \mathcal{V}=\cdim \mathcal{W}<\infty$. If $\mathcal{V}\seq \mathcal{W}$, then $\mathcal{V}=\mathcal{W}$.
\end{lem}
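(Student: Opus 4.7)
The plan is to work with the natural quotient map between $\mathcal{X}/\mathcal{V}$ and $\mathcal{X}/\mathcal{W}$ and use the rank-nullity theorem. Since $\mathcal{V}\subseteq\mathcal{W}$, every coset $x+\mathcal{V}$ is contained in the coset $x+\mathcal{W}$, so the assignment $\pi\colon \mathcal{X}/\mathcal{V}\to \mathcal{X}/\mathcal{W}$, $x+\mathcal{V}\mapsto x+\mathcal{W}$, is a well-defined linear map. It is obviously surjective.

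Next I would invoke the hypothesis $\cdim \mathcal{V}=\cdim \mathcal{W}<\infty$, which says that $\dim \mathcal{X}/\mathcal{V}=\dim \mathcal{X}/\mathcal{W}$ is a finite number. Since $\pi$ is a surjective linear map between two finite-dimensional vector spaces of equal dimension, the rank-nullity theorem forces $\ker \pi=\{0\}$, i.e.\ $\pi$ is an isomorphism.

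Finally I would identify the kernel of $\pi$ explicitly: $\pi(x+\mathcal{V})=0$ means $x\in \mathcal{W}$, so $\ker \pi=\mathcal{W}/\mathcal{V}$. Combining this with $\ker \pi=\{0\}$ yields $\mathcal{W}/\mathcal{V}=\{0\}$, hence $\mathcal{W}\subseteq \mathcal{V}$, and together with the given inclusion $\mathcal{V}\subseteq \mathcal{W}$ we conclude $\mathcal{V}=\mathcal{W}$.

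There is no real obstacle here; the lemma is a purely algebraic statement requiring no topology, and the only subtlety is noticing that the map $\pi$ is well defined precisely because $\mathcal{V}\subseteq \mathcal{W}$. The finiteness assumption $\cdim \mathcal{V}<\infty$ is used exactly once, namely to apply rank-nullity (without it, a surjection between spaces of equal infinite dimension need not be injective).
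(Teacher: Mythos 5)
Your proof is correct and is essentially the paper's argument: the paper invokes the isomorphism $\mathcal{X}/\mathcal{W}\cong(\mathcal{X}/\mathcal{V})/(\mathcal{W}/\mathcal{V})$ and computes $\dim\mathcal{W}/\mathcal{V}=\cdim\mathcal{V}-\cdim\mathcal{W}=0$, which is exactly your surjection $\pi$ together with rank--nullity, just packaged as the third isomorphism theorem. Nothing further is needed.
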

\begin{proof}
In view of the fact $\mathcal{X/W}\cong (\mathcal{X/V})/(\mathcal{W/V})$, 
we have
\[
\infty>\dim \mathcal{W/V}=\cdim \mathcal{V}-\cdim \mathcal{W}=0,
\]
which entails $\mathcal{W}=\mathcal{V}$.
\end{proof}

Let $\mathcal{X}$ be a Banach space over the complex field $\mathbb{C}$, $\mathcal{B(X)}$ be the algebra of all bounded linear operators on $\mathcal{X}$. Let $\Lambda$ be an index set. A \textit{nest}
is a set $\mathcal{N}=\{N_\lambda\}_{\lambda\in \Lambda}$ of closed subspaces of $\mathcal{X}$
satisfying the following conditions:
\begin{enumerate}
\item[(1)] $\{0\}, \mathcal{X}\in \mathcal{N}$;
\item[(2)] If $N_1, N_2\in \mathcal{N}$, then
either $N_1\subseteq N_2$ or $N_2\subseteq N_1$;
\item[(3)] If $\{N_\lambda\}_{\lambda\in \Lambda}\subseteq \mathcal{N}$,
then $\bigcap_{\lambda\in \Lambda}N_\lambda\in \mathcal{N}$;
\item[(4)] If $\{N_\lambda\}_{\lambda\in \Lambda}\subseteq \mathcal{N}$, then the
norm closure of the linear span of $\bigcup_{i\in \Lambda} N_\lambda$ also lies
in $\mathcal{N}$.
\end{enumerate}
If $\mathcal{N}=\{\{0\}, \mathcal{X}\}$, then $\mathcal{N}$ is called a
\textit{trivial nest}, otherwise it is called a \textit{non-trivial nest}. As usual, we set  $\bigwedge_{\l\in\Lambda}N_\l:=\bigcap_{\l\in\Lambda}N_\l$, $\bigvee_{\l\in\Lambda}N_\l:=\ol{\lspan\bigcup_{\l\in\Lambda}N_\l}$.

The nest algebra associated with the nest $\mathcal{N}$, denoted by
${\rm Alg}\mathcal{N}$, is the weakly closed operator algebra
consisting of all operators that leave $\mathcal{N}$ invariant,
i.e.,
$$
{\rm Alg}\mathcal{N}=\{\ T\in
\mathcal{B(X)}\ | \ TN\subseteq N \ \text{for all}\  N\in
\mathcal{N}\ \}.
$$
It should be remarked that every finite dimensional nest algebra
is isomorphic to a real of complex block upper triangular matrix
algebra. If $\mathcal{N}$ is trivial, then
${\rm Alg}\mathcal{N}=\mathcal{B(X)}$, which is a prime algebra over
the real or complex field $\mathbb{C}$. In this paper we only
consider the nontrivial nest algebras.

Let $\mathcal{N}$ be a nest on a complex Banach space
$\mathcal{X}$ such that there exists a $N\in \mathcal{N}$
complemented in $\mathcal{X}$ and $\text{Alg}\mathcal{N}$ be the
nest algebra associated with $\mathcal{N}$. Then $\text{Alg}\mathcal{N}$ is a
triangular algebra over $\Bbb{C}$. Indeed, Since $N\in
\mathcal{N}$ is complemented in $\mathcal{X}$, there is a bounded
idempotent operator $P$ with range $N$. It is easy to check that
$P\in \text{Alg}\mathcal{N}$. Let us denote $M=(I-P)(\mathcal{X})$,
and let $\mathcal{A}=P\text{Alg}\mathcal{N}|_N,
\mathcal{M}=P\text{Alg}\mathcal{N}|_M$ and
$\mathcal{B}=(I-P)\text{Alg}\mathcal{N}|_M$. Then
$_{\mathcal{A}}\mathcal{M}_{\mathcal{B}}$ is faithful as left
$\mathcal{A}$-module and right $\mathcal{B}$-module. We therefore say that 
$$
{\rm Alg}\mathcal{N}=\left[
\begin{array}
[c]{cc}%
\mathcal{A} & \mathcal{M}\\
O & \mathcal{B}\\
\end{array}
\right].
$$
Note that ${\rm Alg}\mathcal{N}$ is a central algebra over its center $\mathcal{Z}({\rm Alg}\mathcal{N})=\Bbb{C}I$.

In particular, if $\mathcal{X}$ is a Hilbert space, then every nontrivial nest algebra is a
triangular algebra. Indeed, if $N\in \mathcal{N}\backslash \{\{0\}, \mathcal{X}\}$
and $E$ is the orthogonal projection onto $N$, then
$\mathcal{N}_1=E(\mathcal{N})$ and
$\mathcal{N}_2=(1-E)(\mathcal{N})$ are nests of $N$ and $N^{\perp}$,
respectively. Moreover,
${\rm Alg}\mathcal{N}_1=E{\rm Alg}\mathcal{N}E,
{\rm Alg}\mathcal{N}_2=(1-E){\rm Alg}\mathcal{N}(1-E)$ are
nest algebras and
$$
{\rm Alg}\mathcal{N}=\left[
\begin{array}
[c]{cc}%
{\rm Alg}\mathcal{N}_1 & E{\rm Alg}\mathcal{N}(1-E)\\
O & {\rm Alg}\mathcal{N}_2\\
\end{array}
\right] .
$$
However, it is not always the case for a nest $\mathcal{N}$ on a
general Banach space $\mathcal{X}$, since $N\in \mathcal{N}$ may be not
complemented. We refer the reader to \cite{Davidson} for the theory
of nest algebras.

For a Banach space $\mathcal{X}$, if $\N$ is a non-trivial nest on $\mathcal{X}$, then there exist an $N\in\N\setminus\{\{0\}, \mathcal{X}\}$ and a non-vanishing $f\in N^\bot$ (By Hahn-Banach theorem we get a non-vanishing $\wt f\in (\mathcal{X}/N)^*$, where $N$ is a closed subspace. And hence the mapping $g\colon \mathcal{X}\to \mathcal{X}/N$ is continuous. It suffices to take $f=\wt f\circ g$). We therefore have $\mathcal{X}/\ker f\cong\mathbb{C}$. By the fact $\dim\ker f\geq 1$, we know that the dimension of $\mathcal{X}$ is no less than $2$. For an arbitrary $N\in\N$, one can define
\[
N_-:=\bigvee\{M\in\N\colon M\sneq N\}\;(N\neq\{0\}),\ 
N_+:=\bigwedge\{M\in\N\colon N\sneq M\}\;(N\neq \mathcal{X}),
\]
and we set $\{0\}_-:=\{0\}$, $\mathcal{X}_+:=\mathcal{X}$.

\begin{lem}\emph{(\cite[Lemma~2.1]{SunMa})}\label{xxsec2.3}
Let $\N$ be a nest on a Banach space $X$. Then $x\ot f\in\alg\N$ if and only if there exists $N\in\N$ such that $x\in N$ and $f\in N_-^\bot:=(N_-)^\bot$ and equivalently, if and only if there exists $N\in\N$ such that $x\in N_+$ and $f\in N^\bot$.
\end{lem}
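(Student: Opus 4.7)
\emph{Reduction.} My plan is to first recast the abstract condition $x\ot f\in\alg\N$ as a pointwise dichotomy on nest elements. The trivial case $x=0$ (or $f=0$) is immediate with $N=\{0\}$, so assume $x\neq 0$. For an $M\in\N$, $(x\ot f)(M)\seq M$ reads $f(y)x\in M$ for every $y\in M$; since $x\neq 0$, this forces either $x\in M$ or $M\seq\ker f$. Hence $x\ot f\in\alg\N$ iff every $M\in\N$ falls into one of these two buckets.

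\emph{First characterization.} For the forward direction, set $N:=\bigcap\{M\in\N:x\in M\}$, which lies in $\N$ by axiom (3) and is the smallest nest element containing $x$. By minimality, any $M\sneq N$ in $\N$ fails to contain $x$, hence $M\seq\ker f$. Closedness of $\ker f$ then upgrades this to $N_-=\ol{\lspan\bigcup\{M\in\N:M\sneq N\}}\seq\ker f$, i.e.\ $f\in N_-^\bot$. For the reverse direction, given $N\in\N$ with $x\in N$ and $f\in N_-^\bot$, the total ordering of $\N$ places any $M\in\N$ either in $\{M:M\sneq N\}$ (whence $M\seq N_-\seq\ker f$) or in $\{M:N\seq M\}$ (whence $x\in N\seq M$), so the pointwise dichotomy from the reduction is satisfied.

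\emph{Second characterization.} To equate the two forms I swap $N$ with $N_-$ (or $N_+$). Given $x\in N$ and $f\in N_-^\bot$, put $N':=N_-$. Then $f\in(N')^\bot$; moreover any $M\in\N$ with $N_-\sneq M$ must contain $N$ (since $M\sneq N$ would give $M\seq N_-$), so $(N')_+=(N_-)_+\supseteq N\ni x$. Conversely, given $x\in N_+$ and $f\in N^\bot$, put $N':=N_+$: then $x\in N'$, and any $M\in\N$ with $M\sneq N_+$ satisfies $M\seq N$ by definition of $N_+$, hence $(N')_-\seq N\seq\ker f$ and $f\in(N')_-^\bot$.

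\emph{Main difficulty.} No single step is deep; the argument is fundamentally combinatorial bookkeeping on the totally ordered nest. The two points needing a little care are invoking closedness of $\ker f$ when pushing the containment $M\seq\ker f$ up to $N_-$, and aligning the boundary conventions $\{0\}_-:=\{0\}$ and $\mathcal{X}_+:=\mathcal{X}$ with the degenerate cases $N=\{0\}$ or $N=\mathcal{X}$.
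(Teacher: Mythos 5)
The paper does not prove this lemma; it simply quotes it from \cite[Lemma~2.1]{SunMa}, so there is no in-text argument to compare against. Your proof is correct and self-contained: the pointwise dichotomy ($x\in M$ or $M\seq\ker f$ for every $M\in\N$), the choice of the smallest nest element containing $x$ via axiom (3), and the $N\leftrightarrow N_\pm$ swap for the second characterization all go through, with the closedness of $\ker f$ correctly invoked to pass from the union to $N_-$. One trivial slip: in the degenerate case $f=0$ the witness $N=\{0\}$ does not satisfy $x\in N$ unless $x=0$; take $N=\mathcal{X}$ instead (then $x\in\mathcal{X}=\mathcal{X}_+$ and $0\in\mathcal{X}_-^\bot\cap\mathcal{X}^\bot$ trivially). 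This does not affect the substance of the argument.
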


Let $\mathcal{A}$ be an associative algebra over the complex field $\mathbb{C}$, $\mathcal{M}$ be an $(\mathcal{A}, \mathcal{A})$-bimodule and   
$\mathcal{Z}_{\mathcal{A}}(\mathcal{M})$ be the center of $\mathcal{M}$ relative to $\mathcal{A}$.

\begin{prop}\emph{(\cite[Proposition 1.1]{FosnerWeiXiao})}\label{xxsec2.4}
Let $L\colon\A\to\M$ be a Lie $n$-derivation. Then for each $k\in\Nn$, $L$ is also a Lie $n+k(n-1)$-derivation. In particular, if $L$ a derivation, then $L$ is also a Lie $n$-derivation. 
\end{prop}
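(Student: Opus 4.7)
The plan is to induct on $k$, with the central ingredient being a simple ``associativity''-type identity for the left-nested commutators $p_n$. Specifically, I would first verify that for all integers $m,n\geq 2$,
\[
p_{m+n-1}(x_1,\ldots,x_{m+n-1})=p_n\bigl(p_m(x_1,\ldots,x_m),\,x_{m+1},\ldots,x_{m+n-1}\bigr).
\]
This follows from a short induction on $n$: the case $n=2$ is just the defining recursion $p_{m+1}(y_1,\ldots,y_{m+1})=[p_m(y_1,\ldots,y_m),y_{m+1}]$, and the inductive step peels off one outer bracket.

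With this identity in hand, I would prove the main assertion by induction on $k$. The base case $k=0$ is the hypothesis that $L$ is a Lie $n$-derivation. For the inductive step, set $m=n+k(n-1)$ and assume $L$ is already a Lie $m$-derivation. Apply $L$ to the right-hand side of the identity above and use the Lie $n$-derivation rule on the outer $p_n$; this produces one term involving $L(p_m(x_1,\ldots,x_m))$, which the inductive hypothesis expands via the Lie $m$-derivation rule, together with $n-1$ terms in which $L$ acts on one of $x_{m+1},\ldots,x_{m+n-1}$. Re-applying the identity in reverse to each summand collapses everything to
\[
\sum_{\ell=1}^{m+n-1}p_{m+n-1}\bigl(x_1,\ldots,L(x_\ell),\ldots,x_{m+n-1}\bigr),
\]
which is precisely the Lie $(m+n-1)$-derivation identity. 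Since $m+n-1=n+(k+1)(n-1)$, the induction closes.

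For the ``in particular'' clause, one checks directly that every associative derivation is a Lie $2$-derivation, via $L([x,y])=L(x)y+xL(y)-L(y)x-yL(x)=[L(x),y]+[x,L(y)]$. Applying the first part of the proposition with $n=2$ then gives that $L$ is a Lie $(2+k)$-derivation for every $k$, and hence a Lie $n$-derivation for every $n\geq 2$.

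I do not foresee any substantive obstacle; the whole argument is combinatorial and rests on the associativity-type identity above. The only point requiring mild bookkeeping is that in the inductive step the indices $\{1,\ldots,m\}$ arising from the inner Lie $m$-derivation rule and $\{m+1,\ldots,m+n-1\}$ arising from the outer Lie $n$-derivation rule partition $\{1,\ldots,m+n-1\}$, which is immediate from their disjointness.
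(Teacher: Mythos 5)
Your proof is correct. The paper itself gives no argument for this proposition --- it is quoted verbatim from \cite[Proposition 1.1]{FosnerWeiXiao} --- but your route via the splicing identity $p_{m+n-1}(x_1,\ldots,x_{m+n-1})=p_n\bigl(p_m(x_1,\ldots,x_m),x_{m+1},\ldots,x_{m+n-1}\bigr)$, the multilinearity of $p_n$ in its first slot, and induction on $k$ is exactly the standard argument behind the cited result, and every step (including the direct verification that an associative derivation is a Lie $2$-derivation for the ``in particular'' clause) checks out.
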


\begin{prop}\label{xxsec2.5}
Let $D\colon\A\to\M$ be a derivation and $H\colon\A\to\Z_\A(\M)$ be a linear mappping. Then $D+H$ is a Lie $n$-derivation if and only if $H$ is a linear mapping vanishing on each $(n-1)$-th commutator $p_n(A_1,\cdots,A_n)$ for all $A_1, A_2, \cdots, A_n\in \mathcal{A}$.
\end{prop}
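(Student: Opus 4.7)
The plan is to reduce both directions to a single structural fact: if a central element $c\in\Z_\A(\M)$ is substituted into any slot of the iterated commutator $p_n$, the value is zero. Once this is in hand, Proposition \ref{xxsec2.4} supplies everything else.

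I would first record the central-element vanishing. Fix $c\in\Z_\A(\M)$ and $A_1,\dots,A_n\in\A$, and consider $p_n(A_1,\dots,A_{k-1},c,A_{k+1},\dots,A_n)$. If $k=1$, then already $p_2(c,A_2)=c\cdot A_2-A_2\cdot c=0$ by the definition of $\Z_\A(\M)$, and induction on the nesting gives $p_j(c,A_2,\dots,A_j)=0$ for $2\leq j\leq n$. If $k\geq 2$, then $p_{k-1}(A_1,\dots,A_{k-1})\in\A$ (a Lie polynomial in elements of $\A$), so at step $k$ one encounters
\[
p_k=\bigl[p_{k-1}(A_1,\dots,A_{k-1}),\,c\bigr]=0
\]
again by centrality, and all subsequent outer commutators propagate the zero. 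Hence in every case
\[
p_n(A_1,\dots,A_{k-1},c,A_{k+1},\dots,A_n)=0. \tag{$\star$}
\]

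For the sufficiency, assume $H$ vanishes on every $(n-1)$-th commutator. Proposition \ref{xxsec2.4} says the derivation $D$ is a Lie $n$-derivation, so by linearity
\[
(D+H)(p_n(A_1,\dots,A_n))=D(p_n(A_1,\dots,A_n))=\sum_{k=1}^{n}p_n(A_1,\dots,D(A_k),\dots,A_n).
\]
On the other hand, using $(\star)$ with $c=H(A_k)$,
\[
\sum_{k=1}^{n}p_n(A_1,\dots,(D+H)(A_k),\dots,A_n)=\sum_{k=1}^{n}p_n(A_1,\dots,D(A_k),\dots,A_n),
\]
and the two sides agree, so $D+H$ is a Lie $n$-derivation.

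For the necessity, assume $D+H$ is a Lie $n$-derivation. Since $D$ is a Lie $n$-derivation by Proposition \ref{xxsec2.4}, the difference $H=(D+H)-D$ is a Lie $n$-derivation as well. Applying its defining identity and invoking $(\star)$ for every term on the right,
\[
H(p_n(A_1,\dots,A_n))=\sum_{k=1}^{n}p_n(A_1,\dots,H(A_k),\dots,A_n)=0,
\]
which is precisely the required vanishing. There is no genuine obstacle here; the only point requiring care is the case analysis in establishing $(\star)$, in particular making sure that intermediate commutators $p_j(A_1,\dots,A_j)$ stay in $\A$ so that the bimodule-centrality hypothesis applies when the central element is finally introduced.
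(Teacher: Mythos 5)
Your proof is correct and follows essentially the same route as the paper: both rest on Proposition \ref{xxsec2.4} together with the observation that a central element placed in any slot of $p_n$ kills the whole commutator. The paper only writes out the necessity (treating sufficiency as straightforward) and folds your fact $(\star)$ into the single step replacing $L(A_i)$ by $D(A_i)$; your version merely makes $(\star)$ and the sufficiency direction explicit.
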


\begin{proof}
The sufficiency is straightforward and we only prove the necessity. If $L=D+H$ is a Lie $n$-derivation, then $H=L-D$ is a linear mapping. It follows from  Proposition \ref{xxsec2.4} that 
\begin{align*}
L(p_n(A_1,\cdots,A_n))=&\sum_{i=1}^np_n(A_1,\cdots,\mathop{L(A_i)} _{\substack{\uparrow\\ i\text{-th}}},\cdots,A_n)\\
=&\sum_{i=1}^np_n(A_1,\cdots,\mathop{D(A_i)} _{\substack{\uparrow\\ i\text{-th}}},\cdots,A_n)\\
=&D(p_n(A_1,\cdots,A_n))
\end{align*}
for all $A_1,\cdots,A_n\in\A$. And hence $H(p_n(A_1,\cdots,A_n))=0$. 
\end{proof}

The following lemma is one common generalization of \cite[Problem 230]{Halmos}. 

\begin{lem}\label{xxsec2.6}
Let $A,B\in\mathcal{B(X)}$, $\l\in\mathbb{C}$. If $[A,B]=\l\i$, then $\l=0$.
\end{lem}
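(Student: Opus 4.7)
The plan is to use the classical Wielandt–Wintner argument that shows the identity operator on a Banach space is not expressible as a commutator. Suppose toward a contradiction that $\lambda\neq 0$. Then replacing $B$ by $\lambda^{-1}B$ one reduces to the case $[A,B]=I$, so it suffices to prove that $[A,B]=I$ is impossible for $A,B\in\mathcal{B(X)}$.

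The key lemma to establish first is the commutator formula
\[
[A,B^{n}]=nB^{n-1}\qquad (n\geq 1),
\]
which I would prove by a one-line induction using the Leibniz identity $[A,B^{n+1}]=[A,B^{n}]B+B^{n}[A,B]$ together with $[A,B]=I$.

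Once this is in hand, I would extract a norm inequality and push $n\to\infty$. From $[A,B^{n}]=nB^{n-1}$ one obtains
\[
n\,\|B^{n-1}\|=\|[A,B^{n}]\|\leq 2\|A\|\,\|B^{n}\|\leq 2\|A\|\,\|B\|\,\|B^{n-1}\|.
\]
Either $B^{n-1}\neq 0$ for all $n$, in which case dividing gives $n\leq 2\|A\|\|B\|$ for every positive integer $n$, which is impossible; or there is a smallest $n$ with $B^{n-1}=0$. In the latter case $n=1$ forces $I=B^{0}=0$, a contradiction, while $n\geq 2$ forces $B^{n-2}\neq 0$ yet $(n-1)B^{n-2}=[A,B^{n-1}]=0$, again a contradiction.

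The argument is essentially a single-trick proof, so there is no real obstacle; the only thing to be careful about is the reduction from a general $\lambda\neq 0$ to $\lambda=1$ (straightforward rescaling) and the slight subtlety in handling the base case $n=1$ of the final contradiction, since one must exclude $I=0$ separately. Note that the argument uses only the submultiplicativity of the norm on $\mathcal{B(X)}$ and the existence of a unit, so it applies verbatim in any unital normed algebra.
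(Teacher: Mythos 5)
Your proof is correct and is essentially the same Wielandt--Wintner argument the paper uses: the paper derives $A^nB-BA^n=n\l A^{n-1}$ by induction and then splits into the nilpotent and non-nilpotent cases with the same norm estimate $n\|A^{n-1}\||\l|\leq 2\|A\|\|B\|\|A^{n-1}\|$. The only differences are cosmetic (you take powers of $B$ rather than $A$ and normalize $\l$ to $1$ first), so nothing further is needed.
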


\begin{proof}
According to the assumption $[A,B]=\l\i$, we see that
\begin{align*}
A^2B-BA^2=&A^2B-ABA+ABA-BA^2\\
=&A(AB-BA)+(AB-BA)A=2\l A.
\end{align*}
A direct induction on $n$ shows that  $A^nB-BA^n=n\l A^{n-1}$. If the nilpotent index of $A$ is $n\in\Nn$, then $\l=0$. If $A$ is not nilpotent, then
\[
n\|A^{n-1}\||\l|\leqslant2\|A\|\|B\|\|A^{n-1}\|\;(\forall n\in\Nn).
\]
Thus $n|\l|\leqslant 2\|A\|\|B\|$, which implies $\l=0$.
\end{proof}

\begin{rem}\label{xxsec2.7}
From the above Lemma \ref{xxsec2.6}, we can say that if $n\geqslant 2$, $A_1,\cdots, A_n\in\mathcal{B(X)}$ and $p_n(A_1,\cdots,A_n)\in\mathbb{C}\i$, then $p_n(A_1,\cdots,A_n)=0$.
\end{rem}

Let $\mathcal{X}$ be a Banach space, $\N$ be a nest on $\mathcal{X}$ and ${\rm Alg}\mathcal{N}$ be the nest algebra associated with $\mathcal{N}$. Then $\mathcal{B(X)}$ can be considered as an $({\rm Alg}\mathcal{N}, {\rm Alg}\mathcal{N})$-bimodule.

\begin{lem}\emph{(\cite[Lemma 2.3]{SunMa})}\label{xxsec2.8}
\begin{enumerate}
\item[{\rm (1)}] If $\{0\}\neq N\in\N$ and $T\in\mathcal{B(X)}$ satisfy $TA=AT$ {\rm (}on $N${\rm )} for all $A\in\alg\N$, then there exists $\l\in\mathbb{C}$ such that $T=\l\i$ on $N$.
\item[{\rm (2)}] $\Z_{\alg\N}(\mathcal{B(X)})=\mathbb{C}\i$.
\end{enumerate}
\end{lem}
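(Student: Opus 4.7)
The plan is to prove part (1) directly by exploiting the rank-one operators in $\alg\N$ produced by Lemma \ref{xxsec2.3}, and then deduce part (2) as the special case $N=\mathcal{X}$.

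To establish part (1), fix $\{0\}\neq N\in\N$ and $T\in\mathcal{B(X)}$ satisfying $TA=AT$ on $N$ for every $A\in\alg\N$, and introduce
\[
S:=\bigcup\{M\in\N\colon M\seq N,\ M_-\sneq N\}.
\]
Because the nest elements contained in $N$ are totally ordered by inclusion, $S$ is automatically a linear subspace of $N$. I first verify $\ol{S}=N$: if $N_-\sneq N$ one may take $M=N$ and so $S=N$, while if $N_-=N$ then $S\supseteq\bigcup_{M\sneq N,\,M\in\N}M$, whose closed linear span is $\bigvee_{M\sneq N}M=N_-=N$. Next, for each $y\in S\setminus\{0\}$ I pick $M$ with $y\in M\seq N$ and $M_-\sneq N$, select $z\in N\setminus M_-$, and use the Hahn--Banach theorem to produce $f\in (M_-)^\bot$ with $f(z)\neq 0$. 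By Lemma \ref{xxsec2.3}, $y\ot f\in\alg\N$. Evaluating the identity $T(y\ot f)(z)=(y\ot f)T(z)$ for this $z\in N$ gives
\[
f(z)\, Ty=f(Tz)\, y,
\]
so that $Ty=\mu_y\, y$ with $\mu_y:=f(Tz)/f(z)$.

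The third step is a consistency argument: for two linearly independent $y_1,y_2\in S\setminus\{0\}$, the subspace property of $S$ yields $y_1+y_2\in S$, and expanding $T(y_1+y_2)$ in two ways forces $\mu_{y_1}=\mu_{y_2}=\mu_{y_1+y_2}$. The only case needing separate inspection is $\dim S\leqslant 1$, which can happen only when $\dim N=1$; then automatically $N_-=\{0\}\sneq N$, $S=N$, and the conclusion is immediate. Hence a single scalar $\l\in\mathbb{C}$ satisfies $Ty=\l y$ for every $y\in S$; since $T$ is bounded and $\ol{S}=N$, continuity upgrades this to $T=\l\i$ on $N$. Part (2) then follows at once by taking $N=\mathcal{X}\in\N$: part (1) yields $T=\l\i$, while the reverse inclusion $\mathbb{C}\i\seq\Z_{\alg\N}(\mathcal{B(X)})$ is trivial.

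The main obstacle is guaranteeing density of $S$ in the delicate case $N_-=N$, which requires carefully unpacking the join operation and observing that $N_-=N\neq\{0\}$ forces the existence of intermediate nest elements $\{0\}\sneq M\sneq N$ whose union has dense linear span in $N$. A closely related subtlety is the choice of $M$ in the rank-one construction: applying Lemma \ref{xxsec2.3} with $M=N$ would be cleaner, but this is permissible only when $N_-\sneq N$, and one must otherwise descend to a strictly smaller nest element, with the definition of $S$ designed to handle both situations uniformly.
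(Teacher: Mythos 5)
The paper itself gives no proof of this lemma, citing \cite[Lemma 2.3]{SunMa} instead; your argument is correct and is essentially the standard one: use Lemma \ref{xxsec2.3} to produce rank-one operators $y\ot f\in\alg\N$ that force $Ty\in\mathbb{C}y$ on a dense subspace of $N$, patch the eigenvalues together by linearity, and conclude by boundedness of $T$. Your handling of the delicate case $N_-=N$ --- descending to nest elements $M\seq N$ with $M_-\sneq N$ so that one can choose $f\in(M_-)^\bot$ not annihilating all of $N$, and observing that every $M\sneq N$ qualifies so the union is dense --- is the right fix and is sound.
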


Taking into account Lemma \ref{xxsec2.6} and Lemma \ref{xxsec2.8} and using induction on $n$, it is not difficult to prove

\begin{lem}\label{xxsec2.9}
Let $\N$ be a nest on a Banach space $X$ and $L\colon\alg\N\to\mathcal{B(X)}$ be a Lie $n$-derivation. Then $L(\i)=\l\i$ for some $\l\in\mathbb{C}$. 
\end{lem}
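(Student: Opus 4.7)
The plan is to specialize the Lie $n$-derivation rule to a tuple in which one coordinate is the identity, thereby extracting a single surviving nested-commutator identity involving $L(\i)$, and then to run an induction on the commutator depth to deduce $L(\i)\in\mathbb{C}\i$ using Lemma~\ref{xxsec2.6} and Lemma~\ref{xxsec2.8}.

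First I would insert $A_2=\i$ into the defining identity. Since $[X,\i]=0$ for every $X\in\mathcal{B(X)}$, a direct calculation shows $p_n(A_1,\i,A_3,\dots,A_n)=0$, so the left-hand side of the Lie $n$-derivation rule vanishes. On the right-hand side, the $k=1$ summand carries the innermost bracket $[L(A_1),\i]=0$, and for each $k\geq 3$ the innermost bracket is still $[A_1,\i]=0$; so only the $k=2$ summand survives, yielding
\[
p_n(A_1,L(\i),A_3,\dots,A_n)=0
\]
for all $A_1,A_3,\dots,A_n\in\alg\N$. Rewriting the innermost bracket via $[A_1,L(\i)]=-[L(\i),A_1]$ converts this into
\[
p_n(L(\i),A_1,A_3,\dots,A_n)=0.
\]

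Next I would prove, by induction on $m\geq 2$, the following auxiliary claim: if $S\in\mathcal{B(X)}$ satisfies $p_m(S,B_1,\dots,B_{m-1})=0$ for all $B_1,\dots,B_{m-1}\in\alg\N$, then $S\in\mathbb{C}\i$. The base case $m=2$ reduces to $[S,B_1]=0$ for every $B_1\in\alg\N$, and Lemma~\ref{xxsec2.8}(2) finishes it. For the inductive step, rewrite
\[
p_m(S,B_1,\dots,B_{m-1}) = p_{m-1}([S,B_1],B_2,\dots,B_{m-1}),
\]
so the inductive hypothesis applied to the bounded operator $[S,B_1]\in\mathcal{B(X)}$ yields $[S,B_1]\in\mathbb{C}\i$ for each $B_1\in\alg\N$. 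Lemma~\ref{xxsec2.6} then forces $[S,B_1]=0$ for every $B_1\in\alg\N$, and a final application of Lemma~\ref{xxsec2.8}(2) gives $S\in\mathbb{C}\i$.

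Applying the auxiliary claim with $m=n$ and $S=L(\i)$ to the identity produced in the first step immediately yields $L(\i)\in\mathbb{C}\i$. The only real bookkeeping lies in that first step, namely verifying that after setting $A_2=\i$ exactly one summand on the right-hand side survives; after this observation the cascading induction built on Lemma~\ref{xxsec2.6} and Lemma~\ref{xxsec2.8}(2) is essentially forced and contains no genuinely new ideas.
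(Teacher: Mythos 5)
Your proof is correct and follows exactly the route the paper indicates (the paper gives no details, merely asserting that the lemma follows from Lemma~\ref{xxsec2.6}, Lemma~\ref{xxsec2.8} and induction on $n$): you substitute $\i$ into the Lie $n$-derivation identity to isolate $p_n(L(\i),B_1,\dots,B_{n-1})=0$, then peel off commutators one at a time using Lemma~\ref{xxsec2.6} to kill the scalar and Lemma~\ref{xxsec2.8}(2) to land in $\mathbb{C}\i$. Your write-up is a faithful and complete implementation of that sketch.
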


\section{Lie-Type Derivations of Nest Algebras}
\label{xxsec3}

Let us first state our main theorem of this article. 

\begin{theorem}\label{xxsec3.1}
Let $\mathcal{X}$ be a Banach space, $\N$ be a non-trivial nest on $\mathcal{X}$ and ${\rm Alg}\mathcal{N}$ be the nest algebra associated with $\mathcal{N}$. Then every Lie-type derivaion from $\alg\N$ into $\mathcal{B(X)}$ is of standard form. 

More precisely, $L\colon\alg\N\to\mathcal{B(X)}$ is a Lie $n$-derivation if and only if there exist a derivation $D\colon\alg\N\to\mathcal{B(X)}$ and a linear mapping $H\colon\alg\N\to\mathbb{C}\i$ vanishing on each $(n-1)$-th commutator on $\alg\N$ such that $L=D+H$. In particular, if $L$ is continuous, then $D$ and $H$ are continuous as well. 
\end{theorem}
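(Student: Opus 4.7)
The sufficiency follows at once from Proposition~\ref{xxsec2.5} combined with Lemma~\ref{xxsec2.8}(2), which says $\Z_{\alg\N}(\mathcal{B(X)})=\mathbb{C}\i$; the substance is in the necessity. My plan is to construct an explicit derivation $D\colon \alg\N\to\mathcal{B(X)}$ with the property that $H:=L-D$ takes values in $\mathbb{C}\i$. Once this is done, Proposition~\ref{xxsec2.5} forces $H$ to vanish on every $(n-1)$-th commutator, yielding the desired standard-form decomposition.

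\textbf{Case division and choice of probes.} Following the setup recalled in Section~\ref{xxsec2}, I would split the argument according to whether there exists $N\in\N\setminus\{\{0\},\mathcal{X}\}$ complemented in $\mathcal{X}$. In the complemented case, $\alg\N$ is a genuine triangular algebra with respect to an idempotent $P\in\alg\N$, so block-matrix style manipulations using $P$ and $\i-P$ are available. In the non-complemented case, peculiar to the Banach-space setting, one must use Hahn--Banach (as in the paragraph just before Lemma~\ref{xxsec2.3}) and rank one operators as surrogates for the missing idempotent. By Lemma~\ref{xxsec2.3}, rank one operators $x\ot f\in\alg\N$ are abundant, parametrized by $x\in N_+$ and $f\in N^\bot$ for $N\in\N$, and these will be the main probes throughout both cases.

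\textbf{Building $D$ and identifying $H$.} By Lemma~\ref{xxsec2.9} one already has $L(\i)=\l\i$, which will be absorbed into $H$. For each admissible rank one operator $x\ot f$, evaluating $L$ on expressions of the form $p_n(x\ot f,A_2,\ldots,A_n)$ and exploiting Lemma~\ref{xxsec2.6} together with Remark~\ref{xxsec2.7} to rule out scalar-identity commutators should pin down $L(x\ot f)$, modulo $\mathbb{C}\i$, in the quasi-spatial shape $T(x)\ot f - x\ot T^*(f)$ for a fixed linear $T$ read off from $L$. This determines a candidate $D$, realized as an inner-type derivation $A\mapsto TA-AT$ (corrected on the two diagonal corners in the complemented case). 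To conclude, I would verify that $H(A):=L(A)-D(A)$ commutes with every rank one operator in $\alg\N$; abundance of such rank ones, via Lemma~\ref{xxsec2.1} and Hahn--Banach, together with Lemma~\ref{xxsec2.8}, then forces $H(A)\in\mathbb{C}\i$ for every $A\in\alg\N$.

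\textbf{Main obstacle and continuity.} The principal technical difficulty is the non-complemented case: absent an actual idempotent in $\alg\N$, the block manipulations of the triangular setting must be emulated purely through commutator identities among carefully chosen rank one operators, and one must certify that the operator $T$ obtained by aggregating local information is well-defined, globally consistent, and bounded. Boundedness should follow from the norm identity $\|x\ot f\|=\|x\|\|f\|$ together with the isometric embedding of the adjoint map noted in Section~\ref{xxsec2}. Continuity of $L$ then passes transparently to $D$ and $H$, since each step in the construction is a finite composition of evaluations of $L$ with multiplications by fixed operators in $\mathcal{B(X)}$.
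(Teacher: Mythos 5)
Your overall template---pin down $L$ on rank-one operators in the quasi-spatial form $Tx\ot f-x\ot T^*f$ modulo $\mathbb{C}\i$, set $D=[T,\cdot\,]$, and show $H=L-D$ is central-valued, then invoke Proposition~\ref{xxsec2.5}---is exactly the paper's, but your case division is the wrong axis and leaves the hardest case uncovered. The paper splits on $\dim\mathcal{X}^\bot_-$ (and dually on whether $\{0\}_+\neq\{0\}$), not on complementation, and this is not a cosmetic difference. A continuous nest on a separable Hilbert space is \emph{entirely} complemented, so it lands in your ``easy'' complemented case, yet it satisfies $\mathcal{X}_-=\mathcal{X}$ and $\{0\}_+=\{0\}$: there is then no nonzero $f\in\mathcal{X}^\bot_-$, hence no single functional $f$ for which $x\ot f\in\alg\N$ for all $x$, and the global probes your construction of $T$ relies on simply do not exist. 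Moreover, the block-matrix manipulations you propose for the complemented case do not close the argument: the diagonal corners $P\,\alg\N\,P$ and $(\i-P)\,\alg\N\,(\i-P)$ are again infinite nest algebras, so reducing to the triangular structure only reproduces the original problem on each corner (the paper uses an idempotent decomposition only when $\dim\mathcal{X}^\bot_-=1$, where $P=x_0\ot f_0$ has rank one and one corner is $\mathbb{C}P$, which is what makes the block computation terminate). In the genuinely hard case the paper must instead build \emph{local} implementing operators $\F_{\l,f,y}$ on each $N_\l$ of a net with $\bigvee_\l N_\l=\mathcal{X}$, prove local decompositions $L(\cdot)|_{N_\l}=D_\l(\cdot)+H_\l(\cdot)$, and glue (Lemmas~\ref{xxsec3.18}--\ref{xxsec3.20} and Theorem~\ref{xxsec3.22}); your sketch gestures at ``aggregating local information'' but attributes the need for it to non-complementation, so the mechanism that would actually produce the local data is missing.

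A secondary but real error: you claim the implementing operator $T$ is bounded ``from $\|x\ot f\|=\|x\|\|f\|$ together with the isometric embedding of the adjoint.'' For a discontinuous Lie $n$-derivation $L$, the operator $Tx=(L(x\ot f)-h(x,f)\i)y$ need not be bounded, and no such norm identity forces it to be; the paper only obtains $D(A)\in\mathcal{B(X)}$ a posteriori from $D=L-H$ with $L(A)$ and $H(A)$ bounded, and asserts continuity of $D$ and $H$ only conditionally on continuity of $L$. As written, your boundedness claim would prove every Lie $n$-derivation is implemented by a bounded operator, which is not what the theorem asserts and is not available at that stage of the argument.
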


As a matter of fact, we will investigate more general cases, i.e., on certain subalgebras of $\alg\N$. For reading convenience, we have to introduce some notations. Let $\N$ be a non-trivial nest on a Banach space $\mathcal{X}$, and $\A$ be a subalgebra of $\alg\N$. Define
$$
\mathcal{X}(\A):=\{x\in \mathcal{X}\mid\exists f\in \mathcal{X}^*\sm\{0\},x\ot f\in\A\},
$$
and
$$
\mathcal{X}^\bot_-(\A):=\{f\in \mathcal{X}^\bot_-\mid\exists x\in \mathcal{X}\sm\{0\}, x\ot f\in\A\}.
$$

We put forward the following conditions.

\begin{enumerate}[fullwidth,itemindent=1em,label=($\spadesuit$\arabic*)]
\item $\mathcal{X}^\bot_-(\A)\neq\{0\}$ is a linear space.\label{spade.annih neq 0}
\item For each $f\in\mathcal{X}^\bot_-(\A)$, there exists $y\in \mathcal{X}\setminus \mathcal{X}_-$ such that $y\ot f\in\A$.\label{spade.y ot f in A}
\item The set of all rank-one operators in $\A$ contains $\{x\ot f\mid x\in \mathcal{X}(\A),f\in\mathcal{X}^\bot_-(\A)\}$.\label{spade.all rank 1 op in A}
\item For each $f_1,f_2\in\mathcal{X}^\bot_-(\A)$, $\dim \mathcal{X}(\A)\cap\ker f_i>1\,(i=1,2)$ and $\mathcal{X}(\A)\cap\ker f_1\cap\ker f_2\neq\{0\}$ hold true.\label{spade.X(A)inters kerf}
\item $\mathcal{X}(\A)=\mathcal{X}$.\label{spade.X(A)=X}
\item For any $A\in\A$, $x\in \mathcal{X}(\A)$ and $f\in\mathcal{X}^\bot_-(\A)$, $Ax\in \mathcal{X}(\A)$ and $A^*f\in\mathcal{X}^\bot_-(\A)$ hold true.\label{spade.Ax and A^*f}
\end{enumerate}

It is not difficult to figure out that not all subalgebras satisfy the above conditions, while there are certain subalgebras satisfying all these conditions (e.g. $\alg\N$ itself). \ref{spade.all rank 1 op in A} implies that $\mathcal{X}(\A)$ and $\mathcal{X}^\bot_-(\A)$ are normed linear spaces which inherite the norm of $\mathcal{X}$ and that of $\mathcal{X}^*$, respectively. The condition \ref{spade.X(A)inters kerf} implies that the dimension $\mathcal{X}$ is greater than $2$. As you see, we will address the standard form question of Lie-type derivations of a subalgebra $\A$ satisfying some of the above conditions in the next several subsections.

By Proposition \ref{xxsec2.5} and Lemma \ref{xxsec2.8}, the sufficiency of our main Theorem \ref{xxsec3.1} is clear. We shall give the proof of its necessity by a systemic analysis for several different cases.

\subsection{The case of $\dim\mathcal{X}^\bot_-=1$}
\ \\

Under this case, one can loosen the condition of Lemma~\ref{xxsec2.8} slightly as follows.

\begin{lem}\label{xxsec3.2}
Let $\N$ be a nest on a Banach space $\mathcal{X}$ with $\dim\mathcal{X}^\bot_-=1$. If $\A$ is a subalgebra of $\alg\N$ satisfying the conditions \emph{\ref{spade.annih neq 0}} and \emph{\ref{spade.X(A)=X}}, then $\Z_\A(\mathcal{B(X)})=\mathbb{C}\i$.
\end{lem}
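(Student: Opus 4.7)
The plan is to show, for the nontrivial direction, that every $T\in\Z_\A(\mathcal{B(X)})$ must leave each one-dimensional subspace of $\mathcal{X}$ invariant, and then to deduce from this that $T$ is a scalar multiple of $\i$. The reverse inclusion $\mathbb{C}\i\seq\Z_\A(\mathcal{B(X)})$ is immediate from $\i\in\Z(\mathcal{B(X)})$, so I focus on the other one.

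Fix $T\in\Z_\A(\mathcal{B(X)})$. By hypothesis \ref{spade.X(A)=X}, every $x\in\mathcal{X}$ lies in $\mathcal{X}(\A)$, so there exists a nonzero $g\in\mathcal{X}^*$ with the rank-one operator $x\ot g\in\A$. Expanding the commutation relation $T(x\ot g)=(x\ot g)T$ yields
\[
Tx\ot g=x\ot T^*g\quad\text{in }\mathcal{B(X)}.
\]
For $x\neq 0$, I pick any $z\in\mathcal{X}$ with $g(z)\neq 0$ and evaluate both sides at $z$ to get $g(z)\,Tx=g(Tz)\,x$, i.e., $Tx=\bigl(g(Tz)/g(z)\bigr)\,x\in\mathbb{C}x$. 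Thus there is a scalar $\mu(x)\in\mathbb{C}$ with $Tx=\mu(x)x$ for every $x\in\mathcal{X}\sm\{0\}$.

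For any two linearly independent $x,y\in\mathcal{X}$, applying the previous step to $x+y$ yields $T(x+y)=\mu(x+y)(x+y)$, while linearity of $T$ gives $T(x+y)=Tx+Ty=\mu(x)x+\mu(y)y$; comparing the two expressions and invoking linear independence forces $\mu(x)=\mu(y)=\mu(x+y)$. Hence $\mu$ is a single constant $\l\in\mathbb{C}$ and $T=\l\i$. The hypothesis $\dim\Xb=1$ entails $\mathcal{X}_-\sneq\mathcal{X}$, which keeps us in the regime $\dim\mathcal{X}\geqslant 2$ whenever $\mathcal{X}_-\neq\{0\}$, so that the last step is not vacuous; the remaining marginal possibility $\dim\mathcal{X}=1$ collapses $\mathcal{B(X)}$ itself to $\mathbb{C}\i$ and the claim is trivial. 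I do not expect any serious obstacle: the entire argument reduces to the single rank-one identity displayed above, and condition \ref{spade.X(A)=X} is tailor-made to supply, for every $x\in\mathcal{X}$, a witness $x\ot g\in\A$ with which $T$ must commute. The auxiliary condition \ref{spade.annih neq 0} together with $\dim\Xb=1$ does not appear strictly necessary for this particular lemma; it presumably figures in order to harmonize the hypotheses with those of the subsequent subsections.
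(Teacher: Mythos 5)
Your argument is correct and rests on the same mechanism as the paper's proof: an element of $\Z_\A(\mathcal{B(X)})$ must commute with the rank-one operators available in $\A$, and evaluating the resulting operator identity shows that it preserves every line. The difference lies in how the rank-one operators are produced. The paper uses $\dim\Xb=1$ together with $(\spadesuit 1)$ to single out one functional $f\in\Xb$ with $f(y)=1$ for some $y\notin\Xm$ and to claim $x\ot f\in\A$ for \emph{every} $x$; the identity $Ax=f(Ay)x$ then exhibits a single scalar $f(Ay)$ valid for all $x$ at once, so $A\in\mathbb{C}\i$ drops out immediately. You instead let the functional $g$ vary with $x$ — which is exactly what $(\spadesuit 5)$ guarantees — obtain $Tx=\mu(x)x$ with an a priori $x$-dependent eigenvalue, and then need the additional (standard, and correctly executed) step that a linear map fixing every one-dimensional subspace is scalar. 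The payoff of your route is that it genuinely uses only $(\spadesuit 5)$: your closing observation that $(\spadesuit 1)$ and $\dim\Xb=1$ play no role in this particular lemma is accurate, and your version even sidesteps the question of why the \emph{same} $f$ should pair with every $x$ inside $\A$, which is the one point the paper's shorter argument leans on. Both proofs are complete; yours is marginally more general, the paper's marginally shorter.
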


\begin{proof}
By $\dim\mathcal{X}^\bot_-=1$ we know that $\mathcal{X}_-\sneq \mathcal{X}$. The condition \ref{spade.annih neq 0} implies that $\mathcal{X}^\bot_-=\mathcal{X}^\bot_-(\A)$. For any $y\in \mathcal{X}\sm\mathcal{X}_-$, by Hahn-Banach theorem (pick $\wt f\in(\mathcal{X}/\mathcal{X}_-)^*$ such that $\wt f(y+\mathcal{X}_-)=1$, and let $f=\wt f\circ \theta$, where $\theta$ is the quotient mapping), it follows that there exists $f\in\mathcal{X}^\bot_-$ such that $f(y)=1$. Since $\dim\mathcal{X}^\bot_-(\A)=1$ and $\mathcal{X}(\A)=\mathcal{X}$, we get $x\ot f\in\A$ $(\forall x\in \mathcal{X})$. Thus for any $A\in\mathcal{Z_A}(\mathcal{B(X)})$, we have $A(x\ot f)y=(x\ot f)Ay$. That is, $Ax=f(Ay)x$. By the arbitrariness of $x$, we assert that $A\in\mathbb{C}\i$.
\end{proof}

\begin{theorem}\label{xxsec3.3}
Let $\N$ be a non-trivial nest of a Banach space $\mathcal{X}$ with $\dim\mathcal{X}^\bot_-=1$. Suppose that $\A$ is a subalgebra of $\alg\N$ satisfying \emph{\ref{spade.annih neq 0}, \ref{spade.y ot f in A}}, $\mathcal{X}_-\seq \mathcal{X}(\A)$ and $\i\in\A$. Then each Lie-type derivation from ${\rm Alg}\mathcal{N}$ into $\mathcal{B(X)}$ is of standard form. Precisely speaking, for any Lie $n$-derivation $L\colon\A\to\mathcal{B(X)}$, there exists a derivation $D\colon \A\to\mathcal{B(X)}$ and a linear mapping $H\colon\A\to\mathbb{C}\i$ vanishing on all $(n-1)$-th commutators of $\A$ such that $L=D+H$. Moreover, if $L$ is continuous, then $D$ and $H$ are continuous.
\end{theorem}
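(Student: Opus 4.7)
The plan is to exploit the codimension-one structure $\mathcal{X}^\bot_-=\mathbb{C}f_0$ to manufacture a rank-one idempotent inside $\mathcal{A}$ and then adapt the Peirce-decomposition machinery for Lie-type derivations on triangular algebras. First I would observe that $\dim\mathcal{X}^\bot_-=1$ combined with \ref{spade.annih neq 0} gives $\mathcal{X}^\bot_-(\mathcal{A})=\mathcal{X}^\bot_-=\mathbb{C}f_0$, and closedness forces $\mathcal{X}_-=\ker f_0$. By \ref{spade.y ot f in A} there is $y_0\in\mathcal{X}\setminus\mathcal{X}_-$ with $y_0\otimes f_0\in\mathcal{A}$; since $f_0(y_0)\ne 0$ one can rescale to obtain a rank-one idempotent $P:=y_0\otimes f_0\in\mathcal{A}$. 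Because $I\in\mathcal{A}$, the complementary projection $Q=I-P$ also lies in $\mathcal{A}$, and as every $PAP$ is a scalar multiple of $P$ one obtains the Peirce decomposition
\[
\mathcal{A}=\mathbb{C}P\oplus P\mathcal{A}Q\oplus Q\mathcal{A}P\oplus Q\mathcal{A}Q,\qquad P\mathcal{A}P=\mathbb{C}P,
\]
so $\mathcal{A}$ behaves like a triangular algebra whose $(1,1)$-block is one dimensional.

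Next I would normalise $L$ by an inner derivation. Setting $T_0=QL(P)P-PL(P)Q$ and replacing $L$ by $L-\mathrm{ad}(T_0)$ (still a Lie $n$-derivation by Proposition \ref{xxsec2.4}) kills both off-diagonal parts of $L(P)$, leaving $L(P)=\alpha P+QL(P)Q$ for some $\alpha\in\mathbb{C}$. From there I would test the Lie $n$-derivation identity against $p_n(P,X,P,\dots,P)$, using the elementary collapses $[P,X]=PXQ-QXP$ and $[[[P,X],P],P]=[P,X]$, for $X$ running over each Peirce block. This will force $L$ to respect the Peirce grading up to scalar multiples of $I$: the off-diagonal blocks $P\mathcal{A}Q$ and $Q\mathcal{A}P$ are sent into the corresponding off-diagonal blocks of $\mathcal{B(X)}$, while the diagonal blocks are sent into their counterparts plus $\mathbb{C}I$. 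A parallel analysis of $p_n(A,P,\dots,P)$ for $A\in Q\mathcal{A}Q$ would then reduce the restriction of $L$ to $Q\mathcal{A}Q$ to a Lie $n$-derivation on that subalgebra, modulo a central correction.

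Once the restrictions are pinned down, each off-diagonal piece can be realised as an inner derivation $\mathrm{ad}(S)$ for a suitable $S\in\mathcal{B(X)}$ (determined by evaluating $L$ on a single rank-one operator), and the diagonal block $Q\mathcal{A}Q$ contributes a further associative derivation; together these assemble a derivation $D\colon\mathcal{A}\to\mathcal{B(X)}$. The residue $H:=L-D$ will be $\mathbb{C}$-linear with image in $\mathbb{C}I$, and Remark \ref{xxsec2.7} then guarantees that $H$ vanishes on every $(n-1)$-th commutator, yielding the standard form. Continuity is routine: the Peirce projections, the inner derivations, and the scalar $\alpha$ are all bounded functionals of $L$, so if $L$ is continuous then so are $D$ and $H$. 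The main obstacle is the combinatorial bookkeeping in the Peirce analysis: each outer $[\cdot,P]$ inside $p_n$ generates cross-block correction terms, and controlling the $n$-fold sum in the Lie $n$-derivation identity so that these corrections either cancel or fall into $\mathbb{C}I$ will require a careful parity-based induction on $n$; a secondary subtlety is ensuring that $Q\mathcal{A}Q$, viewed as acting on $\mathcal{X}_-$, retains enough rank-one elements to run the smaller-block argument, which is precisely where the hypothesis $\mathcal{X}_-\subseteq\mathcal{X}(\mathcal{A})$ enters.
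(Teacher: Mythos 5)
Your proposal follows essentially the same route as the paper's proof: the rank-one idempotent $P=x_0\otimes f_0$ with $Q=I-P$, the Peirce decomposition, normalisation of $L$ by the inner derivation $\mathrm{ad}(QL(P)P-PL(P)Q)$ (which coincides with the paper's $\mathrm{ad}(PL(Q)Q-QL(Q)P)$ since $L(I)\in\mathbb{C}I$), a block-by-block analysis of the Lie $n$-derivation identity applied to $p_n(\cdot,P,\dots,P)$, and assembly of $D$ and $H$ with Proposition \ref{xxsec2.5} handling the vanishing of $H$ on commutators. Two small points of comparison: since $f_0\in\mathcal{X}^\bot_-$ and every $A\in\mathcal{A}$ preserves $\mathcal{X}_-=\ker f_0=Q\mathcal{X}$, one actually has $P\mathcal{A}Q=0$, so only three Peirce blocks survive; and in the final step the corrected map restricted to $Q\mathcal{A}P$ need not be inner --- the paper instead verifies the Leibniz rule directly on products of block elements, which is all that is needed.
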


\begin{rem}\label{xxsec3.4}
The conditions what the subalgebra $\A$ satisfies amount to saying \ref{spade.annih neq 0}, $\mathcal{X}(\A)=\mathcal{X}$ and $\i\in\A$.
\end{rem}

\begin{proof}
By $\dim\mathcal{X}^\bot_-=1$ and \ref{spade.annih neq 0} it follows that $\mathcal{X}^\bot_-=\mathcal{X}^\bot_-(\A)$. If $\cdim \mathcal{X}_->1$, then there are linear independent elements $x_1+\mathcal{X}_-$ and $x_2+\mathcal{X}_-$ in $\mathcal{X}/\mathcal{X}_-$. By an analogous proof of Lemma~\ref{xxsec2.1}, one can construct $f_1,f_2\in\mathcal{X}^\bot_-$ satisfying $f_i(x_j)=\delta_{ij}$, where $\delta_{ij}$ is the Kronecker sign. Then $f_1, f_2$ are linear independent, which contradicts with $\dim\mathcal{X}^\bot_-=1$. Thus we assert $\cdim \mathcal{X}_-=1$. Basing on this fact, we know that for any $f\in\mathcal{X}^\bot_-\sm\{0\}$, $\ker f=\mathcal{X}_-$ holds true. Fixing certain $f_0\in\mathcal{X}^\bot_-(\A)\setminus\{0\}=\mathcal{X}^\bot_-\setminus\{0\}$, by \ref{spade.y ot f in A}, there exists $y\in \mathcal{X}\setminus \mathcal{X}^-$ such that $y\ot f_0\in\A$. Denote $M=\lspan\{y\}$, and then $\mathcal{X}=\mathcal{X}_-\oplus M$. Indeed, since $\s\colon M\to \mathcal{X}/\mathcal{X}_-,\,y\mapsto y+\mathcal{X}_-$ is an isomorphism, and then for any $f\in\mathcal{X}^\bot_-\sm\{0\}$, $\s_f\colon \mathcal{X}/\mathcal{X}_-\to\mathbb{C},\,y+\mathcal{X}_-\mapsto f(y)$ is also an isomorphism by fundamental theorem of homomorphism. We therefore have  $M\overset{\s_f\circ\s}{\cong}\mathbb{C}$. Let us take $x\in \mathcal{X}$ and $f\in\mathcal{X}^\bot_-\sm\{0\}$. Then there exists $y\in M$ with $f(y)=\s_f\circ\s(y)=f(x)$. Henceforth $x-y\in\ker f=\mathcal{X}_-$, i.e., $\mathcal{X}\seq\mathcal{X}_-+M$.

Let us choose $x_0\in M$ such that $f_0(x_0)=1$, and define $P=x_0\ot f_0\in\A$ and set $Q=\i-P$. Then $P$ and $Q$ are idempotents satisfying $PQ=QP=0$, $P\mathcal{X}=M$ and $Q\mathcal{X}=\mathcal{X}_-$. We get the Peirce decomposition $\mathcal{B(X)}=P\mathcal{B(X)}P\oplus P\mathcal{B(X)}Q\oplus Q\mathcal{B(X)}P\oplus Q\mathcal{B(X)}Q$. For any $A\in\A$, since $\dim\mathcal{X}^\bot_-=1$ and $A^*f_0\in\mathcal{X}^\bot_-=\mathcal{X}^\bot_-(\A)$, there exists $\l_A\in\mathbb{C}$ such that $A^*f_0=\l_Af_0$. Therefore
\begin{align*}
PAQ=(x_0\ot f_0)AQ=(x_0\ot A^*f_0)Q=\l_Ax_0\ot f_0Q=\l_APQ=0.
\end{align*}
Thus $\A=\A_{11}\oplus\A_{21}\oplus\A_{22}$, where $\A_{11}=P\A P$, $\A_{21}=Q\A P$, $\A_{22}=Q\A Q$.

We assert that there is a derivation $D\colon\A\to\mathcal{B(X)}$ such that the Lie $n$-derivation $\wt L=L-D$ satisfy the following conditions: if $A\in\A_{ii}\,(i=1,2)$, then $\wt L(A)-H_{ii}(A)\i\in\B_{ii}$, where $H_{ii}\colon\A_{ii}\to\mathbb{C}$ is a linear mapping, and $\B_{11}=P\mathcal{B(X)}P$, $\B_{22}=Q\mathcal{B(X)}Q$; if $A\in\A_{21}$, then $\wt L(A)\in\B_{21}:=Q\mathcal{B(X)}P$. Let us prove this assertion in two cases.

\emph{Case 1: $n$ is odd.} For an arbitrary $A\in\A_{21}$, we have $AQ=0$ and $QA=A$. A direct computation shows that
$$
\begin{aligned}
L(A)=&L(p_n(A,Q,\cdots,Q))\\
=&p_n(L(A),Q,\cdots,Q)+\sum_{i=2}^{n-1}p_n(A,\cdots,Q,\mathop{L(Q)} _{\substack{\uparrow\\ i\text{-th}}},Q, \cdots,Q)\\
&+p_n(A,\underbrace{Q,\cdots,Q}_{\text{odd numbers}}, L(Q))\\
=&QL(A)P+PL(A)Q-(n-2)(AL(Q)-AL(Q)Q-QL(Q)A)\\
&-AL(Q)+L(Q)A.
\end{aligned}\eqno{(3.1)}
$$
Multiplying $Q$ (resp. $P$) from the left (resp. right) side of (3.1), we obtain
\[
QL(Q)A=AL(Q)P.
\]
For any $x\in\mathcal{X}_-$, since $\mathcal{X}_-\seq \mathcal{X}(\A)$ and $\mathcal{X}^\bot_-(\A)=\mathcal{X}^\bot_-$ is $1$-dimensional, we thus get $A_0=x\ot f_0\in\A$. Obviously, $A_0=QA_0P\in\A_{21}$ and hence
$$
QL(Q)A_0=A_0L(Q)P=(x\ot f_0)L(Q)(x_0\ot f_0)=f_0(L(Q)x_0)(x\ot f_0).\eqno{(3.2)}
$$
Using (3.2) to act on $x_0$, we have
$$
QL(Q)x=f_0(L(Q)x_0)x.
$$
The arbitrariness of $x\in\Xm$ implies that
$$
QL(Q)Q=f_0(L(Q)x_0)Q. \eqno{(3.3)}
$$
A direct verification gives 
$$
PL(Q)P=(x_0\ot f_0)L(Q)P=f_0(L(Q)x_0)P. \eqno{(3.4)}
$$
Combining (3.3) with (3.4) we obtain
$$
QL(Q)Q+PL(Q)P=f_0(L(Q)x_0)\i.
$$
Let us write $T=PL(Q)Q-QL(Q)P$, and define $D_T(\cdot)=[T,\cdot\;]$. Then $D_T$ is a (continuous) derivation of $\A$ satisfying the condition
\[
L(Q)=D_T(Q)+f_0(L(Q)x_0)\i.
\]
Let us put $\wt L=L-D_T$. Then $\wt L$ is a Lie $n$-derivation on $\A$ and $\wt L(Q)=f_0(L(Q)x_0)\i$. A straightforward computation yields that
\begin{align*}
\wt L(A)=&\wt L(p_n(A,Q,\cdots,Q))\\
=&p_n(\wt L(A),Q,\cdots, Q)+p_n(A,Q,\cdots,Q,\wt L(Q))\\
=&Q\wt L(A)P+P\wt L(A)Q.
\end{align*}
For any $A'\in\A_{21}$ and $B\in\A$, we get
\begin{align*}
0=&\wt L(p_n(A,A',B,\cdots,B))\\
=&p_n(\wt L(A),A',B,\cdots,B)+p_n(A,\wt L(A'),B,\cdots,B)\\
=&p_{n-1}([\wt L(A),A']+[A,\wt L(A')], B,\cdots,B).
\end{align*}
By Lemma~\ref{xxsec2.6}, Lemma~\ref{xxsec3.2} and a recursive computation, we arrive at $[\wt L(A), A']+[A,\wt L(A')]=\l\i$, where $\l\in\mathbb{C}$. Thus
\begin{align*}
[Q\wt L(A)P+P\wt L(A)Q,A']=&[\wt L(A),A']\\
=&\l\i-[A,\wt L(A')]\\
=&\l\i-p_n(-A,Q,\cdots, Q,\wt L(A'))\\
=&\l\i-\wt L(p_n(-A,Q,\cdots,Q,A'))+p_n(\wt L(-A),Q,\cdots,Q,A')\\
=&\l\i-\wt L(p_2(A,A'))+p_3(Q,\wt L(A),A')\\
=&\l\i+p_3(Q,\wt L(A),A')\\
=&\l\i+p_3(Q,Q\wt L(A)P+P\wt L(A)Q,A')\\
=&\l\i+[Q\wt L(A)P-P\wt L(A)Q,A'].
\end{align*}
We therefore have $[P\wt L(A)Q, A']=\frac{1}{2}\l\i$. By Lemma~\ref{xxsec2.6}, it follows that $[P\wt L(A)Q, A']=0$. Multiplying by $P$ on the left side, we get $P\wt L(A)QA'=0$. In particular, $P\wt L(A)QA_0=0$. Considering its action on $x_0$, we obtain $P\wt L(A)Qx=0$. Note that $QM=\{0\}$, and thus $P\wt L(A)Q=0$. And hence $\wt L(A)=Q\wt L(A)P\in\B_{21}$.

For an arbitrary $A\in\A_{22}$, we have
\begin{align*}
0=\wt L(p_n(A,Q,\cdots,Q))=p_n(\wt L(A),Q,\cdots,Q)=Q\wt L(A)P+P\wt L(A)Q.
\end{align*}
So $0=Q\wt L(A)P=P\wt L(A)Q$. This gives that $\wt L(A)=Q\wt L(A)Q+P\wt L(A)P$. For any $B\in\A_{11}$ and $A'\in\A$, we get
\[
0=\wt L(p_n(A,B,A',\cdots,A'))=p_{n-1}([\wt L(A),B]+[A,\wt L(B)],A',\cdots,A').
\]
Taking into account Lemma~\ref{xxsec2.6} and Lemma~\ref{xxsec3.2} and using a recursive computation, we conclude that $[\wt L(A),B]+[A,\wt L(B)]=\m\i$, where $\m\in\mathbb{C}$. Since $B\in\A_{11}$, $[\wt L(A),B]=[P\wt L(A)P,B]$. Thus we obtain
\[
[P\wt L(A)P,B]=P([P\wt L(A)P,B]+[A,\wt L(B)])P=\m P.
\]
Looking on the above equation as an operator equation over $P\mathcal{X}=M$ and using Lemma \ref{xxsec2.6}, we see that $\m=0$. In particular, take $B=x\ot f_0\in\A_{11}$, where $x\in M$. Considering the action of $[P\wt L(A)P,x\ot f_0]=0$ on $x_0$, we get
\[
P\wt L(A)P=H_{22}(A)P,
\]
where $H_{22}(A)=f_0(P\wt L(A)x_0)\i\in\mathbb{C}\i$. The preceding equality shows the linearity and boundedness (if $L$ is bounded) of $H_{22}(A)$ for all $A\in\A_{22}$. Moreover, we have
\begin{align*}
\wt L(A)-H_{22}(A)=&Q\wt L(A)Q+P\wt L(A)P-H_{22}(A)(P+Q)\\
=&Q\wt L(A)Q-H_{22}(A)Q\in\B_{22}.
\end{align*}
By an analogous manner, one can prove that for any $A\in\A_{11}$, there exists a linear operator (and bounded if $L$ is bounded) $H_{11}\colon\A_{11}\to\mathbb{C}\i$ such that $\wt L(A)-H_{11}(A)\in\B_{11}$.

\indent\emph{Case 2: $n$ is even.} For an arbitrary $A\in\A_{21}$, a direct calculation shows that
$$
\begin{aligned}
-L(A)=&L(p_n(A,Q,\cdots,Q))\\
=&p_n(L(A),Q,\cdots,Q)+\sum_{i=2}^{n-1}p_n(A,\cdots,Q,\mathop{L(Q)} _{\substack{\uparrow\\ i\text{-th}}},Q, \cdots,Q)\\
&+p_n(A,\underbrace{Q,\cdots,Q}_{\text{even numbers}}, L(Q))\\
=&p_2(L(A),Q)-(n-2)p_3(A,L(Q),Q)+[A,L(Q)]\\
=&L(A)Q-QL(A)-(n-2)(AL(Q)Q-AL(Q)+QL(Q)A)\\
&+AL(Q)-L(Q)A.
\end{aligned}\eqno{(3.5)}
$$
Multiplying by $Q$ on the left side of (3.5) and by $P$ on the right side of (3.5), we obtain
\[
QL(Q)A=AL(Q)P.
\]
Adopting the same discussion as in \emph{Case 1}, we have
\[
L(Q)=[T,Q]=f_0(L(Q)x_0)\i.
\]
Denote $D_T(\cdot)=[T,\cdot\;]$ and $\wt L=L-D_T$. Then $\wt L(Q)=f_0(L(Q)x_0)\i$. Thus 
\begin{align*}
-\wt L(A)=&\wt L(p_n(A,Q\cdots,Q))\\
=&p_n(\wt L(A),Q,\cdots,Q)\\
=&p_2(\wt L(A),Q)\\
=&\wt L(A)Q-Q\wt L(A)\\
=&P\wt L(A)Q-Q\wt L(A)P.
\end{align*}
Therefore $\wt L(A)=Q\wt L(A)P-P\wt L(A)Q$. For an arbitrary $A'\in\A_{21}$ and $B\in\A$, a straightforward computation shows that
\begin{align*}
0=&\wt L(p_n(A,A',B,\cdots,B))\\
=&p_n(\wt L(A),A',B,\cdots,B)+p_n(A,\wt L(A'),B,\cdots,B)\\
=&p_{n-1}([\wt L(A),A']+[A,\wt L(A')],B,\cdots,B).
\end{align*}
By Lemma~\ref{xxsec2.6}, Lemma~\ref{xxsec3.2} and a recursive computation, we conclude
$$
[\wt L(A),A']+[A,\wt L(A')]=\l\i, \eqno{(3.6)}
$$
where $\l\in\mathbb{C}$. And hence
\begin{align*}
[\wt L(A),A']=&[Q\wt L(A)P-P\wt L(A)Q,A']\\
=&\l\i-[A,\wt L(A')]\\
=&\l\i-[p_{n-1}(A,Q,\cdots,Q),\wt L(A')]\\
=&\l\i-\wt L(p_n(A,Q,\cdots,Q,A'))+p_n(\wt L(A),Q,\cdots,Q,A')\\
=&\l\i-p_2(A,A')+[p_3(\wt L(A),Q,Q),A']\\
=&\l\i+[-P\wt L(A)Q+Q\wt L(A)P,A'].
\end{align*}
This forces $\l=0$. By (3.6) we get
$$
-P\wt L(A)A'+A'\wt L(A)Q+A\wt L(A')-\wt L(A')A=0.\eqno{(3.7)}
$$
Multiplying by $Q$ on the right side of (3.7), we obtain $(\wt L(A)A'+\wt L(A')A)Q=0$. Multiplying by $P$ on the left side of (3.7), we have $P(\wt L(A)A'+\wt L(A')A)=0$. Let us put $S=\wt L(A)A'+\wt L(A')A$. Then we see that 
\[
SQ=0,\;PS=0.
\]
On the other hand, since $\wt L(A')=Q\wt L(A')P-P\wt L(A')Q$, we by (3.6) arrive at
\[
-P\wt L(A)A'+A'\wt L(A)Q-A\wt L(A')Q+P\wt L(A')A=0.
\]
Thus
\[
\B_{11}\ni P(-\wt L(A)A'+\wt L(A')A)P=Q(A\wt L(A')-A'\wt L(A))Q\in\B_{22}.
\]
Therefore $P\wt L(A)A'=P\wt L(A')A$ and $A\wt L(A')Q=A'\wt L(A)Q$. Applying the fact $PS=0$ yields that $P\wt L(A)A'=0$. In particular, for $A'=x\ot f_0\in\A_{21}$, where $x\in\Xm$, the relation
\[
0=P\wt L(A)A'=P\wt L(A)Q(x\ot f_0)
\]
holds true. Considering its action on $x_0$, we observe that $P\wt L(A)Qx=0$. Since $QM=0$, we get $P\wt L(A)Q=0$ and hence $\wt L(A)=Q\wt L(A)P\in\B_{21}$.

\indent For an arbitrary $A\in\A_{22}$, a direct calculation shows that
$$
\begin{aligned}
0=&\wt L(p_n(A,Q,\cdots,Q))\\
=&p_n(\wt L(A),Q,\cdots,Q)\\
=&p_2(\wt L(A),Q)\\
=&P\wt L(A)Q-Q\wt L(A)P.
\end{aligned}
$$
This gives $Q\wt L(A)P=P\wt L(A)Q=0$. Thus $\wt L(A)=Q\wt L(A)Q+P\wt L(A)P$. The remainder proving process and the discussion for $A\in\A_{11}$ are identical with the counterparts of \emph{Case 1}, we do not repeat them here. Unitl now we complete the proof of the assertion. 

We now come back and continue to prove this theorem. For an arbitrary $A=A_1+A_2+A_3\in\A=\A_{11}\oplus \A_{21}\oplus\A_{22}$, we establish a mapping 
$$
\begin{aligned}
H\colon\A&\to\mathbb{C}\i\\
A&\mapsto (H_{11}(A_1)+H_{22}(A_3))\i.
\end{aligned}
$$
The linearity of $H_{ii}\,(i=1,2)$ implies that $H$ is linear. If $L$ is continuous, by properties of direct product, it is not difficult to verify the continuity of $H$. Define a mapping $\wt D\colon\A\to\mathcal{B(X)}$ via the relation $\wt D=\wt L-H$. Then $\wt D$ is a linear mapping such that $\wt D(A_1)\in\B_{11}$, $\wt D(A_2)\in\B_{21}$ and $\wt D(A_3)\in\B_{22}$.

We next illustrate that $\wt D$ is a derivation. In light of the fact 
\[
\A_{21}\ni A_3A_2=[A_3,A_2]=[[A_2,Q],A_3]=(-1)^{n-1}p_n(A_2,Q,\cdots,Q,A_3),
\]
we see that
\begin{align*}
\wt D(A_3A_2)=&\wt L(A_3A_2)\\
=&(-1)^{n-1}(p_n(\wt L(A_2),Q,\cdots,Q,A_3)+p_n(A_2,Q,\cdots,Q,\wt L(A_3)))\\
=&p_2(A_3,\wt L(A_2))+p_2(\wt L(A_3),A_2)\\
=&p_2(A_3,\wt D(A_2))+p_2(\wt D(A_3),A_2)\\
=&A_3\wt D(A_2)+\wt D(A_3)A_2.
\end{align*}
For an arbitrary $A_3'\in\A_{22}$, it follows that
$$
\wt D(A_3A_3'A_2)=\wt D((A_3A_3')A_2)=\wt D(A_3A_3')A_2+A_3A_3'\wt D(A_2).\eqno{(3.8)}
$$
On the other hand, we have
$$
\wt D(A_3A_3'A_2)=\wt D(A_3(A_3'A_2))=\wt D(A_3)A_3'A_2+A_3\wt D(A_3')A_2+A_3A_3'\wt D(A_2).\eqno{(3.9)}
$$
Comparing the relations (3.8) with (3.9) yields
$$
\wt D(A_3A_3')A_2=(\wt D(A_3)A_3'+A_3\wt D(A_3'))A_2.\eqno{(3.10)}
$$
In particular, take $A_2=x\ot f_0$, where $x\in\Xm$. Using (3.10) to act on  $x_0$, we obtain
\[
\wt D(A_3A_3')x=(\wt D(A_3)A_3'+A_3\wt D(A_3'))x.
\]
In view of the fact $QM=\{0\}$, we know that
\[
\wt D(A_3A_3')=\wt D(A_3)A_3'+A_3\wt D(A_3').
\]
For the case of
\[
\A_{21}\ni A_2A_1=[A_2,A_1]=(-1)^np_n(A_2,Q,\cdots,Q,A_1), 
\]
its discussion is parallel with that of case $A_3A_2$, and for the case of $A_1A_1'$ (where $A_1'\in\A_{11}$), its proving process is totally similar to the proof of the case $A_3A_3'$. 

For arbitrary elements $A=A_1+A_2+A_3$, $B=B_1+B_2+B_3\in\A=\A_{11}\oplus\A_{21}\oplus\A_{22}$, we eventually get
$$
\begin{aligned}
\wt D(AB)&=\wt D(A_1B_1+A_2B_1+A_3B_2+A_3B_3)\\
&=\wt D(A)B+A\wt D(B).
\end{aligned}
$$
This shows that $\wt D$ is a derivation. Let $D=\wt D+D_T$ ($D_T$ is the derivation established in the previous assertion). Then $D$ is a (continuous, if $L$ is continuous) derivation, and has the decomposition
\[
L=\wt L+D_T=\wt D+H+D_T=D+H.
\]
By Proposition~\ref{xxsec2.5}, we know that $H\colon\A\to\mathbb{C}\i$ vanishes on each $(n-1)$-th commutator on $\A$.
\end{proof}

In particular, if $\mathcal{X}$ is a $2$-dimensional Banach space, then a consequence of Theorem \ref{xxsec3.3} is

\begin{coro}\label{xxsec3.5}
Suppose that $\dim \mathcal{X}=2$ and that $\N=\{\{0\},N, \mathcal{X}\}$ is a non-trivial nest on $\mathcal{X}$. Then each Lie-type derivation from ${\rm Alg}\mathcal{N}$ into $\mathcal{B(X)}$ is of standard form. Precisely speaking, for any Lie $n$-derivation $L\colon{\rm Alg}\mathcal{N}\to\mathcal{B(X)}$, there exists a derivation $D\colon {\rm Alg}\mathcal{N}\to\mathcal{B(X)}$ and a linear mapping $H\colon{\rm Alg}\N\to\mathbb{C}\i$ vanishing on all $(n-1)$-th commutators of ${\rm Alg}\N$ such that $L=D+H$. Moreover, if $L$ is continuous, then $D$ and $H$ are both continuous.
\end{coro}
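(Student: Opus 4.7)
The plan is to derive Corollary \ref{xxsec3.5} as a direct specialization of Theorem \ref{xxsec3.3} applied to $\mathcal{A}={\rm Alg}\mathcal{N}$. First I would compute the relevant invariants of the nest. Since $\mathcal{N}=\{\{0\},N,\mathcal{X}\}$ is non-trivial and $\dim\mathcal{X}=2$, the subspace $N$ is forced to be $1$-dimensional. The only proper element of $\mathcal{N}$ strictly below $\mathcal{X}$ (other than $\{0\}$) is $N$, so by definition $\mathcal{X}_-=N$. Consequently $\mathcal{X}/\mathcal{X}_-$ is $1$-dimensional, and under the Hahn-Banach identification $\mathcal{X}^\bot_-=N^\bot\cong(\mathcal{X}/N)^*$ we conclude $\dim\mathcal{X}^\bot_-=1$. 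This supplies the standing hypothesis of Theorem \ref{xxsec3.3}.

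Next I would verify the remaining hypotheses of Theorem \ref{xxsec3.3}. By Remark \ref{xxsec3.4}, this reduces to checking \ref{spade.annih neq 0}, $\mathcal{X}(\mathcal{A})=\mathcal{X}$ and $\i\in\mathcal{A}$ for $\mathcal{A}={\rm Alg}\mathcal{N}$. Applying Lemma \ref{xxsec2.3} with the choice $M=\mathcal{X}$ (so that $M_-=\mathcal{X}_-$) shows that for any $x\in\mathcal{X}$ and any $f\in \mathcal{X}^\bot_-$ the rank-one operator $x\ot f$ lies in ${\rm Alg}\mathcal{N}$. Choosing $f\in \mathcal{X}^\bot_-\sm\{0\}$ and letting $x$ vary, this simultaneously yields $\mathcal{X}^\bot_-(\mathcal{A})=\mathcal{X}^\bot_-\neq\{0\}$ (a one-dimensional, hence linear, subspace of $\mathcal{X}^*$) and $\mathcal{X}(\mathcal{A})=\mathcal{X}$. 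The condition $\i\in{\rm Alg}\mathcal{N}$ is automatic.

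Having verified all the hypotheses, Theorem \ref{xxsec3.3} applies verbatim and produces the required decomposition $L=D+H$ with $D\colon{\rm Alg}\mathcal{N}\to\mathcal{B(X)}$ a derivation and $H\colon{\rm Alg}\mathcal{N}\to\mathbb{C}\i$ a linear map vanishing on all $(n-1)$-th commutators, together with the continuity transfer if $L$ is continuous. The converse direction (that every such $L=D+H$ is a Lie $n$-derivation) follows from Proposition \ref{xxsec2.5} combined with Lemma \ref{xxsec2.8}, exactly as noted in the sufficiency argument for the main theorem. There is no substantive obstacle here: the argument is purely a matter of unpacking the definitions in the $2$-dimensional setting and invoking Theorem \ref{xxsec3.3}, with the only small subtlety being the identification $\mathcal{X}_-=N$ and the resulting $1$-dimensionality of $\mathcal{X}^\bot_-$.
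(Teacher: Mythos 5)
Your proposal is correct and follows exactly the route the paper intends: the paper states Corollary \ref{xxsec3.5} as an immediate consequence of Theorem \ref{xxsec3.3}, and your verification that $\mathcal{X}_-=N$, $\dim\mathcal{X}^\bot_-=1$, and the hypotheses of Remark \ref{xxsec3.4} hold (via Lemma \ref{xxsec2.3} with the choice $N'=\mathcal{X}$) is precisely the unpacking the paper leaves to the reader. No gaps.
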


\subsection{The case of $\dim\Xb>1$}\label{sect.dim Xb>1}

\begin{lem}\label{xxsec3.6}
Let $\mathcal{X}$ be a Banach space with $\dim \mathcal{X}>2$, $\Xm\sneq \mathcal{X}$ and $\N$ be a non-trivial nest on $\mathcal{X}$. If a subalgebra $\A\seq\alg\N$ satisfies the conditions \emph{\ref{spade.annih neq 0}--\ref{spade.X(A)inters kerf}}, and $L\colon\A\to\mathcal{B(X)}$ is a Lie $n$-derivation, then there is a bilinear mapping $h\colon \mathcal{X}(\A)\times\Xb(\A)\to\mathbb{C}$ such that 
\[
(L(x\ot f)-h(x,f)\i)(\mathcal{X}(\A)\cap\ker f)\seq\mathbb{C}x, \ \ \forall x\in \mathcal{X}(\A), \ \ \forall f\in\Xb(\A),
\]
where
\[
h(x,f)=\left\{
\begin{array}{c l}
0&,\;x\in\ker f\\
\frac{1}{f(x)}f(L(x\ot f)x)&,\;\text{\emph{otherwise}}
\end{array}\right.
\]
In particular, for $x\in \mathcal{X}(\A)\cap\ker f$, we have $L(x\ot f)\ker f\seq\mathbb{C} x$. If $L$ is continuous, then $h$ is also continuous.
\end{lem}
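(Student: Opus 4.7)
The plan is to apply the Lie $n$-derivation identity to nested commutators built from $E := x\ot f$ and auxiliary rank-one operators $F := y\ot g \in \A$, the latter available by \ref{spade.all rank 1 op in A}. The key algebraic input is the rank-one multiplication rule $(x\ot f)(y\ot g) = f(y)(x\ot g)$, which shows that $[E,F] = 0$ exactly when $f(y) = g(x) = 0$; in that situation $p_n(E, F, \ldots, F) = 0$, and the Lie $n$-derivation rule collapses to
\[
p_n\bigl(L(E), F, \ldots, F\bigr) = -\sum_{k=2}^{n} p_n\bigl(E, F, \ldots, \mathop{L(F)}_{k\text{-th}}, \ldots, F\bigr).
\]

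For a fixed $y \in \mathcal{X}(\A) \cap \ker f$ linearly independent from $x$, I would pick $g \in \Xb(\A)$ with $g(x) = 0$; the hypothesis $\dim\Xb > 1$ together with \ref{spade.X(A)inters kerf} forces the annihilator of $x$ in $\Xb(\A)$ to be non-trivial. An induction on $n$ using $F^2 = g(y)F$ expresses $p_n(A, F, \ldots, F)$ for a generic $A$ as an explicit rank-at-most-two combination of $Ay \ot g$, $y \ot A^*g$ and $y \ot g$. Evaluating the displayed identity at a test vector $z \in \mathcal{X}(\A) \cap \ker f \cap \ker g$, which is non-zero by \ref{spade.X(A)inters kerf}, eliminates the $(y\ot g)$-contribution and, after tracking the rank-one structure of the right-hand side, shows that $L(E)y$ lies in $\mathbb{C} x + \mathbb{C} y$ modulo terms involving $L(F)$. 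Varying $g$ over $\{g \in \Xb(\A) : g(x) = 0\}$ and taking differences cancels the $L(F)$-contaminants and extracts a scalar $h(x, f)$ with $L(E)y - h(x, f)y \in \mathbb{C} x$.

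To identify $h(x, f)$, I would split into two cases: if $f(x) \neq 0$, specialising the argument to $y = x$ (after normalising $E$ to the idempotent $(1/f(x))E$) and pairing with $f$ recovers the stated formula $h(x, f) = f(L(E)x)/f(x)$; if $f(x) = 0$, then $E^2 = 0$ and Remark \ref{xxsec2.7} together force $h(x, f) = 0$, matching the case distinction in the definition of $h$. Bilinearity of $h$ follows from the uniqueness of the scalar in the containment $L(E)y - h(x,f)y \in \mathbb{C} x$ combined with the linearity of $L$ and of $(x, f) \mapsto x \ot f$; continuity, when $L$ is continuous, reduces to boundedness of the evaluations $T \mapsto f(Tx)$ and of $x \ot f \mapsto L(x \ot f)$. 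The ``in particular'' strengthening from $\mathcal{X}(\A) \cap \ker f$ to $\ker f$ when $x \in \ker f$ follows from the closedness of $\mathbb{C} x$ and a Hahn-Banach argument of the same flavour as Lemma \ref{xxsec2.1}. The principal obstacle is managing the $L(F)$-terms on the right of the displayed identity: since $F$ is itself a rank-one operator of the type under analysis, no single substitution eliminates it, and one must either vary $g$ over a sufficiently large family so that the $L(F)$-contributions cancel by linearity, or run a parallel identity with $E$ and $F$ interchanged and take a linear combination that balances the two unknowns against each other using $[E,F] = 0$.
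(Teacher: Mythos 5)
Your central identity degenerates and cannot produce the required containment for $n\geq 3$. You build the derivation rule around a commuting pair $E=x\ot f$, $F=y\ot g$ with $f(y)=g(x)=0$, so that $p_n(E,F,\dots,F)=0$. But then $p_k(E,F,\dots,F)=0$ for every $k\geq 2$, so every summand with $L(F)$ in position $k\geq 3$ vanishes identically, and your displayed identity collapses to $p_{n-1}(S,F,\dots,F)=0$ with $S=[L(E),F]+[E,L(F)]$. Worse, every $g\in\Xb(\A)$ annihilates $\Xm$, and the test vectors $y\in \mathcal{X}(\A)\cap\ker f$ you must handle include (in the intended application, where $\mathcal{X}(\A)=\mathcal{X}$) all of $\Xm\neq\{0\}$; for such $y$ your $F$ is square-zero, $g(y)=0$. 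Then $[S,F]=Sy\ot g-y\ot S^*g$, $[[S,F],F]=-2g(Sy)\,y\ot g$, and $[[[S,F],F],F]=0$. Hence for $n\geq 5$ the collapsed identity reads $0=0$; for $n=4$ it gives only $g(Sy)=0$; and for $n=3$ it gives $Sy\in\mathbb{C}y$, which, since $Fy=0$, is the statement $f(L(F)y)x-g(L(E)y)y\in\mathbb{C}y$ --- information about $L(F)$, none about $L(E)y$. Only for $n=2$ does the crucial term $L(E)y\ot g$ survive. Secondary problems: the nonzero $g\in\Xb(\A)$ with $g(x)=0$ is justified by ``$\dim\Xb>1$'', which is not a hypothesis of this lemma; the case $f(x)\neq 0$ is handled by ``specialising to $y=x$'', which contradicts your standing requirement $f(y)=0$ and makes $E$ and $F$ proportional, killing all brackets; and the ``in particular'' extension from $\mathcal{X}(\A)\cap\ker f$ to $\ker f$ cannot be obtained by a Hahn--Banach/density argument, since $\mathcal{X}(\A)\cap\ker f$ need not be dense in $\ker f$.

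The paper's proof goes the opposite way: it chooses the auxiliary operator so that the iterated commutator is a \emph{nonzero} rank-one operator rather than zero. By \ref{spade.y ot f in A} there is $y$ with $f(y)=1$ and $y\ot f\in\A$; for $x\in\ker f$ one has $[x\ot f,y\ot f]=x\ot f$, so $x\ot f=p_n(x\ot f,y\ot f,\dots,y\ot f)$ and $L(x\ot f)$ appears intact on the left of the derivation identity. Evaluating at $z\in\ker f$ gives $L(x\ot f)z\in\mathbb{C}y+\mathbb{C}x$, and replacing $y$ by a second element of $\I(f)$ eliminates the $y$-component, yielding $L(x\ot f)z\in\mathbb{C}x$ for \emph{every} $z\in\ker f$ (which is why the ``in particular'' clause comes for free). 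For $f(x)\neq 0$ it uses $p_n(z\ot f,x\ot f,\dots,x\ot f)=f(x)^{n-1}\,z\ot f$ instead, acting on $x$ to extract $h(x,f)=f(L(x\ot f)x)/f(x)$. Both substitutions stay inside the single functional $f$, so no second functional $g$ and no assumption on $\dim\Xb(\A)$ are needed. If you want to rescue your scheme, you must abandon the commuting choice of $F$ and arrange the bracket $[E,F]$ to reproduce $E$ (or a known multiple of a rank-one operator), exactly as the paper does.
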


\begin{proof}

The case of $f=0$ is trivial. We next assume that $f\in\Xb(\A)\sm\{0\}$. If $x\in\ker f$, by \ref{spade.y ot f in A}, there exists $y\in \mathcal{X}\sm\Xm$ such that $f(y)=1$ and $y\ot f\in\A$. Let us put $y'=y+x$. Then $y'\in \mathcal{X}(\A)$ and $y, y', x$ are linearly independent. For any $z\in\ker f$, a direct computation shows that
\begin{align*}
L(x&\,\ot\, f)z=L(p_n(x\ot f,y\ot f,\cdots,y\ot f))z\\
=&p_n(L(x\ot f),y\ot f,\cdots,y\ot f)z+p_n(x\ot f,y\ot f,\cdots,y\ot f,L(y\ot f))z\\
&+\sum_{i=2}^{n-1}p_n(x\ot f,y\ot f,\cdots,y\ot f,\mathop{L(y\ot f)}_{\substack{\uparrow\\ i\text{-th}}},y\ot f,\cdots,y\ot f)z\\
=&(-1)^{n-1}(y\ot f)L(x\ot f)z+p_2(x\ot f,L(y\ot f))z+0\\
=&(-1)^{n-1}f(L(x\ot f)z)y+f(L(y\ot f)z)x.
\end{align*}
By an analogous manner, we have
\[
L(x\ot f)z=(-1)^{n-1}f(L(x\ot f)z)y'+f(L(y'\ot f)z)x.
\]
The linear independence of $y,y',x$ implies that $f(L(x\ot f)z)=0$. We therefore have $L(x\ot f)z\in\mathbb{C} x$.

If $x\notin\ker f$, for an arbitrary $z\in \mathcal{X}(\A)\cap\ker f$, it follows that
$$
\begin{aligned}
\notag f(x)^{n-1}L(z\ot f)=&L(p_n(z\ot f,x\ot f,\cdots,x\ot f))\\
=&\sum_{i=1}^{n}p_n(z\ot f,x\ot f,\cdots,\mathop{L(x\ot f)}_{\substack{\uparrow\\ i\text{-th}}},\cdots,x\ot f).
\end{aligned}\eqno{(3.11)}
$$
By a direct verification, we get
\begin{align*}
p_n(z\ot f,x\ot f,\cdots,&\mathop{L(x\ot f)}_{\substack{\uparrow\\ i\text{-th}}},\cdots,x\ot f)\\
=&f(x)^{i-2}p_{n-(i-2)}(z\ot f,L(x\ot f),x\ot f,\cdots,x\ot f)\;(i>1)
\end{align*}
and
$$
p_n(z\ot f,x\ot f,\cdots,x\ot f,L(x\ot f))=f(x)^{n-2}p_2(z\ot f,L(x\ot f)).
$$
For an arbitrary $A_1\in\mathcal{B(X)}$, one can define $A_i:=p_i(A_1,x\ot f,\cdots,x\ot f)\;(i\in\Nn)$. By an induction on $i$, it follows that
$$
p_2(A_{k},x\ot f)x=f(x)^{k}A_1x-\sum_{i=2}^{k}f(x)^{k-i}f(A_ix)x,\ \  \forall k\in\Nn.\eqno{(3.12)}
$$
Considering the action of (3.11) on $x$ and using (3.12) we obtain
\begin{align*}
f(x)^{n-1}L(z\ot f)x=&f(x)^{n-1}L(z\ot f)x-\l_1x\\
&+(n-2)f(x)^{n-2}p_2(z\ot f,L(x\ot f))x-\l_2x\\
&+f(x)^{n-2}p_2(z\ot f,L(x\ot f))x,
\end{align*}
where $\l_1,\l_2\in\mathbb{C}$. Simplifying the above equality, we get
\[
L(x\ot f)z=\frac{1}{f(x)}f(L(x\ot f)x)z-\frac{\l_1+\l_2}{(n-1)f(x)^{n-1}}x.
\]
This implies that $(L(x\ot f)-h(x,f)\i)z\in\mathbb{C} x$.

We next show that $h(x,f)$ is bilinear. Fixing $f\in\Xb(\A)$, for any $x,y\in \mathcal{X}(\A)$ and $a,b\in\mathbb{C}$, if $x,y\in\ker f$, then this case is trivial. If $x\in\ker f$ while $y\notin\ker f$, by \ref{spade.X(A)inters kerf} we can choose $z\in \mathcal{X}(\A)\cap\ker f$ such that $x,y,z$ are linearly independent. Considering the actions of $L(x\ot f)$, $L(y\ot f)$, $L((ax+by)\ot f)$ on $z$, respectively, the linearity of $L$ entails
$$
(h(ax+by,f)-bh(y,f))z\in\mathbb{C} x+\mathbb{C} y.
$$
Thus $h(ax+by,f)=bh(y,f)$. If $x,y\notin\ker f$, and $x,y$ are linearly dependent, then we take $z\in \mathcal{X}(\A)\cap\ker f$ such that $z,x$ are linearly independent. Taking into account the actions of $L(x\ot f)$, $L(y\ot f)$, $L((ax+by)\ot f)$ on $z$, respectively, the linearity of $L$ implies 
$$
(h(ax+by,f)-ah(x,f)-bh(y,f))z\in\mathbb{C} x.
$$
And hence $h(ax+by,f)=ah(x,f)+bh(y,f)$. If $x,y\notin\ker f$ are linearly independent, by \ref{spade.X(A)inters kerf} we can pick $z^\prime \in \mathcal{X}(\A)\cap\ker f$ such that $z^\prime, z:=y-\frac{f(y)}{f(x)}x\in \mathcal{X}(\A)\cap\ker f$ are linearly independent. It is not difficult to see the linear independence of $x,y,z'$ from the linear independence of $x,z,z'$. Using $L(x\ot f)$, $L(y\ot f)$ and $L((ax+by)\ot f)$ to act on $z'$, respectively, the linearity of $L$ implies 
\[
(h(ax+by,f)-ah(x,f)-bh(y,f))z'\in\mathbb{C} x+\mathbb{C} y.
\]
Therefore $h(ax+by,f)=ah(x,f)+bh(y,f)$. In any case, $h(x,f)$ is linear with respect to $x$. Fixing $x\in \mathcal{X}(\A)$, for arbitrary elements $f_1,f_2\in\Xb(\A)$ and $a,b\in\mathbb{C}$, if $x\in\ker f_1\cap\ker f_2$, then the linearity for $f_1,f_2$ is clear; if $x\notin\ker f_1\cap\ker f_2$, by \ref{spade.X(A)inters kerf} there exists $z\in \mathcal{X}(\A)\cap\ker f_1\cap\ker f_2$ such that $z, x$ are linearly independent. Considering the actions of $L(x\ot f_1)$, $L(x\ot f_2)$ and $L(x\ot(af_1+bf_2))$ on $z$,  respectively, the linearity of $L$ yields
\[
(h(x,af_1+bf_2)-ah(x,f_1)-bh(x,f_2))z\in\mathbb{C} x.
\]
And hence $h(x,af_1+bf_2)=ah(x,f_1)+bh(x,f_2)$. In any case, $h(x,f)$ is linear with respect to $f$. 

At last, we prove that if $\mathcal{X}=\mathcal{X}(\A)$ and $L$ is continuous, then $h(x,f)$ is continuous for $x$. Fixing $f\in\Xb(\A)$ and $x\in \mathcal{X}(\A)\sm\ker f$, for an arbitrary sequence $\{x_m\}\seq X$ satisfying $x_m\to x$, there exists $M\in\Nn$ such that for any $m\geqslant M$, $x_m\notin\ker f$ (Otherwise, it will contradict with the continuity of $f$). Then for the sequence $\{x_m\}_{m=M}^\infty$, we have 
\[
h(x_m,f)=\frac{1}{f(x_m)}f(L(x_m\ot f)x_m)\to\frac{1}{f(x)}f(L(x\ot f)x)=h(x,f).
\]
Thus $h(x,f)$ is continuous at $x$ for a fixed $f\in\Xb(\A)$. Since $\mathcal{X}(\A)=\mathcal{X}$ is a linear space, the linear functional $h_f(x):=h(x,f)$ is continuous. Namely, $h(x,f)$ is continuous for $x$.
\end{proof}

Suppose that $\A$ is a subalgebra of ${\rm Alg}\mathcal{N}$ satisfying the conditions \ref{spade.annih neq 0}--\ref{spade.X(A)=X} and that $L\colon\A\to \mathcal{B(X)}$ is a Lie $n$-derivation. For an arbitrary $f\in\Xb(\A)\sm\{0\}$, denote $\I(f)=\{y\in \mathcal{X}(\A)\mid f(y)=1\}\neq\varnothing$. For any $y\in\I(f)$, one can define a mapping $\F_{f,y}\colon \mathcal{X}(\A)\to \mathcal{X}$ as follows
$$
\F_{f,y}x:=(L(x\ot f)-h(x,f)\i)y, \ \ \forall x\in \mathcal{X}(\A).\eqno{(3.13)}
$$
By linearity of $L$ and bilinearity of $h$, we can see that $\F_{af,y}=\F_{f,ay}\;(ay\in\I(f),\,a\in\mathbb{C}\sm\{0\})$. By Lemma \ref{xxsec3.6}, the following relation 
\[
(L(x\ot f)-h(x,f)\i)z=\f_{x,f}(z)x,  \ \ \forall x\in \mathcal{X}(\A)
\]
can establish a mapping $\f_{x,f}\colon \mathcal{X}(\A)\cap\ker f\to\mathbb{C}$. Obviously, $\F_{f,y}$ and $\f_{x,f}$ are both linear, and $\f_{x,f}$ is continuous. By Lemma \ref{xxsec3.6} we know that if $L$ is continuous, then $\F_{f,y}$ is also continuous. For a given $y\in\I(f)$, it is easy to see that there exists a continuous linear extension $\f_{x,f,y}\colon \mathcal{X}(\A)\to\mathbb{C}$ of $\f_{x,f}$ satisfying $\f_{x,f,y}(y)=0$. Indeed, by Hahn-Banach theorem we can construct a continuous linear extension $\f\colon \mathcal{X}(\A)\to\mathbb{C}$ of $\f_{x,f}$, and it suffices to take $\f_{x,f,y}=\f-\f(y)f$.

\begin{lem}\label{xxsec3.7}
For any $x\in X(\A)$, $f\in\Xb(\A)$ and $y\in\I(f)$, we have
\[
L(x\ot f)=\F_{f,y}x\ot f+x\ot\f_{x,f,y}+h(x,f)\i.
\]
\end{lem}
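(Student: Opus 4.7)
The plan is to verify the claimed operator identity by evaluating both sides on an arbitrary vector $z\in\mathcal{X}(\mathcal{A})$. Since $y\in\mathcal{I}(f)$ satisfies $f(y)=1$, any such $z$ admits the canonical decomposition
\[
z=f(z)\,y+\bigl(z-f(z)\,y\bigr),
\]
where $z-f(z)y\in\mathcal{X}(\mathcal{A})\cap\ker f$ (linearity of $\mathcal{X}(\mathcal{A})$ is guaranteed by condition~\ref{spade.all rank 1 op in A}). Because every term in the asserted identity is $\mathbb{C}$-linear in its argument, it suffices to check agreement separately at $z=y$ and at arbitrary $z\in\mathcal{X}(\mathcal{A})\cap\ker f$.

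First I would evaluate at $y$. The defining relation (3.13) for $\Phi_{f,y}$ gives $L(x\otimes f)y=\Phi_{f,y}x+h(x,f)y$, while the right-hand side produces $f(y)\Phi_{f,y}x+\varphi_{x,f,y}(y)x+h(x,f)y$; by $f(y)=1$ and the Hahn--Banach normalization $\varphi_{x,f,y}(y)=0$ enforced when extending $\varphi_{x,f}$, the two expressions coincide.

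Next I would evaluate at an arbitrary $z\in\mathcal{X}(\mathcal{A})\cap\ker f$. By Lemma~\ref{xxsec3.6}, $(L(x\otimes f)-h(x,f)I)z\in\mathbb{C}x$, and the scalar is precisely $\varphi_{x,f}(z)$ by definition, so
\[
L(x\otimes f)z=\varphi_{x,f}(z)\,x+h(x,f)\,z.
\]
The right-hand side reduces to $0\cdot\Phi_{f,y}x+\varphi_{x,f,y}(z)x+h(x,f)z$ because $f(z)=0$, and $\varphi_{x,f,y}(z)=\varphi_{x,f}(z)$ since $\varphi_{x,f,y}$ extends $\varphi_{x,f}$. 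The two sides agree, and combining both cases via linearity yields the required operator equality on $\mathcal{X}(\mathcal{A})$.

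The main obstacle is essentially bookkeeping: one must confirm that the decomposition lands in the correct summands and that the normalization $\varphi_{x,f,y}(y)=0$ was indeed enforced in the Hahn--Banach step (both have already been arranged in the preceding discussion). No further analytic input is needed; the identity is a tautological consequence of how $\Phi_{f,y}$, $\varphi_{x,f}$, $\varphi_{x,f,y}$ and $h$ were constructed, since all of the genuine work for this block of lemmas has already been invested in Lemma~\ref{xxsec3.6}.
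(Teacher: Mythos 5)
Your proof is correct and follows essentially the same route as the paper: decompose $z=f(z)y+(z-f(z)y)$, use the defining relation (3.13) for $\F_{f,y}$ on the $y$-component, the defining relation for $\f_{x,f}$ on the $\ker f$-component, and the normalization $\f_{x,f,y}(y)=0$ to glue the two pieces together. The paper merely writes this as a single chain of equalities rather than two separate cases, so there is no substantive difference.
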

\begin{proof}
For any $z\in \mathcal{X}(\A)=\mathcal{X}$, we have $z'=z-f(z)y\in \mathcal{X}(\A)\cap\ker f$. Thus we get
\begin{align*}
(L(x\ot f)-h(x,f)\i)z=&(L(x\ot f)-h(x,f)\i)(f(z)y+z')\\
=&f(z)\F_{f,y}x+\f_{x,f,y}(z')x\\
=&f(z)\F_{f,y}x+\f_{x,f,y}(z)x\\
=&(\F_{f,y}x\ot f+x\ot \f_{x,f,y})z.\qedhere
\end{align*}
\end{proof}

\begin{lem}\label{xxsec3.8}
$\f_{x,f}$ is not related with the choice of $x$. In this case, we write $\f_f$ for $\f_{x,f}$, and write $\f_{f,y}$ for $\f_{x,f,y}$.
\end{lem}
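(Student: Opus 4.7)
The plan is to fix $f \in \Xb(\A) \setminus \{0\}$ and show that, as a linear functional on $\mathcal{X}(\A) \cap \ker f$, the mapping $\f_{x,f}$ does not depend on the nonzero $x \in \mathcal{X}(\A)$ used to define it. Once this is established, the common value can be denoted $\f_f$, and the companion extension $\f_{f,y} := \f_{x,f,y}$ inherits the same independence, since it is obtained from $\f_{x,f}$ via a Hahn-Banach extension followed by the normalization $\f \mapsto \f - \f(y) f$. The verification splits into two cases according to whether two candidate vectors $x, x' \in \mathcal{X}(\A) \setminus \{0\}$ are linearly dependent.

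In the linearly dependent case $x' = \l x$ with $\l \in \mathbb{C} \setminus \{0\}$, I would invoke the linearity of $x \mapsto L(x \ot f)$ and the bilinearity of $h$ established in Lemma~\ref{xxsec3.6}. For any $z \in \mathcal{X}(\A) \cap \ker f$ this yields
\[
\f_{x',f}(z)(\l x) = \bigl(L(\l x \ot f) - h(\l x, f)\i\bigr) z = \l\bigl(L(x \ot f) - h(x,f)\i\bigr) z = \l \f_{x,f}(z) x,
\]
and since $x \neq 0$ this forces $\f_{x',f}(z) = \f_{x,f}(z)$.

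In the linearly independent case, I would apply the defining relation to the three operators $x \ot f$, $x' \ot f$ and $(x + x') \ot f$ simultaneously. Linearity of $L$ and bilinearity of $h$ give
\[
\f_{x+x',f}(z)(x + x') = \f_{x,f}(z) x + \f_{x',f}(z) x'
\]
for every $z \in \mathcal{X}(\A) \cap \ker f$; reading off coordinates in $\lspan\{x, x'\}$ yields $\f_{x,f}(z) = \f_{x+x',f}(z) = \f_{x',f}(z)$. The situation $x + x' = 0$ falls into the first case, so the two cases cover all pairs of nonzero candidates. I do not anticipate a genuine obstacle here: the conclusion is purely an algebraic consequence of the linearity of $L$, the bilinearity of $h$, and the uniqueness of the coefficient in $\mathbb{C} x$ built into $(L(x \ot f) - h(x,f)\i)z = \f_{x,f}(z) x$; condition~\ref{spade.X(A)inters kerf} simply ensures that $\mathcal{X}(\A) \cap \ker f$ is nontrivial so that the identification is nonvacuous.
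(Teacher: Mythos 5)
Your proposal is correct and follows essentially the same route as the paper: compare $\f_{x,f}$, $\f_{x',f}$ and $\f_{x+x',f}$ via linearity of $L$ and bilinearity of $h$, then read off coefficients along linearly independent vectors. The only cosmetic difference is that you settle the linearly dependent case directly by the scaling identity $\f_{\l x,f}=\f_{x,f}$, whereas the paper chains through a third vector $x_3$ linearly independent of $x_1$; both are valid, and your treatment also bypasses Lemma~\ref{xxsec3.7} by working straight from the defining relation, which is harmless since $f(z)=0$ kills the $\F_{f,y}$ terms in any case.
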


\begin{proof}
Let us take $f\in\Xb(\A)\sm\{0\}$ and $y\in\I(f)$. For linearly independent $x_1,x_2\in \mathcal{X}\sm\{0\}$, we arbitrarily pick $z\in\ker f$. By Lemma~\ref{xxsec3.7} it follows that
\begin{align*}
f(z)\F_{f,y}(x_1+x_2)+&\f_{x_1+x_2,f,y}(z)(x_1+x_2)\\
=&((L(x_1+x_2)\ot f)-h(x_1+x_2,f)\i)z\\
=&(L(x_1\ot f)-g(x_1,f)\i+L(x_2\ot f)-h(x_2,f)\i)z\\
=&f(z)\F_{f,y}(x_1)+\f_{x_1,f,y}(z)x_1+f(z)\F_{f,y}(x_2)+\f_{x_2,f,y}(z)x_2,
\end{align*}
from which we get (note that $\f_{x,f,y}$ is an extension of $\f_{x,f}$)
\[
\f_{x_1+x_2,f}(z)-\f_{x_1,f(z)}x_1=(\f_{x_2,f}(z)-\f_{x_1+x_2,f}(z))x_2.
\]
We therefore have $\f_{x_1,f}=\f_{x_1+x_2,f}=\f_{x_2,f}$. Since $\dim\Xb>1$, we see that $\dim \mathcal{X}>1$. For linearly dependent $x_1,x_2\in \mathcal{X}$, we can choose $x_3$ such that $x_3, x_1$ are linearly independent. In view of the previous proof, we conclude that $\f_{x_1,f}=\f_{x_3,f}=\f_{x_2,f}$.
\end{proof}

\begin{lem}\label{xxsec3.9}
For any $f_1,\,f_2\in\Xb(\A)\sm\{0\}$ and $y_j\in\I(f_j)\;(j=1,2)$, we have
\[
\F_{f_1,y_1}-\F_{f_2,y_2}\in\mathbb{C}\i.
\]
\end{lem}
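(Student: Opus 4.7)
I would reduce to the case $f_1, f_2$ linearly independent and then identify the scalar $\l\in\mathbb{C}$ for which $\F_{f_1,y_1} - \F_{f_2,y_2} = \l\i$ by applying the Lie $n$-derivation rule to two carefully chosen $n$-commutators of rank-one operators and comparing with Lemma~\ref{xxsec3.7}.

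If $f_2 = a f_1$ with $a\ne 0$, the scaling identity $\F_{af,y} = \F_{f,ay}$ noted after~(3.13) reduces the claim to $\F_{f_1,y_1} - \F_{f_1,ay_2}\in\mathbb{C}\i$; here $(\F_{f_1,y_1} - \F_{f_1,ay_2})x = (L(x\ot f_1) - h(x,f_1)\i)(y_1 - ay_2)$ with $y_1 - ay_2\in\ker f_1$, which lies in $\mathbb{C}x$ for every $x\in\mathcal{X}(\A)=\mathcal{X}$ by Lemma~\ref{xxsec3.6}, and any linear operator with $Tx\in\mathbb{C}x$ for all $x$ is automatically a scalar multiple of $\i$. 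This same argument shows that for fixed $f_j$, changing $y_j\in\I(f_j)$ alters $\F_{f_j,y_j}$ only by a scalar, so in the linearly independent case I may choose $y_1, y_2$ conveniently.

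For $f_1, f_2$ linearly independent I take $y_1 \in \I(f_1)\cap\ker f_2$ and $y_2 \in \I(f_2)\cap\ker f_1$; such $y_j\in\mathcal{X}$ exist because linear independence forces $f_1|_{\ker f_2}\ne 0$ and $f_2|_{\ker f_1}\ne 0$. For $x\in\ker f_2$, a direct bracket calculation using these kernel conditions yields
\[
p_n(x\ot f_1,\,y_1\ot f_2,\,y_2\ot f_2,\dots,y_2\ot f_2) \;=\; x \ot f_2.
\]
Applying $L$, expanding each $L(A_k)$ on the right via Lemma~\ref{xxsec3.7}, and comparing with $L(x\ot f_2) = \F_{f_2,y_2}x\ot f_2 + x\ot\f_{f_2,y_2}$ (using $h(x,f_2)=0$ on $\ker f_2$), cancellation of the $x\ot\f_{f_2,y_2}$ terms would leave the relation
\[
\F_{f_1,y_1}x - \F_{f_2,y_2}x \;=\; -f_1(\F_{f_2,y_2}y_1)\,x + \bigl(f_2(\F_{f_1,y_1}x) + \f_{f_2,y_2}(x)\bigr)\,y_2,\qquad x\in\ker f_2.
\]

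The main obstacle is eliminating the $y_2$-coefficient in this display: the identity alone is consistent with it being nonzero, and applying $f_2$ to both sides is tautological. I would kill it with a second $n$-commutator identity: since $[x\ot f_2, y_1\ot f_2] = 0$ for $x\in\ker f_2$, $p_n(x\ot f_1,\,y_1\ot f_2,\,y_1\ot f_2,\,y_2\ot f_2,\dots,y_2\ot f_2) = 0$ for $n\geq 3$ (the case $n=2$ is handled directly from the $y_1\ot f_1$-component of the preceding expansion). Expanding $L(0)=0$ by the Lie $n$-derivation rule on this vanishing $p_n$ and matching the coefficients of $x\ot f_2$ and $y_1\ot f_2$ yields $\f_{f_2,y_2}(x) = -f_2(\F_{f_1,y_1}x)$ on $\ker f_2$, whereupon the displayed relation collapses to $(\F_{f_1,y_1} - \F_{f_2,y_2})x = \l x$ on $\ker f_2$ with $\l := -f_1(\F_{f_2,y_2}y_1)$. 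The symmetric computation (swapping $f_1,f_2$ and $y_1,y_2$) gives $(\F_{f_1,y_1} - \F_{f_2,y_2})x = f_2(\F_{f_1,y_1}y_2)\,x$ on $\ker f_1$; agreement of the two scalars on the overlap follows by evaluating on a nonzero $x\in\mathcal{X}(\A)\cap\ker f_1\cap\ker f_2$ (nonempty by~\ref{spade.X(A)inters kerf}). Since $\ker f_1+\ker f_2=\mathcal{X}$ by linear independence of $f_1, f_2$, linearity extends $(\F_{f_1,y_1} - \F_{f_2,y_2}) = \l\i$ to all of $\mathcal{X}$, completing the proof.
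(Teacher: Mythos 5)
Your argument is correct, but in the linearly independent case it takes a genuinely different and much heavier route than the paper. The paper never forms an $n$-commutator at this point: it simply uses linearity of $L$ in the functional variable, writing $L(x\ot(f_1+f_2))=L(x\ot f_1)+L(x\ot f_2)$, expands all three rank-one images via Lemma~\ref{xxsec3.7}, and then evaluates the resulting operator identity at a single vector $x_1-x_2$ chosen (via $f_i(x_j)=\delta_{ij}$, which exists by linear independence) so that $f_1(x_1-x_2)=1$, $f_2(x_1-x_2)=-1$ and $(f_1+f_2)(x_1-x_2)=0$; the $\F_{f_1+f_2,y'}$ term is annihilated and what survives is directly $(\F_{f_1,y_1}-\F_{f_2,y_2})x=c\,x$ with $c$ a fixed scalar built from the functionals $\f_{\cdot,\cdot}$, independent of $x$ by Lemma~\ref{xxsec3.8}. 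This is essentially a two-line computation, independent of $n$ and of any parity considerations. Your route instead expands $L$ on two $n$-fold commutators of rank-one operators, which forces you back through the kind of case-by-case bookkeeping (in $n$ and in the position of $L$) that Lemma~\ref{xxsec3.7} was designed to avoid, and your displayed relations are only asserted "would leave/would yield" rather than derived; I checked your second identity for $n=3$ and it does produce $\f_{f_2,y_2}(x)=-f_2(\F_{f_1,y_1}x)$ as claimed, and the final patching of the two scalars across $\ker f_1+\ker f_2=\mathcal{X}$ via a common point of $\ker f_1\cap\ker f_2$ is sound, so the proof goes through --- but you pay with substantially more computation for the same conclusion. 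Your treatment of the linearly dependent case (scaling identity plus Lemma~\ref{xxsec3.6} applied to $y_1-ay_2\in\ker f_1$) coincides with the paper's.
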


\begin{proof}
If $f_1$ and $f_2$ are linearly independent, then by \cite[Proposition 1.1.1]{KadisonRingrose1}, $\ker f_1$ and $\ker f_2$ do not contain mutually. Thus there are $x_1,\,x_2\in \mathcal{X}=\mathcal{X}(\A)$ such that $f_i(x_j)=\delta_{ij}$, where $\delta_{ij}$ is the Kronecker sign. For any $x\in \mathcal{X}$, it follows that
$$
\begin{aligned}
&\F_{f_1,y_1}x\ot f_1+x\ot \f_{f_1,y_1}+\F_{f_2,y_2}x\ot f_2+x\ot\f_{f_2,y_2}\\
=&L(x\ot (f_1+f_2))-h(x,f_1+f_2)\i\\
=&\F_{f_1+f_2,y'}x\ot(f_1+f_2)+x\ot\f_{f_1+f_2,y'}, 
\end{aligned}\eqno{(3.14)}
$$
where $y'\in\I(f_1+f_2)$. Applying (3.14) to $x_1-x_2$ yields 
\[
(\F_{f_1,y_1}-\F_{f_2,y_2})x=(\f_{f_1+f_2,y'}(x_1-x_2)-\f_{f_1,y_1}(x_1-x_2)-\f_{f_2,y_2}(x_1-x_2))x.
\]
If $f_1$ and $f_2$ are linearly dependent, then we can assume $f_1=af_2$ and $a\neq 0$. It is easy to see that $ay_1\in\I(f_2)$. We now arrive at 
\[
\F_{f_1,y_1}=\F_{af_2,y_1}=\F_{f_2,ay_1}.
\]
For any $y, y'\in\I(f_2)$, then $y-y'\in \mathcal{X}(\A)\cap\ker f_2$. For any $x\in \mathcal{X}(\A)$, we by Lemma \ref{xxsec3.8} get
\[
(\F_{f_2,y}-\F_{f_2,y'})x=(L(x\ot f_2)-h(x,f_2)\i)(y-y')=\f_{f_2}(y-y') x.
\]
Let us set $y=ay_1$, $y'=y_2$. Then we obtain  $\F_{f_1,y_1}-\F_{f_2,y_2}\in\mathbb{C}\i$.
\end{proof}

\begin{lem}\label{xxsec3.10}
For any $f\in\Xb(\A)$, $y\in\I(f)$, we have $f\circ\F_{f,y}+\f_{f,y}=0$.
\end{lem}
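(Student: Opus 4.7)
The plan is to compare two expressions for $L(x\ot f)$ when $x\in \mathcal{X}(\A)\cap\ker f$: one given directly by Lemma~\ref{xxsec3.7}, and another obtained by applying the Lie $n$-derivation rule to a specifically chosen identity.

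First I would record the combinatorial fact that, for $x\in\ker f$ and $f(y)=1$, the iterated commutator $p_k(x\ot f, y\ot f, \ldots, y\ot f)$ (with $k-1$ copies of $y\ot f$) equals $x\ot f$ for every $k\geq 2$, since $y\ot f$ is an idempotent and $x\ot f$ lies in its ``off-diagonal'' part. This lets me write $L(x\ot f)=L(p_n(x\ot f, y\ot f, \ldots, y\ot f))$ and expand the right-hand side by the Lie $n$-derivation rule into $n$ terms. A preliminary observation I would establish is that $\f_{f,y}(y)=0$ (by construction of the extension) and $f(\F_{f,y}y)=0$ (which follows immediately from $\F_{f,y}y=L(y\ot f)y-h(y,f)y$ together with $h(y,f)=f(L(y\ot f)y)$, both coming from the definitions in the paragraph after Lemma~\ref{xxsec3.6}).

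The heart of the argument is to compute each of the $n$ terms explicitly, substituting the formulas from Lemma~\ref{xxsec3.7} for $L(x\ot f)$ and $L(y\ot f)$ (noting $h(x,f)=0$ since $x\in\ker f$) and using the elementary rank-one identities $(a\ot g)(b\ot h)=g(b)\,a\ot h$ and $(b\ot h)^*g=g(b)\,h$. The middle terms all vanish: for each $2\le k<n$, the inner bracket $[x\ot f, L(y\ot f)]$ evaluates to $x\ot\f_{f,y}-\f_{f,y}(x)\,y\ot f$, and one further bracket with $y\ot f$ annihilates this rank-two operator because $\f_{f,y}(y)=0$ and $f(x)=0$. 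Only the $k=1$ and $k=n$ terms survive, producing respectively $\F_{f,y}x\ot f-f(\F_{f,y}x)\,y\ot f$ and $x\ot\f_{f,y}-\f_{f,y}(x)\,y\ot f$. Comparing the sum with the direct expression $L(x\ot f)=\F_{f,y}x\ot f+x\ot\f_{f,y}$ from Lemma~\ref{xxsec3.7} yields
\[
(f(\F_{f,y}x)+\f_{f,y}(x))\,y\ot f=0,
\]
and since $y\ot f\neq 0$, we conclude $f(\F_{f,y}x)+\f_{f,y}(x)=0$ for every $x\in\mathcal{X}(\A)\cap\ker f$.

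To finish, I would extend the identity to an arbitrary $x\in\mathcal{X}(\A)$ by decomposing $x=f(x)y+(x-f(x)y)$ with $x-f(x)y\in\ker f$; linearity of both $f\circ\F_{f,y}$ and $\f_{f,y}$, combined with the kernel case and the two preliminary vanishings $\f_{f,y}(y)=0$ and $f(\F_{f,y}y)=0$, gives the conclusion in full generality. The main obstacle is the combinatorial verification that the intermediate positions $2\le k<n$ all contribute zero; once the key single-step annihilation $[x\ot\f_{f,y}-\f_{f,y}(x)\,y\ot f,\,y\ot f]=0$ is pinned down, the remaining work reduces to routine bookkeeping with brackets against the idempotent $y\ot f$.
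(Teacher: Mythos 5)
Your argument is correct, but it takes a genuinely different route from the paper's. You re-apply the Lie $n$-derivation rule to the identity $p_n(x\ot f,y\ot f,\dots,y\ot f)=x\ot f$ for $x\in\ker f$, substitute the decompositions of $L(x\ot f)$ and $L(y\ot f)$ from Lemma~\ref{xxsec3.7}, verify that every intermediate term dies after one further bracket with the idempotent $y\ot f$ (using $\f_{f,y}(y)=0$, $f(x)=0$ and your preliminary observation $f(\F_{f,y}y)=0$), and then read off the coefficient of $y\ot f$. The paper never returns to the derivation identity at this stage: it evaluates $h(y+z,f)=f\big(L((y+z)\ot f)(y+z)\big)$ for $z\in\ker f$, expands the cross terms via $f(\F_{f,y}z)=f(L(z\ot f)y)$, $\f_{f,y}(z)=f(L(y\ot f)z)$ and $f(L(z\ot f)z)=0$, and cancels against $h(y+z,f)=h(y,f)$ using additivity of $h$ in its first variable --- in effect a polarization of the bilinear form $h$ established in Lemma~\ref{xxsec3.6}. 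Both proofs conclude identically, passing from $\ker f$ to all of $\mathcal{X}$ via $f(\F_{f,y}y)=\f_{f,y}(y)=0$ and $\ker f+\I(f)=\mathcal{X}$. The paper's route is shorter and reuses only what is already packaged into $h$; yours costs more bookkeeping (the case analysis over the positions $2\le k<n$) but needs no appeal to the bilinearity of $h$ and doubles as a direct consistency check of Lemma~\ref{xxsec3.7} against the defining identity of a Lie $n$-derivation.
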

\begin{proof}
For any $z\in\ker f$, it follows from definition (3.13) that
$$
f(\F_{f,y}z)=f(L(z\ot f)y),\;\f_{f,y}(z)=\f_{y,f,y}(z)=f(L(y\ot f)z).
$$
By invoking Lemma \ref{xxsec3.6}, we have
\begin{align*}
h(y+z,f)=&f(L(y+z)\ot f)(y+z))\\
=&f(L(y\ot f)y)+f(\F_{f,y}z)+\f_{f,y}(z)+0\\
=&h(y,f)+f(\F_{f,y}z)+\f_{f,y}(z)+0.
\end{align*}
Thus $f(\F_{f,y}z)+\f_{f,y}(z)=h(z,f)=0$. Then by $f(\F_{f,y}y)=f(L(y\ot f)y)-h(y,f)=0$, $\f_{f,y}(y)=0$ and $\ker f+\I(f)= X$, we finish the proof.
\end{proof}

We are in a position to give our main result of this subsection.

\begin{theorem}\label{xxsec3.11}
Let $\mathcal{X}$ be a Banach space with $\dim \mathcal{X}>2$ and $\N$ be a non-trivial nest on $\mathcal{X}$. Suppose that $\A$ is a subalgebra of $\alg\N$ satisfying \emph{\ref{spade.annih neq 0}--\ref{spade.Ax and A^*f}} and $\dim\Xb(\A)>1$. Then each Lie-type derivation is of standard form.  More precisely, for any Lie $n$-derivation $L\colon\A\to\mathcal{B(X)}$, there exists a derivation $D\colon\A\to\mathcal{B(X)}$ and a linear mapping $H\colon\A\to\mathbb{C}I$ vanishing on all $(n-1)$-commutators on $\A$ such that $L=D+H$. In particular, if $L$ is continuous, then $D$ and $H$ are continuous as well.
\end{theorem}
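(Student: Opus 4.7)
The plan is to use Lemmas~\ref{xxsec3.6}--\ref{xxsec3.10} to manufacture an inner derivation that accounts for the action of $L$ on rank-one operators, and then to show that the residual map is forced to take values entirely in $\mathbb{C}\i$; the decomposition claimed in the theorem will then follow from Proposition~\ref{xxsec2.5}.

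First I would fix $f_0\in\Xb(\A)\sm\{0\}$ and $y_0\in\I(f_0)$ (these exist by \ref{spade.annih neq 0} and \ref{spade.y ot f in A}), set $T:=\F_{f_0,y_0}$, and define $D_T\colon\A\to\mathcal{B(X)}$ by $D_T(A):=TA-AT$; boundedness of $T$ when $L$ is continuous follows from the remark after Lemma~\ref{xxsec3.6}. For any rank-one $x\ot f\in\A$, Lemma~\ref{xxsec3.7} yields $L(x\ot f)=\F_{f,y}x\ot f+x\ot\f_{f,y}+h(x,f)\i$; Lemma~\ref{xxsec3.9} lets me write $\F_{f,y}=T+\l(f,y)\i$ for a scalar $\l(f,y)$, and Lemma~\ref{xxsec3.10} gives $\f_{f,y}=-f\circ\F_{f,y}$. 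Expanding $[T,x\ot f]=Tx\ot f-x\ot T^*f$ then collapses the rank-one pieces exactly, leaving
\[
L(x\ot f)-D_T(x\ot f)=h(x,f)\i,
\]
so $\wt L:=L-D_T$ is a Lie $n$-derivation that sends every rank-one element of $\A$ into $\mathbb{C}\i$.

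The key remaining step is to propagate this scalar behaviour from rank-ones to every $A\in\A$. For $A\in\A$ and rank-one operators $R_2,\ldots,R_n\in\A$, the identity $[A,x\ot f]=(Ax)\ot f-x\ot(A^*f)$, combined with \ref{spade.all rank 1 op in A} and \ref{spade.Ax and A^*f}, shows by induction that $p_n(A,R_2,\ldots,R_n)$ is a finite linear combination of rank-one operators in $\A$; hence $\wt L(p_n(A,R_2,\ldots,R_n))\in\mathbb{C}\i$. Since each $\wt L(R_i)\in\mathbb{C}\i$ is central, the Lie $n$-derivation identity collapses to
\[
\wt L(p_n(A,R_2,\ldots,R_n))=p_n(\wt L(A),R_2,\ldots,R_n),
\]
and Remark~\ref{xxsec2.7} forces this common value to vanish. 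Now fix $R_2,\ldots,R_{n-1}$; the element $S:=p_{n-1}(\wt L(A),R_2,\ldots,R_{n-1})$ commutes with every rank-one $R_n\in\A$, and using the full stock of rank-ones $x\ot f\in\A$ with $x\in\mathcal{X}(\A)=\mathcal{X}$ and $f\in\Xb(\A)\sm\{0\}$ (supplied by \ref{spade.all rank 1 op in A} and \ref{spade.X(A)=X}) and evaluating at a vector $y$ with $f(y)=1$ shows $S\in\mathbb{C}\i$, whereupon Remark~\ref{xxsec2.7} gives $S=0$. Iterating this unwrapping from $k=n$ down to $k=2$ leaves $[\wt L(A),R_2]=0$ for every rank-one $R_2\in\A$, and one further application of the same evaluation argument yields $\wt L(A)\in\mathbb{C}\i$.

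Setting $D:=D_T$ and $H:=L-D$ then gives $L=D+H$, with $D$ a derivation from $\A$ into $\mathcal{B(X)}$ and $H\colon\A\to\mathbb{C}\i$ linear and continuous whenever $L$ is; Proposition~\ref{xxsec2.5} delivers the vanishing of $H$ on every $(n-1)$-th commutator. The main obstacle lies in the iterated unwrapping: each stage requires simultaneously that the intermediate commutators $p_k(A,R_2,\ldots,R_k)$ remain inside $\A$ and expand as sums of rank-ones (so that $\wt L$ is scalar-valued on them), and that Remark~\ref{xxsec2.7} be invoked to upgrade a central-valued conclusion to a zero one before the next unwrapping can be applied; once the rank-one stock guaranteed by \ref{spade.all rank 1 op in A}--\ref{spade.Ax and A^*f} is exploited correctly, the argument cascades cleanly to $\wt L(A)\in\mathbb{C}\i$.
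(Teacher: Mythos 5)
Your proposal is correct in substance, but it reaches the conclusion by a genuinely different route than the paper. The paper never isolates the clean identity $L(x\ot f)-[T,x\ot f]=h(x,f)\i$; instead it fixes $x$, chooses $f$ with $f(x)=0$, and expands $L(p_n(A,x\ot f,y\ot f,\dots,y\ot f))$ in two ways, carrying out a long term-by-term comparison (split into the cases $n=2$, $n=3$, $n>3$ and the parity of $n$) to extract the pointwise relation $(L(A)-[\F_{f,y},A])x=H_x(A)x$, after which it glues the scalars $H_x(A)$ over $x$. Your argument replaces all of that with two conceptual steps: Lemmas~\ref{xxsec3.7}--\ref{xxsec3.10} collapse $\wt L=L-D_T$ to a scalar on the rank-ones $x\ot f$ with $x\in\mathcal{X}(\A)$, $f\in\Xb(\A)$, and then the Lie $n$-derivation identity with rank-one test elements, unwrapped commutator by commutator, forces $\wt L(A)\in\mathbb{C}\i$ for every $A$. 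Both proofs produce the same derivation $D=[\F_{f_0,y_0},\cdot\,]$ (Lemma~\ref{xxsec3.9} makes the choice of $f_0,y_0$ irrelevant), but yours avoids the case analysis entirely and handles $n=2$ uniformly. What the paper's pointwise method buys in exchange is reusability: the same computational template is what drives Lemma~\ref{xxsec3.20} in the case $(\clubsuit)$, where one only has $\F_{\l,f,y}$ defined on each $N_\l$ and a global rank-one calculus of your kind is not available.

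Two points need tightening. First, "$\wt L$ sends every rank-one element of $\A$ into $\mathbb{C}\i$" is more than you prove and more than Lemma~\ref{xxsec3.7} gives: by Lemma~\ref{xxsec2.3} the rank-ones of $\alg\N$ are not all of the form $x\ot f$ with $f\in\Xb(\A)$; you must (and, in the sequel, implicitly do) restrict to the stock $\{x\ot f\mid x\in\mathcal{X}(\A),\,f\in\Xb(\A)\}$, which conditions \ref{spade.all rank 1 op in A} and \ref{spade.Ax and A^*f} show is closed under taking commutators with elements of $\A$. Second, when $L$ is not assumed continuous the operator $T=\F_{f_0,y_0}$, and hence $\wt L(A)$, is a priori only in ${\rm Hom}(\mathcal{X},\mathcal{X})$, so Remark~\ref{xxsec2.7} (whose proof via Lemma~\ref{xxsec2.6} uses norms) does not formally apply to $p_k(\wt L(A),R_2,\dots,R_k)$. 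The conclusion you need survives for an elementary reason you should state: a commutator $[S,x\ot f]$ with a rank-one operator always has rank at most $2$, so each $p_k(\wt L(A),R_2,\dots,R_k)$ with $k\geqslant 2$ has rank at most $2$ and, lying in $\mathbb{C}\i$ with $\dim\mathcal{X}>2$, must vanish. With these repairs the cascade closes and $D=L-H$ lands in $\mathcal{B(X)}$ exactly as in the paper's final step.
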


\begin{proof}
For any $x\in \mathcal{X}\sm\{0\}$, by Lemma \ref{xxsec2.1}, there exists $f\in\Xb(\A)\sm\{0\}$ such that $f(x)=0$. For a fixed $y\in\I(f)$, let us define a mapping $D(\cdot)=[\F_{f,y},\cdot\,]\colon\A\to{\rm Hom}(\mathcal{X}, \mathcal{X})$. In view of Lemma \ref{xxsec3.9}, we can say that $D$ is a linear operator (bounded and $D\colon\A\to\mathcal{B(X)}$, if $L$ is continuous) which is not related with the choice of $f,y$. Let us first consider the case of  $n>2$. For an arbitrary $A\in\A$, by Lemma \ref{xxsec3.7}, a straightforward computation shows that
$$
\begin{aligned}
&L(p_n(A,x\ot f,y\ot f,\cdots,y\ot f))\\
\notag=&L(p_{n-1}(Ax\ot f-x\ot A^*f,y\ot f,\cdots,y\ot f))\\
\notag=&L(Ax\ot f-f(Ay)x\ot f-f(Ax)y\ot f)\\
\notag=&\F_{f,y} Ax\ot f+Ax\ot\f_{f,y}+h(Ax,f)\i-f(Ay)(\F_{f,y} x\ot f+x\ot\f_{f,y})\\
\notag&-f(Ax)\big(\F_{f,y} y\ot f+y\ot\f_{f,y}+h(y,f)\i\big).
\end{aligned}\eqno{(3.15)}
$$
On the other hand, we have
$$
\begin{aligned}
&L(p_n(A,x\ot f,y\ot f,\cdots,y\ot f))\\
\notag=&p_n(L(A),x\ot f,y\ot f,\cdots,y\ot f)+p_n(A,L(x\ot f),y\ot f,\cdots,y\ot f)\\
\notag&+\sum_{j=3}^np_n(A,x\ot f,\cdots,\mathop{L(y\ot f)}_{\substack{\uparrow\\ j\text{-th}}},\cdots,y\ot f).
\end{aligned}\eqno{(3.16)}
$$
We shall calculate those terms of (3.16) in turn. Let us see the first term in (3.16). 
\begin{align*}
&p_n(L(A),x\ot f,y\ot f,\cdots, y\ot f)\\
=&p_{n-1}(L(A)x\ot f-x\ot L(A)^*f,y\ot f,\cdots,y\ot f)\\
=&L(A)x\ot f-f(L(A)y)x\ot f-f(L(A)x)y\ot f.
\end{align*}
The second term in (3.16) is 
\begin{align*}
&p_n(A,L(x\ot f),y\ot f,\cdots,y\ot f)\\
=&p_n(A,\F_{f,y} x\ot f,y\ot f,\cdots,y\ot f)+p_n(A,x\ot\f_{f,y},y\ot f,\cdots,y\ot f)\\
=&\left\{\begin{array}{l l}
A\F_{f,y} x\ot f-f(Ay)\F_{f,y} x\ot f-f(A\F_{f,y} x)y\ot f&\\
+f(\F_{f,y}x)y\ot A^*f-\f_{f,y}(Ay)x\ot f-f(Ax)y\ot\f_{f,y},&\;(n \text{ is odd})\\
& \\
A\F_{f,y} x\ot f-f(Ay)\F_{f,y} x\ot f-f(A\F_{f,y} x)y\ot f&\\
+2f(Ay)f(\F_{f,y}x)y\ot f-f(\F_{f,y}x)y\ot A^*f&\\
-\f_{f,y}(Ay)x\ot f+f(Ax)y\ot\f_{f,y},&\;(n \text{ is even})
\end{array}\right..
\end{align*}
The remainder terms in $\sum$ (for the case of $3\leq j\leq n$) of (3.16) are 
\begin{align*}
&p_n(A,x\ot f,\cdots,\mathop{L(y\ot f)}_{\substack{\uparrow\\ j\text{-th}}},\cdots,y\ot f)\\
=&p_n(A,x\ot f,\cdots,\mathop{\F_{f,y} y\ot f} _{\substack{\uparrow\\ j\text{-th}}},\cdots,y\ot f)+p_n(A,x\ot f,\cdots,\mathop{y\ot\f_{f,y}} _{\substack{\uparrow\\ j\text{-th}}},\cdots,y\ot f)\\
=&\left\{\begin{array}{c l}
n=3,\;&-f(A\F_{f,y} y)x\ot f-f(Ax)\F_{f,y} y\ot f+Ax\ot\f_{f,y}\\
&-f(Ay)x\ot\f_{f,y}-\f_{f,y}(Ax)y\ot f+\f_{f,y}(x)y\ot A^*f,\\
&\\
n>3,\;n \text{ is odd},\;&\left\{\begin{array}{c l}
j=3,\;&-f(A\F_{f,y} y)x\ot f-f(Ax)\F_{f,y} y\ot f\\
&+\f_{f,y}(x)f(Ay)y\ot f-f(Ax)y\ot \f_{f,y}\\
&-\f_{f,y}y\ot A^*f,\\
3<j<n,\;& 0,
\end{array}\right.\\
&\\
n>3,\;n \text{ is even},\;&\left\{\begin{array}{c l}
j=3,\;&-f(A\F_{f,y} y)x\ot f-f(Ax)\F_{f,y} y\ot f\\
&-\f_{f,y}(x)f(Ay)y\ot f+f(Ax)y\ot \f_{f,y}\\
&+\f_{f,y}(x)y\ot A^*f,\\
3<j<n,\;& 0,
\end{array}\right.\\
&\\
n>3,\;j=n,\;& Ax\ot\f_{f,y}-f(Ay)x\ot \f_{f,y}-f(Ax)y\ot \f_{f,y}\\
&-\f_{f,y}(Ax)y\ot f+f(Ay)\f_{f,y}(x)y\ot f.
\end{array}\right.
\end{align*}
Combining equation (3.15) with equation (3.16) and considering their actions on $x$, we obtain
\begin{align*}
h(Ax,f)x-f(Ax)h(y,f)x\in\mathbb{C} y.
\end{align*}
Thus $h(Ax,f)-f(Ax)h(y,f)=0$. In the case of $n$ is odd with $n>3$. Using  (3.15) and (3.16) to act on $y$, we arrive at
\begin{align*}
&(L(A)+A\F_{f,y}-\F_{f,y}A)x-f(L(A)y)x-f(L(A)x)y-f(A\F_{f,y}x)y
\\
&+f(\F_{f,y}x)f(Ay)y-\f_{f,y}(Ay)x+\f_{f,y}(x)f(Ay)y-\f_{f,y}(x)f(Ay)y\\
&-\f_{f,y}(Ax)y+f(Ay)\f_{f,y}(x)y=0.
\end{align*}
In the case of $n$ is even with $n>3$. Taking into account the actions of  (3.15) and (3.16) on $y$, we get
\begin{align*}
&(L(A)+A\F_{f,y}-\F_{f,y}A)x-f(L(A)y)x-f(L(A)x)y-f(A\F_{f,y}x)y\\
&+2f(Ay)f(\F_{f,y}x)y-f(\F_{f,y}x)f(Ay)y-\f_{f,y}(Ay)x-\f_{f,y}(x)f(Ay)y\\
&+\f_{f,y}(x)f(Ay)y-\f_{f,y}(Ax)y+f(Ay)\f_{f,y}(x)y=0.
\end{align*}
For the case of $n=3$, by an analogous calculation we have
\begin{align*}
&(L(A)+A\F_{f,y}-\F_{f,y}A)x-f(L(A)y)x-f(L(A)x)y-f(A\F_{f,y}x)y\\
&+f(\F_{f,y}x)f(Ay)y-\f_{f,y}(Ay)x-f(A\F_{f,y}y)x-\f_{f,y}(Ax)y\\
&+\f_{f,y}(x)f(Ay)y=0.
\end{align*} 
Therefore, for the case of $n>3$, there exist $a_{A,f,y},\;b_{A,f,y}\in\mathbb{C}$ such that
$$
\begin{aligned}
(L(A)+A\F_{f,y}-\F_{f,y}A)x+a_{A,f,y}x+b_{A,f,y}y=0.
\end{aligned}\eqno{(3.17)}
$$
Since $\dim \mathcal{X}>2$, $\dim\ker f>1$. Thus there exists $z\in\ker f$ such that $z,x$ are linearly independent. Then $x,\;y,\;y+z$ are linearly independent. Indeed, if $ax+by+c(y+z)=0$ ($a,b,c\in\mathbb{C}$), then $ax+(b+c)y+cz=0$. While $y\notin\ker f$, and hence $b+c=0$. By the fact that $x,z$ are linearly independent, it follows that $a=b=c=0$. Note that $y+z\in\I(f)$. Repeating the above discussion again gives 
$$
\begin{aligned}
(L(A)+A\F_{f,y+z}-\F_{f,y+z}A)x+a_{A,f,y+z}x+b_{A,f,y+z}(y+z)=0.
\end{aligned}\eqno{(3.18)}
$$
Note that $D(A)=[\F_{f,y},A]=[\F_{f,y+z},A]$. By (3.17) and (3.18) we conclude 
$$
(a_{A,f,y}-a_{A,f,y+z})x+b_{A,f,y}y-b_{A,f,y+z}(y+z)=0,
$$
which entails $b_{A,f,y}=0$. It follows from (3.17) that $(L(A)-D(A))x\in\ker f$. For an arbitrary $y'\in\I(f)$, applying $f$ to the following equation 
\[
(L(A)+A\F_{f,y'}-\F_{f,y'}A)x+a_{A,f,y'}x+b_{A,f,y'}y'=0,
\]
we then obtain $b_{A,f,y'}=0$. So $(L(A)-D(A))x=-a_{A,f,y'}x$. The arbitrariness of $y'$ shows that $a_{A,f,y}$ is not related with the choice of $y\in\I(f)$. Thus we denote it by $a_{A,f}$. Similarly, changing $f\in\Xb(\A)\sm\{0\}$ satisfying the condition $f(x)=0$, we can see that $a_{A,f}$ is merely related with the choice of $x$. Thus we may write $H_x(A):=-a_{A,f}$. Therefore the following equality holds:
$$
\begin{aligned}
L(A)x=D(A)x+H_x(A)x, \ \ \forall x\in \mathcal{X}.
\end{aligned}\eqno{(3.19)}
$$

For the case of $n=2$, if $A^*f\neq 0$, fixing $y'\in\I(A^*f)$, by Lemma \ref{xxsec3.9} we assert that $\F_{A^*f,y'}-\F_{f,y}=c\i$. Let us put  $\p_{A^*f}=\f_{f,y}+cf$. Furthermore, we can calculate 
\begin{align*}
L(p_2(A,x\ot f))=&L(Ax\ot f-x\ot A^*f)\\
=&\F_{f,y} Ax\ot f+Ax\ot\f_{f,y}+h(Ax,f)\i\\
&-\F_{f,y} x\ot A^*f-x\ot\p_{A^*f}-h(x,A^*f)\i
\end{align*}
and
\begin{align*}
L(p_2(A,x\ot f))=&p_2(L(A),x\ot f)+p_2(A,L(x\ot f))\\
=&L(A)x\ot f-x\ot L(A)^*f+A\F_{f,y} x\ot f\\
&+Ax\ot\f_{f,y}-\F_{f,y} x\ot A^*f-x\ot A^*\f_{f,y}.
\end{align*}
Thus we arrive at 
$$
\begin{aligned}
(L(A)+A\F_{f,y}-\F_{f,y} A)x\ot f=&x\ot L(A)^*f+x\ot A^*\f_{f,y}+h(Ax,f)\i\\
\notag&-x\ot \p_{A^*f}-h(x,A^*f)\i.
\end{aligned}\eqno{(3.20)}
$$
By an analogous manner, we also have (Let us write $y_1=y$, $y_2=y+x$)
$$
\begin{aligned}
(L(A)+A\F_{f,y}-\F_{f,y} A)y_i\ot f=&y_i\ot L(A)^*f+y_i\ot A^*\f_{f,y}+h(Ay_i,f)\i\\
\notag&-y_i\ot \p_{A^*f}-h(y_i,A^*f)\i.
\end{aligned}\eqno{(3.21)}
$$
If $A^*f=0$, adopting similar discussion we can also obtain equation (3.20) and equation (3.21) only if we substitute $0$ for $\p_{A^*f}$ and $h(\cdot ,A^*f)$. Applying (3.20) to $y$ yields
$$
\begin{aligned}
(L(A)+A\F_{f,y}-\F_{f,y} A)x=&\big(f(L(A)y)+\f_{f,y}(Ay)-\p_{A^*f}(y)\big) x\\
\notag&+\big(h(Ax,f)-h(x,A^*f)\big)y.
\end{aligned}\eqno{(3.22)}
$$
Applying (3.21) to $x$ gives
\begin{align*}
0=\big(f(L(A)x)+\f_{f,y}(Ax)-\p_{A^*f}(x)\big)y_i+\big(h(Ay_i,f)-h(y_i,A^*f)\big)x.
\end{align*}
It is not difficult to see that $x$ and $y_i(i=1,2)$ are linearly independent.  So $h(Ay_1,f)-h(y_1,A^*f)=0$ and $h(Ay_2,f)-h(y_2,A^*f)=0$. Their difference gives $h(Ax,f)-h(x,A^*f)=0$. By invoking (3.22) we know that  $f(L(A)y)+\f_{f,y}(Ay)-\p_{A^*f}(y)$ is merely related with the choice of $x$ and $A$, and we denote it by $H_x(A)$. Therefore for any $x\in \mathcal{X}$, the equality (3.19) still holds true in the case of $n=2$. That is, 
$$
\begin{aligned}
L(A)x=[\Phi_{f, y}, A]x+H_x(A)x, \ \ \forall x\in \mathcal{X}.
\end{aligned}
$$

Finally, for $x_j\in \mathcal{X}\sm\{0\}$ $(j=1,2)$ and $f_j\in\Xb(\A)\sm\{0\}$ satisfying $f_j(x_j)=0$ (by Lemma \ref{xxsec2.1} there are such $f_j$'s), since $\ker f_1\cap\ker f_2$ has codimension no more than $2$, there is $z\in(\ker f_1\cap\ker f_2)\sm\{0\}$. By what has been proved, we assert $H_{x_1}(A)=-a_{A,f_1}=H_z(A)=-a_{A,f_2}=H_{x_2}(A)$. This shows that  $H_x(A)$ is not related to the choice of $x$. One can establish a mapping $H\colon\A\to\mathbb{C}\i$ by $H(A)x:=H_x(A)x\;(\forall x\in \mathcal{X})$. In light of (3.19), we know that $H$ is linear (and continuous if $L$ is continuous). Then by (3.19) again, one can see $D=L-H$, which implies $D\colon\A\to\mathcal{B(X)}$. It follows from Proposition \ref{xxsec2.5} that $H$ vanishes on all $(n-1)$-commutators on $\A$. Thus $L=D+H$ is the desired  standard form.
\end{proof}

\begin{coro}\label{xxsec3.12}
Let $\mathcal{X}$ be a Banach space with $\dim \mathcal{X}>2$ and $\N$ be a non-trivial nest on $\mathcal{X}$. If $\mathcal{X}_-\sneq \mathcal{X}$, then each Lie-type derivation on ${\rm Alg}\N$ is of standard form. That is, for any Lie $n$-derivation $L\colon{\rm Alg}\N\to\mathcal{B(X)}$, there exists a derivation $D\colon{\rm Alg}\N\to\mathcal{B(X)}$ and a linear mapping $H\colon{\rm Alg}\N\to\mathbb{C}I$ vanishing on all $(n-1)$-commutators on ${\rm Alg}\N$ such that $L=D+H$. In particular, if $L$ is continuous, then $D$ and $H$ are continuous as well.
\end{coro}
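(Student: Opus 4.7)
The plan is to reduce the corollary to the two main theorems already proved, namely Theorem~\ref{xxsec3.3} (the case $\dim\Xb=1$) and Theorem~\ref{xxsec3.11} (the case $\dim\Xb>1$), by verifying that $\A=\alg\N$ itself satisfies the required abstract hypotheses. Since the hypothesis $\Xm\sneq \mathcal{X}$ forces $\Xb\neq\{0\}$, we simply split according to whether $\dim\Xb=1$ or $\dim\Xb>1$, and in each case we need to check a short list of properties about rank-one operators in $\alg\N$.

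The central observation, which drives essentially everything, is the following consequence of Lemma~\ref{xxsec2.3}: taking $N=\mathcal{X}$ gives $N_-=\Xm$, so for \emph{any} $x\in \mathcal{X}$ and any $f\in\Xb$ the rank-one operator $x\ot f$ lies in $\alg\N$. In particular $\mathcal{X}(\alg\N)=\mathcal{X}$ and $\Xb(\alg\N)=\Xb$. I will record this once and reuse it throughout.

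In the case $\dim\Xb=1$, I need to check the hypotheses of Theorem~\ref{xxsec3.3}, namely \ref{spade.annih neq 0}, \ref{spade.y ot f in A}, $\Xm\seq \mathcal{X}(\A)$ and $\i\in\A$. The identity $\i$ obviously lies in $\alg\N$. The relation $\Xb(\alg\N)=\Xb\neq\{0\}$ gives \ref{spade.annih neq 0}, while $\mathcal{X}(\alg\N)=\mathcal{X}\supseteq\Xm$ gives the containment. For \ref{spade.y ot f in A} one picks any $y\in \mathcal{X}\sm\Xm$ (which exists since $\Xm\sneq \mathcal{X}$) and invokes the central observation above. Theorem~\ref{xxsec3.3} then delivers the desired decomposition directly.

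In the case $\dim\Xb>1$, I need to check all of \ref{spade.annih neq 0}--\ref{spade.Ax and A^*f} for $\A=\alg\N$. Conditions \ref{spade.annih neq 0}, \ref{spade.y ot f in A}, \ref{spade.all rank 1 op in A} and \ref{spade.X(A)=X} are immediate from the central observation plus $\Xm\sneq \mathcal{X}$. For \ref{spade.X(A)inters kerf}, since $\mathcal{X}(\alg\N)=\mathcal{X}$ and every $f_i\in\Xb\sm\{0\}$ is a nonzero functional, $\ker f_i$ is a hyperplane, and the hypothesis $\dim \mathcal{X}>2$ gives both $\dim\ker f_i>1$ and $\ker f_1\cap\ker f_2\neq\{0\}$ (since $\ker f_1\cap\ker f_2$ has codimension at most $2$ in $\mathcal{X}$). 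The only condition that really uses the nest structure is \ref{spade.Ax and A^*f}: $Ax\in \mathcal{X}$ is trivial, and $A^*f\in\Xb$ follows from the invariance $A\Xm\seq\Xm$, which holds because $\Xm\in\N$ and $A\in\alg\N$. Once all six conditions are verified, Theorem~\ref{xxsec3.11} yields the standard decomposition. The expected main obstacle is merely bookkeeping—making sure each of the six $\spadesuit$-conditions is justified in a single line—rather than any computation, so the writing should be short and direct.
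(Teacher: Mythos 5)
Your proposal is correct and follows essentially the same route as the paper: reduce to Theorem~\ref{xxsec3.3} when $\dim\Xb=1$ and to Theorem~\ref{xxsec3.11} when $\dim\Xb>1$, using Lemma~\ref{xxsec2.3} with $N=\mathcal{X}$ to see that $x\ot f\in\alg\N$ for all $x\in\mathcal{X}$ and $f\in\Xb$. The only cosmetic difference is that the paper splits on $\cdim\Xm\in\{1,>1\}$ and then deduces the corresponding value of $\dim\Xb$, whereas you split on $\dim\Xb$ directly (noting $\Xb\neq\{0\}$ via Hahn--Banach) and verify the $\spadesuit$-conditions explicitly, which the paper leaves implicit.
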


\begin{proof}
If $\cdim \Xm>1$, then by Hahn-Banach theorem we can construct $x_1,x_2\in\Xm$ and $f_1,f_2\in\Xb$ such that $f_i(x_j)=\delta_{ij}$, where $\delta_{ij}$ is the Kronecker sign. Thus $f_1$ and $f_2$ are linearly independent, which entails $\dim\Xb>1$. Then by Theorem \ref{xxsec3.11} and Lemma \ref{xxsec2.8},  we immediately get the conclusion.

If $\cdim\Xm=1$, then for any $f\in\Xb\sm\{0\}$, we have $\cdim\ker f=1$ and $\Xm\seq\ker f$. By Lemma \ref{xxsec2.2} it follows that $\Xm=\ker f$. Taking into account \cite[Proposition~1.1.1]{KadisonRingrose1} we get $\dim\Xb=1$. In view of Theorem \ref{xxsec3.3}, the proof is completed. 
\end{proof}

Let $\mathcal{X}$ be a normed space with a nest $\N$. We denote the null element of $\mathcal{X}^*$ by $\0$. Define $\N^*:=\{N^\bot\mid N\in\N\}$. It is not difficult to verify that $\N^*$ is a nest of $\mathcal{X}^*$. We will see that the condition $\Xm\sneq \mathcal{X}$ is dual with $\{\0\}\sneq\{\0\}_+$ in the following sense.

\begin{prop}\label{xxsec3.13}
Let $\N$ be a nest on a normed space $\mathcal{X}$. Then $\Xm\sneq \mathcal{X}$ if and only if $\{\0\}\sneq\{\0\}_+$.
\end{prop}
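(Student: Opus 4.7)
The plan is to establish the single algebraic identity $\{\0\}_+=\mathcal{X}_-^\bot$, after which the equivalence is immediate from the Hahn--Banach theorem (a closed subspace $\mathcal{V}$ of $\mathcal{X}$ equals $\mathcal{X}$ precisely when $\mathcal{V}^\bot=\{\0\}$).

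First I would unwind the definitions inside the dual nest $\N^*$. Since the bijection $N\mapsto N^\bot$ reverses inclusion and since $N$ is closed, one has $N^\bot=\{\0\}$ iff $N=\mathcal{X}$; consequently the elements of $\N^*$ strictly containing $\{\0\}$ are exactly the $N^\bot$ with $N\in\N$, $N\sneq\mathcal{X}$. Because the meet in $\N^*$ is intersection, this gives
\[
\{\0\}_+=\bigcap_{\substack{N\in\N\\ N\sneq\mathcal{X}}}N^\bot.
\]
On the other side, by the very definition, $\mathcal{X}_-=\ol{\lspan\bigcup\{N\in\N\colon N\sneq\mathcal{X}\}}$.

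The core step is then to verify the identity $\bigcap_{N\sneq\mathcal{X}}N^\bot=\mathcal{X}_-^\bot$. The inclusion $\supseteq$ is obvious because each such $N$ lies inside $\mathcal{X}_-$. For $\subseteq$, I would take $f$ in the intersection; it vanishes on every $N\sneq\mathcal{X}$, hence on the linear span of their union, and then by continuity of $f$ on the closure $\mathcal{X}_-$. This gives $\{\0\}_+=\mathcal{X}_-^\bot$.

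With that identity in hand, the two directions of the equivalence are immediate. If $\mathcal{X}_-\sneq\mathcal{X}$, Hahn--Banach (applied in the closed-subspace form, as in Lemma~\ref{xxsec2.1}) produces a non-vanishing $f\in\mathcal{X}_-^\bot=\{\0\}_+$, so $\{\0\}\sneq\{\0\}_+$. Conversely, any non-vanishing element of $\{\0\}_+=\mathcal{X}_-^\bot$ forces $\mathcal{X}_-\neq\mathcal{X}$, i.e.\ $\mathcal{X}_-\sneq\mathcal{X}$. I do not anticipate a serious obstacle here; the only thing to be a bit careful about is the use of continuity of $f$ when passing from the linear span of the $N$'s to its closure $\mathcal{X}_-$, and the observation that $N^\bot=\{\0\}$ characterises $N=\mathcal{X}$ among closed subspaces.
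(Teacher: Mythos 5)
Your proposal is correct and follows essentially the same route as the paper: the paper's argument also reduces to the identity $\Xb=\{\0\}_+$ (your $\{\0\}_+=\mathcal{X}_-^\bot$), proved by the same unwinding of the dual nest together with continuity of $f$ to pass from the union of the $N\sneq\mathcal{X}$ to the closed span $\Xm$, and then invokes Hahn--Banach for the forward implication. The only cosmetic difference is that you isolate the identity as a single displayed step, whereas the paper folds it into a chain of equivalences.
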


\begin{proof}
It follows from $\Xm\sneq \mathcal{X}$ that $\{\0\}=\mathcal{X}^\bot\sneq\Xb$, in which the strict inclusion relation is given by Hahn-Banach theorem. For any $f\in \mathcal{X}^*$, we have
\begin{align*}
f\in\Xb&\Leftrightarrow\forall x\in N\sneq \mathcal{X},\,f(x)=0\Leftrightarrow\forall N\sneq \mathcal{X},\, N\seq\ker f\\
&\Leftrightarrow\forall N\sneq \mathcal{X},\,f\in N^\bot\Leftrightarrow f\in\{\0\}_+.
\end{align*}
Thus $\Xb=\{\0\}_+$, and hence $\{\0\}\sneq\{\0\}_+$.

Take $f\in\{\0\}_+\sm\{\0\}$, and then for any $x\in N\sneq \mathcal{X}$, we have $f(x)=0$. The continuity of $f$ implies that $f\Big(\bigvee_{N\sneq \mathcal{X}}N\Big)=\{0\}$. Since $f\neq 0$, we get $f(\mathcal{X})\neq\{0\}$. Therefore $\Xm\sneq \mathcal{X}$.
\end{proof}

\begin{rem}\label{xxsec3.14}
By an analogous manner, one can show that $\mathcal{X}^*_-\sneq \mathcal{X}^*$ if $\{0\}\sneq\{0\}_+$. Indeed, for an arbitrary $f\in \mathcal{X}^*_-=\big(\bigvee_{N^\bot\sneq \mathcal{X}^*}N^\bot\big)$, we know that  $f=\lim_{n\to\infty}f_n$ holds true, where $f_n\in\bigcup_{N^\bot\sneq \mathcal{X}^*}N^\bot$. It follows that $f\in\big(\bigwedge_{N\neq\{0\}}N\big)^\bot$. Thus $\mathcal{X}^*_-\seq\{0\}_+^\bot\sneq\{0\}^\bot=\mathcal{X}^*$.
\end{rem}

For a nest $\N$ on a Banach space $\mathcal{X}$, we define $(\alg\N)^*=\{A^*\mid A\in\alg\N\}$. It is easy to figure out that $(\alg\N)^*$ is a subalgebra of $\alg\N^*$. Since $*\colon\mathcal{B(X)}\to\B(\mathcal{X}^*)$ is an isometry, the completeness of $\alg\N$ entails that $(\alg\N)^*$ is complete. Thus $(\alg\N)^*$ is a closed subalgebra of $\alg\N^*$.

\begin{lem}\label{xxsec3.15}
If $L\colon\alg\N\to \mathcal{B(X)}$ is a Lie $n$-derivation, then the mapping
\begin{align*}
L^*\colon(\alg\N)^*&\to\mathcal{B(X}^*)\\
A^*&\mapsto L(A)^*
\end{align*}
is a Lie $n$-derivation on $(\alg\N)^*$. If $L$ is continuous, then $L^*$ is continuous.
\end{lem}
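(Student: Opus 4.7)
The plan is to transfer the Lie $n$-derivation identity for $L$ through the adjoint operation, exploiting the fact that $*\colon\mathcal{B(X)}\to\mathcal{B(X}^*)$ is an isometric linear antimultiplicative embedding. First I would check that $L^*$ is well defined and linear: injectivity of $*$ makes the rule $L^*(A^*):=L(A)^*$ unambiguous, and linearity of $L$ together with linearity of the adjoint yields linearity of $L^*$.

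The core computation is a sign bookkeeping for how $p_n$ behaves under adjunction. Because $*$ is antimultiplicative, $[A,B]^* = B^*A^*-A^*B^* = -[A^*,B^*]$. A straightforward induction on $n$, using the recursion $p_n = [p_{n-1},x_n]$, then yields
\[
p_n(A_1,\cdots,A_n)^* = (-1)^{n-1}p_n(A_1^*,\cdots,A_n^*).
\]

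With this identity in hand, apply $*$ to both sides of the defining relation
\[
L(p_n(A_1,\cdots,A_n)) = \sum_{k=1}^n p_n(A_1,\cdots,L(A_k),\cdots,A_n).
\]
The left-hand side becomes $(-1)^{n-1}L^*(p_n(A_1^*,\cdots,A_n^*))$, using the sign identity and $L^*(B^*):=L(B)^*$; each summand on the right becomes $(-1)^{n-1}p_n(A_1^*,\cdots,L^*(A_k^*),\cdots,A_n^*)$. The common factor $(-1)^{n-1}$ cancels, which is precisely the Lie $n$-derivation rule for $L^*$ evaluated at $\{A_k^*\}$. Since every element of $(\alg\N)^*$ is of the form $A^*$, $L^*$ is a Lie $n$-derivation on $(\alg\N)^*$.

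For the continuity claim, since $*$ is an isometry, $\|L^*(A^*)\| = \|L(A)^*\| = \|L(A)\| \leqslant \|L\|\,\|A\| = \|L\|\,\|A^*\|$, so $\|L^*\|\leqslant\|L\|$ (with equality, in fact). The only possible obstacle is keeping the $(-1)^{n-1}$ signs together with the substitution $L(A_k)^* = L^*(A_k^*)$ consistent, but once the sign identity for $p_n$ is established, everything else is routine bookkeeping.
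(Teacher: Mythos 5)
Your proposal is correct and follows essentially the same route as the paper: both establish the sign identity $p_n(A_1,\cdots,A_n)^*=(-1)^{n-1}p_n(A_1^*,\cdots,A_n^*)$ from antimultiplicativity of the adjoint and then apply $*$ to the Lie $n$-derivation identity, cancelling the common sign, with continuity following from the isometry of $*$. Your additional remark on well-definedness via injectivity of $*$ is a small but welcome point the paper leaves implicit.
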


\begin{proof}
Linearity of $L^*$ is straightforward. If $L$ is continuous, since $*$ is continuous, the continuity of $L^*$ follows. For arbitrary elements $A_1,\cdots,A_n\in\alg\N$, we obtain
\begin{align*}
L^*(p_n(A_1^*,\cdots,A_n^*))=&(-1)^{n-1}L^*(p_n(A_1,\cdots,A_n)^*)\\
=&(-1)^{n-1}L(p_n(A_1,\cdots,A_n))^*\\
=&(-1)^{n-1}\Big(\sum_{i=1}^np_n(A_1,\cdots,L(A_i),\cdots,A_n)\Big)^*\\
=&(-1)^{n-1}\sum_{i=1}^np_n(A_1,\cdots,L(A_i),\cdots,A_n)^*\\
=&\sum_{i=1}^np_n(A_1^*,\cdots,L^*(A_i^*),\cdots,A_n^*). 
\end{align*}
This shows that $L^*$ is a Lie $n$-derivation.
\end{proof}

Note that $(x\ot f)^*=f\ot x^{**}\;(x\in \mathcal{X}, f\in \mathcal{X}^*)$, from which it is easy to see that if $\dim \mathcal{X}>2$ and $\{0\}\sneq\{0\}_+$. Then $\A:=(\alg\N)^*$, as a subalgebra of $\alg\N^*$,  satisfies \ref{spade.annih neq 0}--\ref{spade.Ax and A^*f}. Indeed, $(\mathcal{X}^*)^\bot_-(\A)=\{x^{**}\mid x\in \{0\}_+\}$ is a linear space, and hence $\A$ satisfies \ref{spade.annih neq 0}. Applying Remark \ref{xxsec3.14} yields \ref{spade.y ot f in A}. Considering Lemma \ref{xxsec2.3}, one can see that \ref{spade.X(A)=X} holds true (For any $f\in \mathcal{X}^*$, pick $x\in\{0\}_+\sm\{0\}$ and we get $(x\ot f)^*\in\A$); other conditions are easily to be verified.

\begin{coro}\label{xxsec3.16}
Let $\mathcal{X}$ be a Banach space with $\dim \mathcal{X}>2$ and $\N$ be a non-trivial nest on $\mathcal{X}$. If $\{0\}\sneq\{0\}_+$, then each Lie-type derivation on ${\rm Alg}\N$ is of standard form. More precisely, for any Lie $n$-derivation $L\colon{\rm Alg}\N\to\mathcal{B(X)}$, there exist a derivation $D\colon{\rm Alg}\N\to\mathcal{B(X)}$ and a linear mapping $H\colon{\rm Alg}\N\to\mathbb{C}I$ vanishing on all $(n-1)$-commutators on ${\rm Alg}\N$ such that $L=D+H$. In particular, if $L$ is continuous, then $D$ and $H$ are continuous as well.
\end{coro}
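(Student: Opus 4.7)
The plan is to dualize. The hypothesis $\{0\}\sneq\{0\}_+$ on $\N$ translates into $(\mathcal{X}^*)_-\sneq\mathcal{X}^*$ on the dual nest $\N^*$ by Remark~\ref{xxsec3.14}, so the problem becomes one already in reach of the subalgebra versions of Theorems~\ref{xxsec3.3} and~\ref{xxsec3.11} applied on $\mathcal{X}^*$ to the subalgebra $(\alg\N)^*$ of $\alg\N^*$. My main steps are: (i) pass through the adjoint via Lemma~\ref{xxsec3.15} to obtain the Lie $n$-derivation $L^*\colon(\alg\N)^*\to\mathcal{B}(\mathcal{X}^*)$; (ii) apply the appropriate one of Theorems~\ref{xxsec3.3} and~\ref{xxsec3.11} to $L^*$ on $\A:=(\alg\N)^*$; (iii) transport the resulting standard decomposition back through the injective isometric operation $*$ to get a standard decomposition of $L$.

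For step (ii), the text immediately preceding the statement already verifies that $\A$ satisfies conditions \ref{spade.annih neq 0}--\ref{spade.Ax and A^*f}, and $I_{\mathcal{X}^*}=I_\mathcal{X}^*\in\A$ is automatic. The dichotomy turns on the dimension $\dim(\mathcal{X}^*)^\bot_-(\A)=\dim\{0\}_+$, using the identification $(\mathcal{X}^*)^\bot_-(\A)=\{x^{**}\mid x\in\{0\}_+\}$ from that same paragraph. If $\dim\{0\}_+>1$, Theorem~\ref{xxsec3.11} applies directly to $L^*$. If instead $\dim\{0\}_+=1$, I would first compute $(\mathcal{X}^*)_-=\{0\}_+^\bot$ inside $\mathcal{X}^*$---every $N^\bot$ with $N\in\N$, $N\neq\{0\}$ sits inside $\{0\}_+^\bot$, while $\{0\}_+^\bot$ itself belongs to $\N^*$---whence $\dim(\mathcal{X}^*)^\bot_-=1$ and Theorem~\ref{xxsec3.3} applies instead, its subsidiary requirements $\mathcal{X}^*_-\seq\mathcal{X}^*(\A)$ and $I_{\mathcal{X}^*}\in\A$ following from \ref{spade.X(A)=X} and $I_\mathcal{X}\in\alg\N$. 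Either way I obtain a decomposition $L^*=D'+H'$ on $\A$ with $D'$ a derivation into $\mathcal{B}(\mathcal{X}^*)$ and $H'(A^*)=h(A)I_{\mathcal{X}^*}$ for a scalar-valued linear map $h$, with everything continuous if $L$ is.

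The final step is routine manipulation of adjoints. Rewriting $H'(A^*)=(h(A)I_\mathcal{X})^*$ via $I_{\mathcal{X}^*}=I_\mathcal{X}^*$ gives $D'(A^*)=L(A)^*-(h(A)I_\mathcal{X})^*=(L(A)-h(A)I_\mathcal{X})^*$, so setting $D(A):=L(A)-h(A)I_\mathcal{X}\in\mathcal{B(X)}$ and $H(A):=h(A)I_\mathcal{X}$ produces $L=D+H$, with $H$ linear (and continuous when $L$ is). The Leibniz identity for $D$ on $\alg\N$ descends from the Leibniz identity for $D'$ on $(\alg\N)^*$ through $(AB)^*=B^*A^*$ and the injectivity of $*$, and Proposition~\ref{xxsec2.5} then forces $H$ to vanish on each $(n-1)$-th commutator. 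The main delicate point I expect in execution is the clean dichotomy on $\dim\{0\}_+$ together with the identification $(\mathcal{X}^*)_-=\{0\}_+^\bot$; once those are in hand the rest of the argument goes through by the formal functorial properties of the adjoint.
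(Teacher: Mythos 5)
Your proposal is correct and follows essentially the same route as the paper: dualize $L$ to $L^*$ on $(\alg\N)^*$ via Lemma~\ref{xxsec3.15}, invoke Theorem~\ref{xxsec3.3} or Theorem~\ref{xxsec3.11} according to whether $\dim\{0\}_+$ equals or exceeds $1$, and pull the decomposition back through the injective isometry $*$ using $(AB)^*=B^*A^*$. The only difference is cosmetic: you make explicit the dichotomy on $\dim\{0\}_+$ and the identification $(\mathcal{X}^*)_-=\{0\}_+^\bot$ (which holds since $\{0\}_+^\bot$ is the largest proper element of $\N^*$), where the paper merely cites both theorems at once.
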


\begin{proof}
Let us write $\A=(\alg\N)^*$. As is illustrated in the above, we know that  $\dim(\mathcal{X}^*)^\bot_-(\A)\geqslant 1$. It follows from Theorem \ref{xxsec3.3} and Theorem \ref{xxsec3.11} that the Lie $n$-derivation $L^*$ has the standard form, i.e., there is a derivation $D'\colon\A\to\B(\mathcal{X}^*)$ and linear mapping $H'\colon\A\to\mathbb{C}\i^*$ vanishing on all $(n-1)$-th commutators on $(\alg\N)^*$ such that $L^*=D'+H^\prime$ (by Lemma \ref{xxsec3.15}, $D^\prime$ and $H^\prime$ are continuous when $L$ is continuous). Let us define  $H(A)^*:=H'(A^*)$. It is easy to check that $H\colon\alg\N\to\mathbb{C}\i$ is a linear mapping (is also continuous when $L$ is continuous). For any $A,B\in\alg\N$, we have
\begin{align*}
\big((L-H)(AB)\big)^*=&(L^*-H')(B^*A^*)\\
=&D'(B^*A^*)=D'(B^*)A^*+B^*D'(A^*)\\
=&\big((L-H)(B)\big)^*A^*+B^*\big((L-H)(A)\big)^*\\
=&\big((L-H)(A)B+A(L-H)(B)\big)^*.
\end{align*}
Since $*$ is isometric, it follows that
$$
(L-H)(AB)=(L-H)(A)B+A(L-H)(B). 
$$
Thus $D=L-H$ is a derivation (is also continuous if $L$ is continuous). 
\end{proof}

\subsection{The case of $\dim\Xb=0$}
\ \\

By Theorem \ref{xxsec3.3}, Corollary \ref{xxsec3.5}, Theorem \ref{xxsec3.11}, Corollary \ref{xxsec3.12} and Corollary \ref{xxsec3.16}, it suffices to consider the case of
\begin{enumerate}[fullwidth,itemindent=9em,label=$(\clubsuit)$]
\item $\dim \mathcal{X}>2$, $\Xm=\mathcal{X}$ and $\{0\}=\{0\}_+$. \label{clubsuit.dim X>2, X-=X,etc.}
\end{enumerate}
Then there is a net $\{N_\l\}_{\l\in\L}\seq\N$ satisfying $\bigvee_{\l\in\L}N_\l=\mathcal{X}$ (e.g., take $\L=\N$, $N_\l=\l\in\N$).

\begin{lem}\label{xxsec3.17}
Let $\mathcal{X}$ be a normed space and $\N$ be a non-trivial nest of $\mathcal{X}$ satisfying \ref{clubsuit.dim X>2, X-=X,etc.}. Then the following statements are true: 
\begin{enumerate}
\item[{\rm (1)}] For any $N\in\N\sm\{\{0\}, \mathcal{X}\}$, we have $\dim N=\infty$, $\cdim N=\infty$.
\item[{\rm (2)}] For any $N\in\N\sm\{\{0\}, \mathcal{X}\}$ and $f\in\ \mathcal{X}^*$, we have $\dim N\cap\ker f=\infty$.
\item[{\rm (3)}] For any net $\{N_\l\}_{\l\in\L}\seq\N$ satisfying $\bigvee_{\l\in\L}N_\l=\mathcal{X}$ and $f\in \mathcal{X}^*\sm\{0\}$, there exists $\a\in\L$ such that $N_\a\not\seq\ker f$.
\end{enumerate}
\end{lem}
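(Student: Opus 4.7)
The plan is to prove the three parts in order, using the two conditions $\{0\} = \{0\}_+$ and $\Xm = \mathcal{X}$ from \ref{clubsuit.dim X>2, X-=X,etc.} in a complementary way.

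For part (1), I will construct an infinite strictly decreasing chain inside $\N$ below $N$ and an infinite strictly increasing chain above $N$. The condition $\{0\}_+ = \bigcap\{M \in \N \colon M \neq \{0\}\} = \{0\}$ rules out the existence of a minimal nonzero element of $\N$: any such minimal $M_0$ would satisfy $M_0 \seq \{0\}_+ = \{0\}$. Hence for every nonzero $M \in \N$ there exists $M' \in \N$ with $\{0\} \sneq M' \sneq M$. Iterating from $N$ yields $N \supsetneq M_1 \supsetneq M_2 \supsetneq \cdots$, all nonzero and contained in $N$. If $\dim N$ were finite, the values $\dim M_i$ would form a strictly decreasing sequence of nonnegative integers, which is impossible. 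Dually, $\Xm = \mathcal{X}$ excludes a maximal proper element of $\N$ (such an $M_0 \sneq \mathcal{X}$ would force $\mathcal{X} = \Xm \seq M_0$), and iterating produces $N \sneq N_1 \sneq N_2 \sneq \cdots \sneq \mathcal{X}$; the induced strictly increasing chain in $\mathcal{X}/N$ then forces $\cdim N = \infty$.

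Part (2) is immediate from (1): the restriction $f|_N$ maps into $\mathbb{C}$, so the codimension of $N \cap \ker f$ in $N$ is at most $1$; together with $\dim N = \infty$, this yields $\dim(N \cap \ker f) = \infty$.

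Part (3) I argue by contradiction. Suppose $N_\l \seq \ker f$ for every $\l \in \L$. Then $\bigcup_\l N_\l \seq \ker f$, so $\lspan \bigcup_\l N_\l \seq \ker f$; continuity of $f$ makes $\ker f$ closed, so $\bigvee_\l N_\l = \ol{\lspan \bigcup_\l N_\l} \seq \ker f$. By hypothesis the left side equals $\mathcal{X}$, forcing $f = 0$, contradicting $f \neq 0$. The only mildly subtle step is extracting ``no minimal nonzero element'' and ``no maximal proper element'' from \ref{clubsuit.dim X>2, X-=X,etc.}; once these are in hand the rest is routine dimension counting and continuity.
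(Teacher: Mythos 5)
Your proof is correct and follows essentially the same route as the paper: both parts of (1) rest on extracting the nonexistence of a minimal nonzero (resp.\ maximal proper) element of $\N$ from $\{0\}=\{0\}_+$ (resp.\ $\Xm=\mathcal{X}$), (2) is the same codimension-one observation the paper realizes via $(\i-y\ot f)N$, and (3) is the identical contradiction via closedness of $\ker f$. The only difference is cosmetic: you build infinite chains directly, while the paper argues by contradiction from finiteness.
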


\begin{proof}
(1) We use reduction to absurdity. Suppose that $N_0\in\N\sm\{\{0\}, \mathcal{X}\}$ is of finite dimension. By $\{0\}=\{0\}_+=\bigwedge\{N\in\N\mid \{0\}\sneq N\}$ and the total-orderedness of $\N$ it follows that there exists $m\in\Nn$ and $N_1,\cdots,N_m\in\N\sm\{\{0\}\}$ with $N_j\seq N_0$, such that $\{0\}=\bigwedge_{j=0}^n N_j$, while $\dim\bigwedge_{j=0}^n N_j\geqslant 1$, which is a contradiction.

Suppose that $N_0\in\N\sm\{\{0\}, \mathcal{X}\}$ is of finite codimension. Then the set $\{N\in\N\mid N_0\sneq N\sneq \mathcal{X}\}$ is a finite set. Indeed, for $N_0\sneq N$, by the fact $\mathcal{X}/N\cong(\mathcal{X}/N_0)/(N/N_0)$, $\dim \mathcal{X}/N$ has at most $\dim \mathcal{X}/N_0=\cdim N_0<\infty$ possible choices. Moreover, since $\N$ is totally ordered, we see that $\mathcal{X}/N$ has at most $\cdim N_0$ possible choices. Let us write $\{N\in\N\mid N_0\sneq N\sneq \mathcal{X}\}=\{N_j\}_{j=0}^m$, which is an increasing sequence under inclusion relation. Then
$$
\mathcal{X}_-=\ol{\lspan\bigcup\{N\in\N\mid N\sneq \mathcal{X}\}}=\ol{\lspan\bigcup_{j=0}^mN_j}=N_m\sneq \mathcal{X},
$$
which contradicts with $\mathcal{X}_-=\mathcal{X}$.

(2) If $N\seq\ker f$, then by (1) we have $\dim N\cap\ker f=\dim N=\infty$. If $N\not\seq\ker f$, then there exists $y\in N$ such that $f(y)=1$. Thus for any  $x\in N$, $x-f(x)y\in\ker f$, namely $(\i-y\ot f)\mathcal{X}\seq \ker f$. Therefore $(\i-y\ot f)N\seq N\cap\ker f$, where $(\i-y\ot f)N$ is an infinite-dimensional subspace. Henceforth we assert $\dim N\cap\ker f=\infty$.

(3) We use reduction to absurdity. If $N_\l\seq\ker f$ holds true for any  $\l\in\L$, then $\bigvee_{\l\in\L}N_\l=\mathcal{X}$. This implies that $\ker f=\mathcal{X}$, a contradiction.
\end{proof}

Without loss of generality, we next assume that $\{N_\l\}_{\l\in\L}\seq\N\sm\{\{0\}, \mathcal{X}\}$ is a net satisfying $\bigvee_{\l\in\L}N_\l=\mathcal{X}$. For $f\in \mathcal{X}^*\sm\{0\}$, put $\I(f):=\{y\in \mathcal{X}\mid f(y)=1\}\neq\varnothing$.

\begin{lem}\label{xxsec3.18}
Let $\mathcal{X}$ be a Banach space and $\N$ be a non-trivial nest satisfying \ref{clubsuit.dim X>2, X-=X,etc.}. If $L\colon\alg\N\to\mathcal{B(X)}$ is a Lie $n$-derivation, then for any $\l\in\L$, there exists a bilinear functional $h_\l\colon N_\l\times (N_\l)_-^\bot\to\mathbb{C}$ such that 
$$
(L(x\ot f)-h_\l(x,f)\i)(N_\l\cap\ker f)\seq\mathbb{C} x,  \ \ \forall x\in N_\l, \ \ \forall f\in (N_\l)_-^\bot.
$$
In particular, if $L$ is continuous, then $h_\l(x,f)$ is continuous for $x$.
\end{lem}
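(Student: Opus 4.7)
The plan is to mirror, within the subspace $N_\l$, the rank-one computation carried out in the proof of Lemma~\ref{xxsec3.6}. Two ingredients make this localization viable: first, Lemma~\ref{xxsec2.3} guarantees $x\ot f\in\alg\N$ whenever $x\in N_\l$ and $f\in(N_\l)_-^\bot$, so $L$ may legitimately be applied to every such rank-one operator; second, Lemma~\ref{xxsec3.17}(2) gives $\dim(N_\l\cap\ker f)=\infty$ for every non-zero $f\in \mathcal{X}^*$, taking the place of condition~\ref{spade.X(A)inters kerf} from the preceding subsection. Accordingly, I would set
\[
h_\l(x,f):=\begin{cases}0,&x\in\ker f,\\ f(x)^{-1}f(L(x\ot f)x),&x\notin\ker f,\end{cases}
\]
and expect this formula to be forced by the Lie $n$-derivation identity exactly as in Lemma~\ref{xxsec3.6}.

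Fix $f\in(N_\l)_-^\bot\setminus\{0\}$ with $f|_{N_\l}\neq 0$, and pick $y\in N_\l$ with $f(y)=1$; then $y\ot f\in\alg\N$. If $x\in N_\l\cap\ker f$, use $p_n(x\ot f,y\ot f,\ldots,y\ot f)=x\ot f$ to expand $L(x\ot f)$ via the Lie $n$-derivation rule and evaluate at an arbitrary $z\in N_\l\cap\ker f$, yielding
\[
L(x\ot f)z=(-1)^{n-1}f(L(x\ot f)z)\,y+f(L(y\ot f)z)\,x.
\]
Repeating with $y':=y+v$, where $v\in N_\l\cap\ker f$ is linearly independent from $x$ (available by Lemma~\ref{xxsec3.17}(2)), gives the same identity with $y'$ in place of $y$; the linear independence of $\{x,y,y'\}$ then forces $f(L(x\ot f)z)=0$, so $L(x\ot f)z\in\mathbb{C}x$, as required with $h_\l(x,f)=0$. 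If $x\notin\ker f$, apply instead the expansion of $L(p_n(z\ot f,x\ot f,\ldots,x\ot f))=f(x)^{n-1}L(z\ot f)$ for $z\in N_\l\cap\ker f$ to the vector $x$, invoke the inductive identity~(3.12) from the proof of Lemma~\ref{xxsec3.6}, and read off $(L(x\ot f)-h_\l(x,f)\i)z\in\mathbb{C}x$.

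Bilinearity of $h_\l$ then follows exactly as in the last part of the proof of Lemma~\ref{xxsec3.6}: fix one argument, and apply the just-derived action formulas for $L(x_1\ot f)$, $L(x_2\ot f)$, $L((ax_1+bx_2)\ot f)$ (respectively, the analogous triple in the $f$-variable) to a test vector $z$ chosen in $N_\l\cap\ker f$ (respectively, $N_\l\cap\ker f_1\cap\ker f_2$) and linearly independent from the remaining vectors in play; an iterated application of Lemma~\ref{xxsec3.17}(2) secures the abundance of such $z$. Continuity of $x\mapsto h_\l(x,f)$ when $L$ is continuous is then immediate from the explicit formula on the open set $\{x\in N_\l:f(x)\neq 0\}$ together with linearity.

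The main obstacle is the degenerate sub-case in which $(N_\l)_-=N_\l$, so that $(N_\l)_-^\bot=N_\l^\bot$ and no $y\in N_\l$ satisfies $f(y)=1$; the two-$y$ comparison then cannot be carried out inside $N_\l$. In that situation one must replace $y\in N_\l$ by a vector $y\in(N_\l)_+$ (or in some larger element of $\N$) on which $f$ does not vanish. Such a vector exists by Lemma~\ref{xxsec3.17}(3) applied to a net in $\N$ with join $\mathcal{X}$, and the membership $y\ot f\in\alg\N$ is then checked via the second characterization in Lemma~\ref{xxsec2.3}; with $y$ so chosen, the same $p_n$-expansion and comparison argument go through after minor bookkeeping.
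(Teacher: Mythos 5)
There is a genuine gap here, and it is not confined to the ``degenerate sub-case'' you flag at the end --- under condition \ref{clubsuit.dim X>2, X-=X,etc.} it is the typical situation. Your whole argument is driven by a rank-one operator $y\ot f$ with $f(y)=1$ lying in $\alg\N$, so that $p_n(x\ot f,y\ot f,\dots,y\ot f)=x\ot f$ can be expanded by the Lie $n$-derivation rule. By Lemma \ref{xxsec2.3}, $y\ot f\in\alg\N$ forces $y\in N$ and $f\in N_-^\bot$ for some $N\in\N$, so $f(y)\neq 0$ is possible only if $f$ fails to vanish on some $N$ whose predecessor $N_-$ it annihilates. For a continuous nest (e.g.\ the Volterra nest, the model example with $\Xm=\mathcal{X}$ and $\{0\}_+=\{0\}$) one has $N_-=N$ for every $N$, so \emph{every} rank-one operator $u\ot g$ in $\alg\N$ satisfies $g(u)=0$: the element $y\ot f$ you need simply does not exist, for any $f\in(N_\l)_-^\bot\sm\{0\}$. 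Your proposed repair --- taking $y$ in $(N_\l)_+$ or in some larger $M\in\N$ with $f(y)\neq 0$ --- does not restore membership in $\alg\N$: the second characterization in Lemma \ref{xxsec2.3} requires $y\in N_+$ and $f\in N^\bot$ for some $N$, and with $(N_\l)_+=N_\l$ this again forces $f(y)=0$, while for a strictly larger $M$ one has $f\notin M_-^\bot$ in general. The same obstruction undermines the sub-case $x\notin\ker f$ and the bilinearity and comparison steps, all of which re-use $y\ot f$ or $x\ot f$ with $f(x)\neq0$.

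The paper's proof avoids rank-one ``idempotents'' altogether. It chooses a second functional $f_1\in(N_\l)_-^\bot$ linearly independent of $f$ (possible because $\dim(N_\l)_-^\bot=\infty$, via Lemma \ref{xxsec3.17}) and works with the square-zero operators $x\ot f$ and $z\ot f_1$ together with an arbitrary $A\in\alg\N$: expanding $L(p_n(x\ot f,z\ot f_1,A,\dots,A))$ and invoking Lemmas \ref{xxsec2.6} and \ref{xxsec2.8} yields the operator identity $[L(x\ot f),z\ot f_1]+[x\ot f,L(z\ot f_1)]=0$, which is then evaluated at a \emph{test vector} $y_1\in\mathcal{X}$ with $f(y_1)=0$ and $f_1(y_1)=1$; crucially, $y_1$ never has to generate an element of the algebra. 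This gives $L(x\ot f)z=f_1(L(x\ot f)y_1)z-f(L(z\ot f_1)y_1)x$, whence the scalar $h_\l(x,f):=f_1(L(x\ot f)y_1)$ --- a formula different from your $f(x)^{-1}f(L(x\ot f)x)$ --- and well-definedness and bilinearity are checked by varying $(f_1,y_1)$ and using the infinite-dimensionality of $N_\l\cap\ker f$. If you wish to salvage your plan, replacing the idempotent $y\ot f$ by the pair consisting of $z\ot f_1$ and the external evaluation point $y_1$ is the missing idea.
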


\begin{proof}
For any $x\in N_\l$ and $f\in (N_\l)_-^\bot$, by Lemma \ref{xxsec3.17} we know that $\dim (N_\l)_-^\bot=\infty$. Then there exists $f_1\in(N_\l)_-^\bot$ such that $f_1, f$ are linearly independent. By \cite[Proposition 1.1.1]{KadisonRingrose1}, there exists $y_1\in \mathcal{X}$ such that $f(y_1)=0$ and $f_1(y_1)=1$. For any $A\in\alg\N$ and $z\in N_\l\cap\ker f$, if $n>2$, it follows that
\begin{align*}
0=&L(p_n(x\ot f,z\ot f_1,A,\cdots,A))\\
=&p_n(L(x\ot f),z\ot f_1,A,\cdots,A)+p_n(x\ot f,L(z\ot f_1),A,\cdots,A)\\
=&p_{n-1}([L(x\ot f),z\ot f_1]+[x\ot f,L(z\ot f_1)],A\,\cdots,A).
\end{align*}
By invoking Lemma \ref{xxsec2.6} and Lemma \ref{xxsec2.8} we conclude
$$
[L(x\ot f),z\ot f_1]+[x\ot f,L(z\ot f_1)]=c\i,
$$
where $c\in\mathbb{C}$. Since $\i$ is of infinite rank, we have $c=0$. Thus
$$
[L(x\ot f),z\ot f_1]+[x\ot f,L(z\ot f_1)]=0.\eqno{(3.23)}
$$
If $n=2$, (3.23) can be achieved by calculating $0=L(p_2(x\ot f,z\ot f_1))$. Considering the action of (3.23) on $y_1$, we obtain
\[
L(x\ot f)z=f_1(L(x\ot f)y_1)z-f(L(z\ot f_1)y_1)x.
\]
We next illustrate that the equality $f_1(L(x\ot f)y_1)=f_2(L(x\ot f)y_2)$ holds true, where $f_j\in (N_\l)_-^\bot$ is linearly independent with $f$ and $y_j\in\I(f)\cap\ker f_j$ $(j=1,2)$. By what has been proved, we get
$$
L(x\ot f)z=f_j(L(x\ot f)y_j)z-f(L(z\ot f_j)y_j)x\quad(j=1,2). \eqno{(3.24)}
$$
Since $N_\l\cap\ker f$ is of infinite dimension, we can take $z'\in N_\l\cap\ker f$ such that $z', x$ are linearly independent. Similarly, we have
$$
L(x\ot f)z'=f_j(L(x\ot f)y_j)z'-f(L(z'\ot f_j)y_j)x\quad(j=1,2).\eqno{(3.25)}
$$
Then by linear independence, the above two equalities (3.24) and (3.25)  entail $f_1(L(x\ot f)y_1)=f_2(L(x\ot f)y_2)$. Let us define $h_\l(x,f)=f_1(L(x\ot f)y_1)$, which is well-defined by the above deduction. It is clear that $h_\l(x,f)$ is continuous for $x\in N_\l$ if $L$ is continuous.

Let us show the bilinearity of $h_\l$. In view of our above construction, linearity of $h_\l(x,f)$ regarding to $x$ is straightforward. For any $f_1,f_2\in (N_\l)_-^\bot$, if $f_1$ and $f_2$ are linearly dependent, the case is trivial; if $f_1$ and $f_2$ are linearly independent, from $\dim (N_\l)_-^\bot=\infty$ it follows that there exists $f_{12}\in (N_\l)_-^\bot$ such that $f_{12},f_1,f_2$ are linearly independent. By invoking  \cite[Proposition 1.1.1]{KadisonRingrose1}, we have that $\ker f_1\cap\ker f_2\not\seq\ker f_{12}$. Then there exists $y_{12}\in\I(f_{12})\cap\ker f_1\cap\ker f_2$, and hence
\begin{align*}
h_\l(x,f_1+f_2)=&f_{12}(L(x\ot(f_1+f_2))y_{12})\\
=&f_{12}(L(x\ot f_1)y_{12})+f_{12}(L(x\ot f)y_{12})\\
=&h_\l(x,f_1)+h_\l(x,f_2).
\end{align*}
That is, $h_\l(x,f)$ is linear regarding to $f$ as well.\qedhere
\end{proof}

For an arbitrary $f\in (N_\l)_-^\bot\sm\{0\}$ and $y\in\I(f)$, with similar discussion as in Section \ref{sect.dim Xb>1}, one can define a linear operator (bounded if $L$ is continuous) $\F_{\l,f,y}\colon N_\l\to \mathcal{X}$ by
\[
\F_{\l,f,y}x:=(L(x\ot f)-h_\l(x,f)\i)y, \ \ \forall x\in N_\l.
\]
For any $z\in N_\l\cap\ker f$, the following equation
\[
(L(x\ot f)-h_\l(x,f))z=\f_{\l,x,f}(z)x
\]
establishes a bounded linear functional $\f_{\l,x,f}\colon N_\l\cap\ker f\to\mathbb{C}$. Adopting the same strategy in Section \ref{sect.dim Xb>1}, we can find a continuous extension $\f_{\l,x,f,y}\colon \mathcal{X}\to\mathbb{C}$ of $\f_{\l,x,f}$ such that $\f_{\l,x,f,y}(y)=0$.

\begin{lem}\label{xxsec3.19}
Let $\mathcal{X}$ be a Banach space and $\N$ is a non-trivial nest on $\mathcal{X}$ satisfying the condition \ref{clubsuit.dim X>2, X-=X,etc.}.
\begin{enumerate}
\item[{\rm (1)}] For any $\l\in\L$, $x\in N_\l$ and $f\in (N_\l)_-^\bot$, we have 
$$
L(x\ot f)=\F_{\l,f,y}x\ot f+x\ot \f_{\l,x,f,y}+h_\l(x,f)\i, \ \ \forall y\in\I(f). 
$$

\item[{\rm (2)}] $\f_{\l,x,f}$ is not related with the choice of $x\in N_\l$, and thus we write $\f_\l$ for $\f_{\l,x}$, and write $\f_{\l,f,y}$ for $\f_{\l,x,f,y}$.

\item[{\rm (3)}] For any $f_j\in (N_\l)_-^\bot\sm\{0\}$ and any $y_j\in\I(f_j)$ $(j=1,2)$, we have $\F_{\l,f_1,y_1}-\F_{\l,f_2,y_2}\in\mathbb{C}\i|_{N_\l}$.
\end{enumerate}
\end{lem}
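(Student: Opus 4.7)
Parts (1)--(3) mirror Lemmas \ref{xxsec3.7}--\ref{xxsec3.9} of Section \ref{sect.dim Xb>1}, and I would prove them in that order. The essential new difficulty is that we no longer have $\mathcal{X}(\A)=\mathcal{X}$ at our disposal; only the nest density $\bigvee_{\l\in\L}N_\l=\mathcal{X}$ together with the infinite-dimensionality estimates of Lemma \ref{xxsec3.17} are available as substitutes for it.

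For (1), I would evaluate the claimed operator identity at an arbitrary $z\in\mathcal{X}$, decompose $z=f(z)y+w$ with $w\in\ker f$, and use the definition $\F_{\l,f,y}x=(L(x\ot f)-h_\l(x,f)\i)y$ together with the normalization $\f_{\l,x,f,y}(y)=0$ to reduce the identity to
\[
(L(x\ot f)-h_\l(x,f)\i)w=\f_{\l,x,f,y}(w)\,x,\qquad w\in\ker f.
\]
For $w\in N_\l\cap\ker f$ this is immediate from Lemma \ref{xxsec3.18}. To extend it to arbitrary $w\in\ker f$, I would pick $\mu\in\L$ with $w\in N_\mu$ (available, by an approximation step, from $\bigvee_{\mu\in\L}N_\mu=\mathcal{X}$) and reinstall the commutator argument underlying Lemma \ref{xxsec3.18} with $w\ot g$ playing the role of $z\ot f_1$, where $g\in(N_\mu)_-^\bot$ is chosen linearly independent from $f$ and with $g(x)=0$. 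The infinite-dimensionality $\dim(N_\mu)_-^\bot=\infty$ from Lemma \ref{xxsec3.17} supplies enough room to arrange these vanishing conditions. Lemmas \ref{xxsec2.6} and \ref{xxsec2.8} then force $(L(x\ot f)-h_\l(x,f)\i)w\in\mathbb{C}x$, and the resulting scalar coefficient gives the Hahn--Banach extension $\f_{\l,x,f,y}$ of $\f_{\l,x,f}$ satisfying $\f_{\l,x,f,y}(y)=0$.

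Part (2) is then a short linearity argument parallel to Lemma \ref{xxsec3.8}: using $\dim N_\l=\infty$ to pick linearly independent $x_1,x_2\in N_\l$, apply (1) to $L(x_1\ot f)$, $L(x_2\ot f)$, and $L((x_1+x_2)\ot f)$; the linearity of $L$ together with the bilinearity of $h_\l$ from Lemma \ref{xxsec3.18} forces $\f_{\l,x_1,f}=\f_{\l,x_2,f}$ via linear independence, and dependent pairs are handled by a third comparison vector. Part (3) follows Lemma \ref{xxsec3.9}: for $f_1,f_2\in(N_\l)_-^\bot\sm\{0\}$ linearly independent, (1) applied to $L(x\ot f_1)+L(x\ot f_2)=L(x\ot(f_1+f_2))$ together with a test vector $z\in\mathcal{X}$ satisfying $f_1(z)=1$, $f_2(z)=0$ (obtained via \cite[Proposition~1.1.1]{KadisonRingrose1}) shows $(\F_{\l,f_1,y_1}-\F_{\l,f_1+f_2,y})x\in\mathbb{C}x$ for every $x\in N_\l$, and a standard scalar-operator argument converts this into $\F_{\l,f_1,y_1}-\F_{\l,f_1+f_2,y}\in\mathbb{C}\i|_{N_\l}$. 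Chaining through $\F_{\l,f_1+f_2,y}$ yields the claim, while the linearly dependent case $f_1=af_2$ reduces to comparing $\F_{\l,f_2,ay_1}$ and $\F_{\l,f_2,y_2}$ on $ay_1-y_2\in\ker f_2$, which lies in $\mathbb{C}x$ by (1) itself.

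The principal obstacle is the extension step in (1): upgrading the conclusion of Lemma \ref{xxsec3.18} from $N_\l\cap\ker f$ to all of $\ker f$. The substitute for the absent hypothesis $\mathcal{X}(\A)=\mathcal{X}$ is the careful selection of an auxiliary nest element $N_\mu$ containing the given $w$ together with a functional $g\in(N_\mu)_-^\bot$ vanishing at $x$ and linearly independent from $f$; Lemma \ref{xxsec3.17} is what guarantees enough such $g$ exist. Once (1) is in hand, parts (2) and (3) are essentially algebraic linearity arguments.
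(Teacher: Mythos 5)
The paper offers no written proof of this lemma, only the remark that it is ``parallel'' to Lemmas \ref{xxsec3.7}--\ref{xxsec3.9}; your parts (2) and (3) are faithful to that transcription (with $\dim N_\l=\infty$ and $\dim (N_\l)_-^\bot=\infty$ from Lemma \ref{xxsec3.17} supplying the independent vectors and functionals that \ref{spade.X(A)inters kerf} supplied before), and I have no objection there. You have also correctly isolated the one place where the parallel genuinely breaks down: in Lemma \ref{xxsec3.7} the functional $\f_{x,f}$ is defined on all of $\ker f$ because $\mathcal{X}(\A)=\mathcal{X}$, whereas here $\f_{\l,x,f}$ is defined only on $N_\l\cap\ker f$, so the displayed formula, read as an identity in $\mathcal{B(X)}$ (and it is so used later, e.g.\ when (3.26) is evaluated at vectors outside $N_\l$ in the proof of Lemma \ref{xxsec3.20}), requires the additional fact that $(L(x\ot f)-h_\l(x,f)\i)w\in\mathbb{C}x$ for \emph{every} $w\in\ker f$.

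Your treatment of that step, however, rests on a false assertion: you ``pick $\mu\in\L$ with $w\in N_\mu$,'' claiming this is ``available, by an approximation step, from $\bigvee_{\mu\in\L}N_\mu=\mathcal{X}$.'' It is not. The join condition says only that $\bigcup_{\mu}N_\mu$ is a \emph{dense} linear subspace of $\mathcal{X}$; under \ref{clubsuit.dim X>2, X-=X,etc.} every $N_\mu$ is a proper closed subspace and the union is in general proper (consider a nest of order type $\mathbb{Z}$ on $\ell^2(\mathbb{Z})$). For $w$ lying in no proper nest element there is no nonzero rank-one operator $w\ot g$ in $\alg\N$ at all (Lemma \ref{xxsec2.3} would force $g\in\Xb=\{0\}$), so the commutator argument cannot be ``reinstalled'' for such $w$. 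The step can be repaired, but by a different mechanism than the one you name: first run your commutator argument for $w\in N_\mu\cap\ker f$ with $N_\l\seq N_\mu$ (checking, via the well-definedness computation inside Lemma \ref{xxsec3.18}, that the scalar $g(L(x\ot f)v)$ you produce equals $h_\l(x,f)$ rather than some new constant), and then pass to arbitrary $w\in\ker f$ by observing that $\bigcup_\mu(N_\mu\cap\ker f)$ is dense in $\ker f$, that $L(x\ot f)-h_\l(x,f)\i$ is a \emph{bounded} operator (this uses only that $L$ takes values in $\mathcal{B(X)}$, not continuity of $L$), and that $\mathbb{C}x$ is closed. As written, your proposal asserts a membership that fails and never states the limiting argument that would replace it, so part (1) is not yet proved.
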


The proof of this lemma is parallel with proofs of Lemma \ref{xxsec3.7}, Lemma \ref{xxsec3.8} and Lemma \ref{xxsec3.9}, and hence we do not repeat them here.

\begin{lem}\label{xxsec3.20}
Let $\mathcal{X}$ be a Banach space and $\N$ be a non-trivial nest on $\mathcal{X}$ satisfying the condition \ref{clubsuit.dim X>2, X-=X,etc.}. If $L\colon\alg\N\to\mathcal{B(X)}$ is a Lie $n$-derivation, then for any $A\in\alg\N$ and $\l\in\L$, the equality $L(A)|_{N_\l}=D_\l(A)+H_\l(A)$ holds true, where $D_\l\colon\alg\N\to\B(N_\l, \mathcal{X})$ is a derivation given by $D_\l(A)x=\F_{\l,f,y}Ax-A\F_{\l,f,y}x\,(\forall f\in N_\l^\bot\sm\{0\}, \forall y\in\I(f), \forall x\in N_\l)$ and $H_\l\colon\alg\N\to\mathbb{C}\i|_{N_\l}$ is a linear mapping. Moreover, if $L$ is continuous, then $D_\l$ and $H_\l$ are continuous.
\end{lem}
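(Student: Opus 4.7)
The plan is to run the argument of Theorem~\ref{xxsec3.11} verbatim, but localized to the invariant subspace $N_\l\in\N$: the roles of $h$, $\F_{f,y}$, and $\f_{f,y}$ will be played by their localized counterparts $h_\l$, $\F_{\l,f,y}$, and $\f_{\l,f,y}$ from Lemmas~\ref{xxsec3.18} and \ref{xxsec3.19}. This localization is legitimate because $N_\l,(N_\l)_-\in\N$, so every $A\in\alg\N$ satisfies $AN_\l\seq N_\l$ and $A^*(N_\l)_-^\bot\seq(N_\l)_-^\bot$; by Lemma~\ref{xxsec2.3}, every rank-one operator $x\ot f$ with $x\in N_\l$ and $f\in(N_\l)_-^\bot$ lies in $\alg\N$. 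Moreover, Lemma~\ref{xxsec3.17} supplies the dimensional abundance $\dim (N_\l)_-^\bot=\dim N_\l=\dim(N_\l\cap\ker g)=\infty$ (for any $g\in \mathcal{X}^*\sm\{0\}$) that makes the linear-independence maneuvers from Theorem~\ref{xxsec3.11} available inside $N_\l$.

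\textbf{Main computation.} Fix $A\in\alg\N$, $\l\in\L$, and $x\in N_\l$. First I would select $f\in(N_\l)_-^\bot\sm\{0\}$ with $f(x)=0$ (Lemma~\ref{xxsec3.17} provides enough linearly independent functionals) and an appropriate $y\in\I(f)$ for which $y\ot f\in\alg\N$, and then set $D_\l(A)x:=[\F_{\l,f,y},A]x$; Lemma~\ref{xxsec3.19}(3) then guarantees that this value is unaffected by the choices of $f,y$. For $n>2$, I would expand $L(p_n(A,x\ot f,y\ot f,\dots,y\ot f))$ in two ways: on the one hand, collapse the inner commutators through the rank-one identities $A(x\ot f)=Ax\ot f$ and $(x\ot f)A=x\ot A^*f$ (so the whole polynomial reduces to $Ax\ot f-f(Ay)\,x\ot f-f(Ax)\,y\ot f$), then apply $L$ and invoke Lemma~\ref{xxsec3.19}(1); on the other hand, apply the Lie $n$-derivation identity directly and substitute Lemma~\ref{xxsec3.19}(1) for every occurrence of $L(x\ot f)$ and $L(y\ot f)$. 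Equating the two expressions and evaluating them at $x$ and at $y$ yields an identity of the form
\[
(L(A)-D_\l(A))x+a_{A,f,y}\,x+b_{A,f,y}\,y=0.
\]
Replacing $y$ by $y+z$ for some $z\in N_\l\cap\ker f\sm\{0\}$ (Lemma~\ref{xxsec3.17}(2)) and noting that $\F_{\l,f,y+z}-\F_{\l,f,y}\in\mathbb{C}\i|_{N_\l}$ by Lemma~\ref{xxsec3.19}(3) leaves $D_\l(A)$ unchanged, and subtracting the two identities forces $b_{A,f,y}=0$. Hence $(L(A)-D_\l(A))x=-a_{A,f,y}\,x$.

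\textbf{Independence, the case $n=2$, and continuity.} Next I would show that the scalar $a_{A,f,y}$ depends only on $A$: its independence from $y\in\I(f)$ follows by applying $f$ to the corresponding identity obtained from a shifted $y'$; its independence from $f$ is extracted by picking a second $f'\in(N_\l)_-^\bot$ and comparing the two scalars at a common nonzero $z\in N_\l\cap\ker f\cap\ker f'$, which exists by Lemma~\ref{xxsec3.17}(2). To suppress the dependence on $x$, for any $x_1,x_2\in N_\l$ I would select $f_j$ with $f_j(x_j)=0$ and $z\in N_\l\cap\ker f_1\cap\ker f_2\sm\{0\}$; the chain $H_{x_1}(A)=H_z(A)=H_{x_2}(A)$ then forces the scalar $H_\l(A):=-a_{A,f,y}$ to be well-defined, and its linearity in $A$ is inherited from that of $L$ and $D_\l$. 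The case $n=2$ is handled exactly as the corresponding portion of Theorem~\ref{xxsec3.11}: directly expand $L(p_2(A,x\ot f))$ both ways, and when $A^*f\neq 0$ introduce the auxiliary functional $\p_{A^*f}$ through Lemma~\ref{xxsec3.19}(3) applied to $A^*f\in(N_\l)_-^\bot$. Continuity of $D_\l$ and $H_\l$ in the continuous-$L$ case descends from the continuity of $\F_{\l,f,y}$, $\f_{\l,f,y}$, and $h_\l$ asserted in Lemmas~\ref{xxsec3.18} and \ref{xxsec3.19}.

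\textbf{Main obstacle.} The central difficulty is combinatorial bookkeeping: the expansion of $p_n$ through the Lie $n$-derivation identity produces a dozen or so terms with parity-dependent signs, and one has to verify at every step that the auxiliary vectors $y,z$ and functionals $f,f'$ remain in the correct local spaces so that Lemma~\ref{xxsec2.3} continues to place each intermediate rank-one operator inside $\alg\N$. Beyond this care—together with the minor subtlety of choosing $y$ so that $y\ot f\in\alg\N$, which is ensured by the nest structure since $f\in (N_\l)_-^\bot$ and $y$ can be taken in an appropriate nest element containing $N_\l$—no conceptually new input is required relative to the proof of Theorem~\ref{xxsec3.11}.
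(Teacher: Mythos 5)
Your plan to run the proof of Theorem~\ref{xxsec3.11} verbatim inside $N_\l$ overlooks the two features that make Lemma~\ref{xxsec3.20} genuinely harder. First, since $f\in N_\l^\bot$ annihilates all of $N_\l$, every $y\in\I(f)$ lies \emph{outside} $N_\l$, in some strictly larger nest element $N_\a$; hence $h_\l(\cdot,f)$ and $\f_{\l,f}$ are not defined at $y$, and there is no localized analogue of Lemma~\ref{xxsec3.10} (note that Lemma~\ref{xxsec3.19} reproduces only Lemmas~\ref{xxsec3.7}--\ref{xxsec3.9}). The paper therefore does not evaluate the two expansions ``at $x$ and at $y$'' as you propose: it replaces the last factor of the commutator by $y\ot g$ with $g\in N_\a^\bot$ and evaluates at an auxiliary $z\in\I(g)\cap\ker f$, and it kills the coefficient of $y$ not by your ``replace $y$ by $y+z$ and subtract'' maneuver but by a dimension count (the span $V_x=\lspan\{(L(A)-D_\l(A))x,Ax,x\}$ is at most three-dimensional while $y+N_\a\cap\ker f$ is an infinite-dimensional coset, so one may choose a representative $y_x\notin V_x$, and moreover $y\notin N_\l$). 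Your parenthetical remark that $y$ ``can be taken in an appropriate nest element containing $N_\l$'' so that $y\ot f\in\alg\N$ is precisely the point that forces this detour through $N_\a$ and $g$.

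Second, and more seriously, because no identity $f\circ\F_{\l,f,y}+\f_{\l,f,y}=0$ is available, the relation one actually obtains is not $(L(A)-D_\l(A))x+a_{A,f,y}x+b_{A,f,y}y=0$ but $(L(A)-D_\l(A))x=c_\l Ax+H_\l(A)x$, with an extra term $c_\l Ax$ whose coefficient is essentially a multiple of $f(\F_{\l,f,y}y)$. Eliminating this term is the heart of the paper's proof: assuming $c_\l\neq 0$ and comparing $L(p_n(A_1,\cdots,A_n))$ with $\sum_{j}p_n(A_1,\cdots,D_\l(A_j)+c_\l A_j,\cdots,A_n)$ forces $(n-1)c_\l\,p_n(A_1,\cdots,A_n)|_{N_\l}=H_\l(p_n(A_1,\cdots,A_n))|_{N_\l}$, hence $p_n|_{N_\l}\equiv 0$, which is then refuted by constructing $w\in N^\bot$ with $N\sneq N_\l$ and $u,v$ such that $p_n(u\ot w,v\ot w,\cdots,v\ot w)=u\ot w\neq 0$. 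Your proposal contains no counterpart to this step, so as written it does not establish the lemma. (Two smaller divergences: the paper disposes of $n=3$ by citing \cite{SunMa}, and the well-definedness of $H_\l(A)$ comes out of the coefficient analysis of the single identity rather than from a chain $H_{x_1}(A)=H_z(A)=H_{x_2}(A)$.)
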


\begin{rem}\label{xxsec3.21}
For any $B\in\B(N_\l, \mathcal{X})$ (or ${\rm Hom}(N_\l, \mathcal{X})$) and $A\in\alg\N$, let us define $BA\in\B(N_\l, \mathcal{X})$ (or ${\rm Hom}(N_\l, \mathcal{X})$) by $BAx=B(Ax)\,(\forall x\in N_\l)$. Then $\B(N_\l, \mathcal{X})$ (or ${\rm Hom}(N_\l, \mathcal{X})$) becomes an $(\alg\N,\alg\N)$-bimodule. For any $f_j\in N^\bot_\l\sm\{0\}$ and $y_j\in\I(F_j)$, we by Lemma \ref{xxsec3.19} assert that $[\F_{\l,f_1,y_1},A]=[\F_{\l,f_2,y_2},A]$ for all $A\in\alg\N$. Thus $D_\l\colon\alg\N\to{\rm Hom}(N_\l, \mathcal{X})$ is well-defined. In what follows we will prove the fact that $D_\l\colon\alg\N\to\B(N_\l, \mathcal{X})$.
\end{rem}

\begin{proof}
The case of $n=3$ is given in \cite{SunMa}, and hence we only consider the cases of $n>3$ and $n=2$, respectively. Let us first deal with the case of $n>3$. For an arbitrary $f\in N_\l^\bot\sm\{0\}$, it follows from Lemma \ref{xxsec3.17} that there exists $\a\in\L$ such that $N_\a\not\seq\ker f$. And hence there exists $y\in\I(f)\cap N_\a$. The fact that $\N$ is totally ordered implies that $N_\l\seq N_\a$. Thus for an arbitrary $g\in N_\a^\bot\sm\{0\}$, $f$ and $g$ are linearly independent. By \cite[Proposition 1.1.1]{KadisonRingrose1} we know that there exists $z\in\I(g)\cap\ker f$.

Given $f'\in N_\l^\bot\sm\{0\}$ and $y'\in\I(f)$, by invoking Lemma \ref{xxsec3.19} we see that $\F_{\l,f,y}-\F_{\l,f',y'}=c_{\l,f',y'}\i|_{N_\l}$, and we define $\p_{\l,f'}:=\f_{\l,f,y}+c_{\l,f',y'}\i$, where $c_{\l,f',y'}=0$ if $f'=f$, $y'=y$. Then by Lemma \ref{xxsec3.19} we get
$$
L(x\ot f')=\F_{\l,f,y} x\ot f'+x\ot \p_{\l,f'}+h_\l(x,f')\i.\eqno{(3.26)}
$$

Under the above notations, for arbitrary $x\in N_\l$, a direct computation shows that
$$
\begin{aligned}
&L(p_n(A,x\ot f,y\ot f,\cdots,y\ot f,y\ot g))\\
\notag=&L(p_{n-1}(Ax\ot f-x\ot A^*f,y\ot f,\cdots,y\ot f,y\ot g))\\
\notag=&L(Ax\ot g-f(Ay)x\ot g)\\
\notag=&\F_{\l,f,y} Ax\ot g+Ax\ot\p_{\l,g}+h_\l(Ax,g)\i-f(Ay)\F_{\l,f,y} x\ot g\\
\notag&-f(Ay)x\ot \p_{\l,g}-f(Ay)h_\l(x,g)\i.
\end{aligned}\eqno{(3.27)}
$$
On the other hand, we have 
$$
\begin{aligned}
&L(p_n(A,x\ot f,y\ot f,\cdots,y\ot f,y\ot g))\\
\notag=&p_n(L(A),x\ot f,y\ot f,\cdots,y\ot f,y\ot g)\\
\notag&+p_n(A,L(x\ot f),y\ot f,\cdots,y\ot f,y\ot g)\\
\notag&+\sum_{j=3}^{n-1}p_n(A, x\ot f,\cdots,\mathop{L(y_j\ot f)}_{\substack{\uparrow\\ j\text{-th}}},\cdots,y\ot g)\\
\notag&+p_n(A,x\ot f,y\ot f,\cdots,y\ot f,L(y\ot g)).
\end{aligned}\eqno{(3.28)}
$$
Let us calculate each term in the right side of (3.28). The first and second terms in the right side of (3.28) are
\begin{align*}
&p_n(L(A),x\ot f,y\ot f,\cdots,y\ot f,y\ot g)\\
=&L(A)x\ot g-f(L(A)y)x\ot g-f(L(A)x)y\ot g-g(L(A)x)y\ot f
\end{align*}
and
\begin{align*}
&p_n(A,L(x\ot f),y\ot f,\cdots,y\ot f,y\ot g)\\
=&A\F_{\l,f,y} x\ot g-f(Ay)\F_{\l,f,y} x\ot g-f(A\F_{\l,f,y} x)y\ot g+f(\F_{\l,f,y} x)f(Ay)y\ot g\\
&-g(A\F_{\l,f,y} x)y\ot f+f(Ay)g(\F_{\l,f,y} x)y\ot f-\f_{\l,f,y}(Ay)x\ot g.
\end{align*}
Those terms in $\sum$ for the case of $3\leqslant j\leqslant n-1$ are
\begin{align*}
&p_n(A,x\ot f,\cdots,\mathop{L(y\ot f)}_{\substack{\uparrow\\ j\text{-th}}},\cdots,y\ot g)\\
=&\left\{
\begin{array}{c l}
j=n-1,&f(\F_{\l,f,y} y)Ax\ot g-f(\F_{\l,f,y} y)f(Ay)x\ot g-\f_{\l,f,y}(Ax)y\ot g\\
&+\f_{\l,f,y}(x)f(Ay)y\ot g\\
 & \\
3<j<n-1,&f(\F_{\l,f,y} y)Ax\ot g-f(\F_{\l,f,y} y)f(Ay)x\ot g\\
 & \\
j=3,&\left\{
\begin{array}{c l}
n=4,&f(\F_{\l,f,y} y)Ax\ot g-f(A\F_{\l,f,y} y)x\ot g\\
&-\f_{\l,f,y}(Ax)y\ot g+\f_{\l,f,y}(x)f(Ay)y\ot g\\
 & \\
n>4,&\left\{
\begin{array}{c l}
n\text{ is odd},&f(\F_{\l,f,y} y)Ax\ot g\\
&-f(A\F_{\l,f,y} y)x\ot g\\
& \\
n\text{ is even},&2\f_{\l,f,y}(x)f(Ay)y\ot g\\
&+f(\F_{\l,f,y} y)Ax\ot g\\
&-f(A\F_{\l,f,y} y)x\ot g
\end{array}\right.
\end{array}\right.
\end{array}\right.
\end{align*}
The last term is
\begin{align*}
&p_n(A,x\ot f,y\ot f,\cdots,y\ot f,L(y\ot g))\\
=&p_n(A,x\ot f,y\ot f,\cdots,y\ot f,\F_{\l,f,y} y\ot g+y\ot \p_{\lg})\\
=&p_2(Ax\ot f-f(Ay)x\ot f-f(Ax)y\ot f,\F_{\l,f,y} y\ot g+y\ot \p_{\l,g})\\
=&f(\F_{\l,f,y} y)Ax\ot g-f(\F_{\l,f,y} y)f(Ay)x\ot g-f(Ax)f(\F_{\l,f,y} y)y\ot g+Ax\ot \p_{\l,g}\\
&-f(Ay)x\ot \p_{\l,g}-\p_{\l,g}(Ax)y\ot f+f(Ay)\p_{\l,g}(x)y\ot f.
\end{align*}

Comparing (3.27) with (3.28), noticing that $\i$ has infinitely rank, we have that $h_\l(Ax,g)-f(Ay)h_\l(x,g)=0$. Combining (3.27) with (3.28) and considering their actions to $z$, we obtain
$$
\begin{aligned}
0=&L(A)x+A\F_{\l,f,y} x-\F_{\l,f,y} Ax-f(L(A)y)x-f(L(A)x)y\\
\notag&-f(A\F_{\l,f,y} x)y+f(\F_{\l,f,y} x)f(Ay)y+f(\F_{\l,f,y} y)Ax-f(\F_{\l,f,y} y)f(Ay)x\\
\notag&-f(Ax)f(\F_{\l,f,y} y)y+(n-4)(f(\F_{\l,f,y} y)Ax-f(\F_{\l,f,y} y)f(Ay)x)\\
\notag&+f(\F_{\l,f,y} y)Ax-f(\F_{\l,f,y} y)f(Ay)x+\f_{\l,f,y}(x)f(Ay)y-\f_{\l,f,y}(Ax)y\\
\notag&+\chi_{(4,\infty)}(n)\cdot\left\{
\begin{array}{c l}
n\text{ is odd},&f(\F_{\l,f,y} y)Ax-f(A\F_{\l,f,y} y)x\\
n\text{ is even},&2\f_{\l,f,y}(x)f(Ay)y+f(\F_{\l,f,y} y)Ax-f(A\F_{\l,f,y} y)x
\end{array},\right.
\end{aligned}\eqno{(3.29)}
$$
where $\chi_{(4,\infty)}(n)=\left\{
\begin{smallmatrix}
1,&\text{if }n>4\\
0,&\text{if }n\leqslant 4
\end{smallmatrix}\right.$. Note that the subspace $V_x:=\lspan\{(L(A)+A\F_{\l,f,y}-\F_{\l,f,y}A)x,Ax,x\}=\lspan\{(L(A)-D_\l(A))x,Ax,x\}$ has nothing relation with the choice of $f, y$. Its dimension is no larger than $3$. Since $y+N_\a\cap\ker f$ is a coset of a infinite-dimensional subspace, $y+N_\a\cap\ker f\not\seq V_x$. Let us pick $y_x\in(y+N_\a\cap\ker f)\sm V_x$. And then $y_x\in\I(f)$. Henceforth (3.29) holds true for $y_x$ and $\F_{\l,f,y_x}$. The fact that $y_x\notin V_x$ implies that the coefficient of $y_x$ in equality (3.29) is $0$, when we substitute $y_x$ for $y$. We therefore have
\[
(L(A)-[\F_{\l,f,y},A])x=(L(A)-[\F_{\l,f,y_x},A])x\in\lspan\{Ax,x\}\seq N_\l.
\]
It follows from $y\notin N_\l$ that the coefficient of $y$ in (3.29) is $0$ (otherwise $y\in\lspan\{(L(A)-[\F_{\l,f,y},A])x,Ax,x\}$ $\seq N_\l$, a contradiction). Observing the coefficients of $Ax$ and $x$ in (3.29), we can find that
\[
(L(A)-D_\l(A))x=(L(A)-[\F_{\l,f,y},A])x=c_\l Ax+H_\l(A)x,
\]
where $c_\l\in\mathbb{C}$ is merely related with $\l,f,y$ and is not related with the choice of $x$, and $H_\l(A)\in\mathbb{C}\i$ is only related with $\l,A,f,y$ and is nothing relation with the choice of $x$.

Let us next show that $c_\l=0$. We use reduction to absurdity. Suppose that $c_\l\neq 0$. For any $A_1,\cdots,A_n\in\alg\N$ and an arbitrary $x\in N_\l$, by what has been proved we have
\begin{align*}
&D_\l(p_n(A_1,\cdots,A_n))x+c_\l p_n(A_1,\cdots,A_n)x+H_\l(p_n(A_1,\cdots,A_n))x\\
=&L(p_n(A_1,\cdots,A_n))x\\
=&\sum_{j=1}^np_n(A_1,\cdots,D_\l(A_j)+c_\l A_j,\cdots,A_n)x\\
=&D_\l(p_n(A_1,\cdots,A_n))x+n\cdot c_\l p_n(A_1,\cdots,A_n)x.
\end{align*}
Here, the last equality is due to the fact that $D_\l$ is a derivation and hence a Lie $n$-derivation (Proposition \ref{xxsec2.4}). this shows that
\[
(n-1)\cdot c_\l p_n(A_1,\cdots,A_n)x=H_\l(p_n(A_1,\cdots,A_n))x.
\]
Restricting the above equation to $N_\l$, by Lemma \ref{xxsec2.6} we obtain that 
\[
\frac{1}{(n-1)c_\l}H_\l(p_n(A_1,\cdots,A_n))=0.
\]
That is, $p_n(A_1,\cdots,A_n)|_{N_\l}=0$, which is obviously absurd. Indeed, for an arbitrary $N\in\N\sm\{\{0\}\}$ satisfying $N\sneq N_\l$ and $v\in N_\l\sm N$, we define a bounded linear functional $\wt w_0$ on the subspace $\lspan\{v+N\}$ of $\mathcal{X}/N$ by $\wt w_0(av+N):=a$. By Hahn-Banach theorem, we get an extension $\wt w\in(\mathcal{X}/N)^*$. Let $w=\wt w\circ \tau$, where $\tau\colon \mathcal{X}\to \mathcal{X}/N$ is the quotient mapping. Then $w\in N^\bot$ and $w(v)=1$. For any $u\in N\sm\{0\}$, we have $u\ot w=p_n(u\ot w,v\ot w,\cdots, v\ot w)=0$, while $u\ot w(v)=u\neq 0$, a contradiction.

Let us now consider the case of $n=2$. For any $f\in N_\l^\bot\sm\{0\}$, $y\in\I(f)$ and an arbitrary $x\in N_\l$, we get
$$
\begin{aligned}
L(p_2(A,x\ot f))=&L(Ax\ot f-x\ot A^*f)\\
=&\F_{\l,f,y}Ax\ot f+Ax\ot \f_{\l,f,y}+h_\l(Ax,f)\i\\
&-\F_{\l,f,y}x\ot A^*f-x\ot\p_{\l,A^*f}-h_\l(x,A^*f)\i
\end{aligned}\eqno{(3.30)}
$$
and
$$
\begin{aligned}
L(p_2(A,x\ot f))=&p_2(L(A),x\ot f)+p_2(A,L(x\ot f))\\
=&L(A)x\ot f-x\ot L(A)^*f+A\F_{\l,f,y}x\ot f+Ax\ot \f_{\l,f,y}\\
&-\F_{\l,f,y}x\ot A^*f-x\ot A^*\f_{\l,f,y}.
\end{aligned}\eqno{(3.31)}
$$
Combining (3.30) with (3.31) gives
$$
\begin{aligned}
(L(A)+A\F_{\l,f,y}-\F_{\l,f,y}A)x\ot f=&x\ot L(A)^*f+x\ot A^*\f_{\l,f,y}\\
\notag&+h_\l(Ax,f)\i-x\ot \p_{\l,A^*f}-h_\l(x,A^*f)\i.
\end{aligned}\eqno{(3.32)}
$$
Since $\i$ is of infinite rank, we arrive at
\[
h_\l(Ax,f)-h_\l(x,A^*f)=0.
\]
Considering the action of (3.32) on $y$, we finish the proof of this case.

Finally, the equality $L(\cdot)|_{N_\l}=D_\l(\cdot)+H_\l(\cdot)$ entails that $D_\l(\cdot)=L(\cdot)|_{N_\l}-H_\l(\cdot)\colon\alg\N\to\B(N_\l,\mathcal{X})$. If $L$ is continuous, then $\F_{\l,f,y}$ is continuous and hence $D_\l$ is bounded. By the relation $L(\cdot)|_{N_\l}=D_\l(\cdot)+H_\l(\cdot)$, we get the continuity of $H_\l$.
\end{proof}

\begin{theorem}\label{xxsec3.22}
Let $\mathcal{X}$ be a Banach space with $\dim\Xb=0$ and $\N$ be a non-trivial nest on $\mathcal{X}$. Then each Lie-type derivation on ${\rm Alg}\N$ is of standard form. More precisely, for any Lie $n$-derivation $L\colon{\rm Alg}\N\to\mathcal{B(X)}$, there exists a derivation $D\colon{\rm Alg}\N\to\mathcal{B(X)}$ and a linear mapping $H\colon{\rm Alg}\N\to\mathbb{C}I$ vanishing on all $(n-1)$-commutators on ${\rm Alg}\N$ such that $L=D+H$. In particular, if $L$ is continuous, then $D$ and $H$ are continuous as well.
\end{theorem}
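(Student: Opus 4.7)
The plan is to reduce to the remaining case $(\clubsuit)$ and then glue together the local decompositions produced by Lemma~\ref{xxsec3.20}. If $\{0\}\sneq\{0\}_+$, Corollary~\ref{xxsec3.16} already yields the conclusion, so we may assume $\dim\mathcal{X}>2$, $\mathcal{X}_-=\mathcal{X}$ and $\{0\}=\{0\}_+$. By Lemma~\ref{xxsec3.17} there is an increasing net $\{N_\l\}_{\l\in\L}\seq\N\sm\{\{0\},\mathcal{X}\}$ with $\bigvee_{\l\in\L}N_\l=\mathcal{X}$, and for every $\l\in\L$ Lemma~\ref{xxsec3.20} supplies a derivation $D_\l\colon\alg\N\to\B(N_\l,\mathcal{X})$ of the form $D_\l(A)=[\F_{\l,f,y},A]|_{N_\l}$, together with a linear map $H_\l\colon\alg\N\to\mathbb{C}\i|_{N_\l}$, such that $L(A)|_{N_\l}=D_\l(A)+H_\l(A)$.

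The decisive step is to show that these local data are compatible along the net. For $\l,\mu\in\L$ with $N_\l\seq N_\mu$, pick any $f\in N_\mu^\bot\sm\{0\}$ and $y\in \I(f)$. Using the formula in Lemma~\ref{xxsec3.18}, I compute $h_\mu(x,f)$ and $h_\l(x,f)$ as $f_1(L(x\ot f)y_1)$ for the same choice of $f_1\in N_\mu^\bot\seq N_\l^\bot$ (linearly independent from $f$) and $y_1\in\I(f_1)\cap\ker f$; such an $f_1$ exists because $\dim N_\mu^\bot=\infty$ by Lemma~\ref{xxsec3.17}. Hence $h_\mu(x,f)=h_\l(x,f)$ for $x\in N_\l$, forcing $\F_{\mu,f,y}|_{N_\l}=\F_{\l,f,y}$. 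Since every $A\in\alg\N$ satisfies $AN_\l\seq N_\l$, the commutators coincide: $D_\mu(A)|_{N_\l}=D_\l(A)$, and therefore $H_\mu(A)|_{N_\l}=H_\l(A)$. Writing $H_\l(A)=h_\l(A)\i|_{N_\l}$ with $h_\l(A)\in\mathbb{C}$, the totality of the nest then yields $h_\l(A)=h_\mu(A)$ for all $\l,\mu\in\L$; denote the common value by $h(A)$.

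Now set $H(A):=h(A)\i\in\mathbb{C}\i$ and $D:=L-H$. Both $H$ and $D$ are linear, $D(A)\in\mathcal{B(X)}$ automatically, and by construction $D(A)|_{N_\l}=D_\l(A)$ for every $\l$. For any $A,B\in\alg\N$, the derivation identity for each $D_\l$ rephrases as $D(AB)|_{N_\l}=(D(A)B+AD(B))|_{N_\l}$, so the bounded operator $D(AB)-D(A)B-AD(B)\in\mathcal{B(X)}$ vanishes on the dense subspace $\lspan\bigcup_{\l\in\L}N_\l$ and is therefore zero on $\mathcal{X}$. Hence $D$ is a derivation, and Proposition~\ref{xxsec2.5} guarantees that $H$ annihilates every $(n-1)$-th commutator. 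When $L$ is continuous, Lemma~\ref{xxsec3.20} gives continuity of each $H_\l$ and $D_\l$, and the identity $|h(A)|=\|H_\l(A)\|$ supplies continuity of $H$, hence of $D=L-H$. The main obstacle is the compatibility argument: one must pin down that the locally defined derivations $D_\l$ and scalars $h_\l$ patch together coherently, which hinges on the coordinate-free description of $h_\l(x,f)$ together with the fact that the invariance $AN_\l\seq N_\l$ transfers to the commutator operators $[\F_{\l,f,y},A]$.
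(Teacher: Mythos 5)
Your proposal is correct and follows essentially the same route as the paper: reduce to the case $(\clubsuit)$, extract the local decompositions $L(A)|_{N_\l}=D_\l(A)+H_\l(A)$ from Lemma~\ref{xxsec3.20}, verify compatibility along the totally ordered net, and glue via the density of $\bigcup_{\l}N_\l$, invoking Proposition~\ref{xxsec2.5} at the end. The only (harmless) deviation is in the compatibility step: you obtain $h_\mu(x,f)=h_\l(x,f)$ directly from the choice-independent formula $f_1(L(x\ot f)y_1)$ of Lemma~\ref{xxsec3.18}, whence $\F_{\mu,f,y}|_{N_\l}=\F_{\l,f,y}$ on the nose, whereas the paper compares the two rank-one expansions of $L(x\ot f)$ and uses the infinite rank of $\i$ to reach the same conclusion up to a scalar.
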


\begin{proof}
As we mention at the beginning of this subsection, it suffices to consider the case of $\dim X>2$, $\Xm=\mathcal{X}$ and $\{0\}=\{0\}_+$. For arbitrary $\a,\b\in\L$, suppose that $N_\a\seq N_\b$. For any $x\in N_\a$, fixing $f\in N_\b^\bot\sm\{0\}$ and $y\in\I(f)$, then by relation (3.26) we know that the equality 
$$
(\F_{\a,f_1,y_1}-\F_{\b,f_2,y_2})x\ot f=x\ot(\p_{\b,f_2}-\p_{\a,f_1})+(h_\b(x,f)-h_\a(x,f))\i
\eqno{(3.33)}
$$
holds true for any $f_1\in N_\a^\bot$, $f_2\in N_\b^\bot$ and $y_j\in\I(f_j)$. Note that $\i$ has infinitely rank, and thus $h_\b(x,f)-h_\a(x,f)=0$. Considering the action of (3.33) on $y$, we obtain $\F_{\a,f_1,y_1}-\F_{\b,f_2,y_2}|_{N_\a}\in\mathbb{C}\i|_{N_\a}$. By Lemma \ref{xxsec3.20} we conclude that
\[
0=[\F_{\a,f_1,y_1}-\F_{\b,f_2,y_2}|_{N_\a},A]x=(H_\b(A)-H_\a(A))x,\ \ \forall A\in\alg\N. 
\]
And hence $H_\a(A)=H_\b(A)|_{N_\a}$. For each $\l\in\L$, let us denote $\wt H_\l(A)\i|_{N_\l}=H_\l(A)$. By Lemma \ref{xxsec3.20} it follows that $\wt H_\l\colon\alg\N\to\mathbb{C}I$ is a (continuous, if $L$ is continuous) linear functional, and $\wt H_\a(A)=\wt H_\b(A)$. Define $H(A):=\wt H_\l(A)\i$ for some $\l\in\L$ and write $D:=L-H$. Thus $H\colon\alg\N\to\mathbb{C}\i$ is a (bounded, if $L$ is continuous) linear operator. For any $A,B\in\alg\N$ and $x\in \bigcup_{\l\in\L}N_\l$, there is a $\l\in\L$ such that $x\in N_\l$. By Lemma \ref{xxsec3.20} it follows that
\begin{align*}
D(AB)x=&(L-H)(AB)x\\
=&D_\l(AB)x\\
=&D_\l(A)Bx+AD_\l(B)x\\
=&(L(A)-\wt H_\l(A)\i|_{N_\l})Bx+A(L(B)-\wt H_\l(B)\i|_{N_\l})x\\
=&(L(A)-\wt H_\l(A)\i)Bx+A(L(B)-\wt H_\l(B)\i)x\\
=&\big(D(A)B+AD(B)\big)x.
\end{align*}
Taking into account the fact  $\mathcal{X}=\ol{\lspan\bigcup_{\l\in\L}N_\l}=\ol{\bigcup_{\l\in\L}N_\l}$ (Since $\N$ is totally ordered, $\bigcup_{\l\in\L}N_\l$ is a linear space), we know that $D\colon\alg\N\to\mathcal{B(X)}$ is a (continuous, if $L$ is continuous) derivation. By invoking Proposition \ref{xxsec2.4} we see that  $H$ vanishes on all $(n-1)$-th commutators on $\alg\N$.
\end{proof}

\begin{rem}\label{xxsec3.23}
We would like to point out that the main results in \cite{Lu, SunMa, ZhangWuCao} are direct consequences of our main Theorem \ref{xxsec3.1}.
\end{rem}


\begin{thebibliography}{10}

\bibitem[1]{Abdullaev}
I. Z. Abdullaev,
{\em $n$-Lie derivations on von
Neumann algebras}, Uzbek. Mat. Zh., \textbf{5-6} (1992), 3-9.










\bibitem[2]{BaiDu} Z.-F. Bai and S.-P. Du,
{\em The structure of nonlinear Lie
derivation on von Neumann algebras}, Linear Algebra Appl.,
\textbf{436} (2012), 2701-2708.



\bibitem[3]{Cheung} 
W.-S. Cheung, {\em Mappings on triangular algebras},
Ph.D. dissertation, University of Victoria, 2000.



\bibitem[4]{Christensen} 
E. Christensen, 
{\em Derivations of nest algebras},
Math. Ann., \textbf{229} (1977), 155-161.



\bibitem[5]{Davidson} 
K. R. Davidson, 
{\em Nest algebras}, Pitman
Research Notes in Mathematics Series, \textbf{191}, Longman,
London/New York, 1988.



\bibitem[6]{DonsigForrestMarcoux}
A. P. Donsig, B. E. Forrest and L. W. Marcoux, 
{\em On derivations of semi-nest algebras}. Houston
J. Math., \textbf{22} (1996), 375-398.



\bibitem[7]{ForrestMarcoux} 
B. E. Forrest and L. W. Marcoux,
{\em Derivations of triangular Banach algebras}, Indiana Univ. Math.
J., \textbf{45} (1996), 441-462.




\bibitem[8]{FosnerWeiXiao} 
A. Fo\v sner, F. Wei and Z.-K. Xiao, 
{\em Nonlinear Lie-type derivations of von Neumann algebras and related topics}, Colloq. Math., \textbf{132} (2013), 53-71.



\bibitem[9]{Halmos} 
P. R. Halmos,  
{\em A Hilbert space problem book}, Second edition. Graduate Texts in Mathematics, \textbf{19}. Encyclopedia of Mathematics and its Applications, 17. Springer-Verlag, New York-Berlin, 1982. xvii+369 pp.



\bibitem[10]{Johnson} B. E. Johnson, {\em Symmetric amenability
and the nonexistence of Lie and Jordan derivations}, Math. Proc.
Camb. Phil. Soc., \textbf{120} (1996), 455-473.




\bibitem[11]{KadisonRingrose1} 
R. V. Kadison and J. R. Ringrose, 
{\em Fundamentals of the Theory of Operator Algebras}. Vol. I. Elementary theory. Reprint of the 1983 original. Graduate Studies in Mathematics, \textbf{15}. American Mathematical Society, Providence, RI, 1997. xvi+398 pp. .



\bibitem[12]{KadisonRingrose2} 
R. V. Kadison and J. R. Ringrose, 
{\em Fundamentals of the theory of operator algebras}. Vol. II. Advanced theory. Corrected reprint of the 1986 original. Graduate Studies in Mathematics, \textbf{16}. American Mathematical Society, Providence, RI, 1997. pp. i-xxii and 399-1074.





\bibitem[13]{Lu} 
F.-Y. Lu, 
{\em Lie triple derivations on nest algebras},
Math. Nachr., \textbf{280} (2007), 882-887.




\bibitem[14]{MarcouxSourour} 
L. W. Marcoux and A. R. Sourour, {\em Lie
isomorphisms of nest algebras}, J. Funct. Anal., \textbf{164}
(1999), 163-180.




\bibitem[15]{MathieuVillena} 
M. Mathieu and A. R. Villena, 
{\em The structure of Lie derivations on $C^\ast$-algebras}, 
J. Funct. Anal., \textbf{202} (2003), 504-525. 








\bibitem[16]{Miers2}
C. R. Miers, 
{\em Lie derivations of von Neumann algebras}, Duke Math. J., 
\textbf{40} (1973), 403-409.





\bibitem[17]{Miers3}
C. R. Miers, 
{\em Lie triple derivations of von Neumann algebras}, Proc. Amer. Math.
Soc., \textbf{71} (1978), 57-61.




\bibitem[18]{Qi} 
X.-F. Qi, 
{\em Characterizing Lie $n$-derivations for reflexive algebras}, Linear Multilinear Algebra, \textbf{63} (2015), 1693-1706. 




\bibitem[19]{QiHou} 
X.-F. Qi and J.-C. Hou, 
{\em Additive Lie {\rm (}$\xi$-Lie{\rm )} derivations and generalized Lie {\rm
(}$\xi$-Lie{\rm )} derivations on nest algebras}, Linear Algebra
Appl., \textbf{431} (2009), 843-854.




\bibitem[20]{QiHouDeng} 
X.-F. Qi, J.-C. Hou and J. Deng, 
{\em Lie ring isomorphisms between nest algebras on Banach spaces}, 
J. Funct. Anal., \textbf{266} (2014), 4266-4292.




\bibitem[21]{Ringrose1} 
J. R. Ringrose,  
{\em On some algebras of operators}, Proc. London Math. Soc., 
\textbf{15} (1965), 61-83.



\bibitem[22]{Ringrose2} 
J. R. Ringrose,  
{\em On some algebras of operators. II.}, Proc. London Math. Soc., 
\textbf{16} (1966), 385-402.




\bibitem[23]{Spivack} 
M. Spivack,  
{\em Derivations and nest algebras on Banach space}, Israel J. Math., \textbf{50} (1985), 193-200.



\bibitem[24]{SunMa} 
S.-L. Sun and X.-F. Ma, 
{\em Lie triple derivations of nest algebras on Banach spaces}, Linear Algebra Appl., \textbf{436} (2012), 3443-3462. 






\bibitem[25]{WangLu} 
T. Wang and F.-Y. Lu, 
{\em Lie isomorphisms of nest algebras on Banach spaces}, J. Math. Anal. Appl., \textbf{391} (2012), 582-594. 




\bibitem[26]{XiaoWei} 
Z.-K. Xiao and F. Wei, 
{\em Nonlinear Lie-type derivations on full matrix algebras}, Monatsh. Math., \textbf{170}  (2013), 77-88.




\bibitem[27]{ZhangWuCao} J.-H. Zhang, B.-W. Wu  and H.-X Cao,
{\em Lie triple derivations of nest algebras}, Linear Algebra Appl.,
\textbf{416} (2006), 559-567.

\end{thebibliography}
\end{document}